\numberwithin{equation}{section}
\tikzstyle{vertex}=[draw=black,circle,fill=black,minimum size=4pt, inner sep=0pt, outer sep=0pt,text=white,line width=0mm]
\tikzstyle{c0}=[shape=circle, minimum size=4pt, fill=white]
\tikzstyle{c1}=[shape=rectangle, minimum size=7pt, fill=red]
\tikzstyle{c2}=[shape=diamond, minimum size=10pt, fill=blue]
\theoremstyle{plain}
\newtheorem{Th}{Theorem}[section]
\newtheorem{Lemma}[Th]{Lemma}
\newtheorem{Cor}[Th]{Corollary}
 \theoremstyle{definition}
\newtheorem{Def}[Th]{Definition}
\newtheorem{Rem}[Th]{Remark}
\newtheorem{?}[Th]{Problem}
\newcommand{\h}{\textbf{h}}
\begin{document}

\title{Extremal regular graphs: the case of the infinite regular tree}

\author[P. Csikv\'ari]{P\'{e}ter Csikv\'{a}ri}

\address{MTA-ELTE Geometric and Algebraic Combinatorics Research Group \\ P\'{a}zm\'{a}ny P\'{e}ter s\'{e}t\'{a}ny 1/C \\ Hungary \& E\"{o}tv\"{o}s Lor\'{a}nd University \\ Mathematics Institute, Department of Computer 
Science \\ H-1117 Budapest
\\ P\'{a}zm\'{a}ny P\'{e}ter s\'{e}t\'{a}ny 1/C \\ Hungary \& Alfr\'ed R\'enyi Institute of Mathematics \\ H-1053 Budapest \\ Re\'altanoda utca 13-15} 

\email{peter.csikvari@gmail.com}

\thanks{The author  is partially supported by the Hungarian National Research, Development and Innovation Office, NKFIH grant K109684 and NN114614,  a Slovenian-Hungarian grant, by the MTA R\'enyi "Lend\"ulet" Groups and Graphs Research Group, and by the ERC Consolidator Grant  648017.}

 \subjclass[2010]{Primary: 05C35. Secondary: 05C31, 05C70, 05C80}

 \keywords{graph homomorphisms, large girth graphs, 2-lift} 

\begin{abstract} In this paper we study the following problem. Let $A$ be a fixed graph, and let $\hom(G,A)$ denote the number of homomorphisms from a graph $G$ to $A$. Furthermore, let $v(G)$ denote the number of vertices of $G$, and let $\mathcal{G}_d$ denote the family of $d$--regular graphs. The general problem studied in this paper is to determine 
$$\inf_{G\in \mathcal{G}_d}\hom(G,A)^{1/v(G)}.$$
It turns out that in many instances the infimum is not achieved by a finite graph, but a sequence of graphs with girth (i. e., length of the shortest cycle) tending to infinity. In other words, the optimization problem is solved by the infinite $d$--regular tree.

We prove this type of  results for the number of independent sets of bipartite graphs, evaluations of the Tutte-polynomial, Widom-Rowlinson configurations, and many more graph parameters. Our main tool will be a transformation called $2$-lift.
\end{abstract}

\maketitle

\section{Introduction} 

Let $P(G)$ be a graph parameter specified later which has size roughly $c^{v(G)}$, where $v(G)$ denotes the number of vertices of a graph $G$. For instance, the number of spanning trees, number of (perfect) matchings, 
number of independent sets or the number of homomorphisms into a fixed graph $A$. It is a general problem in extremal graph theory to study
\begin{align}
\sup P(G)^{1/v(G)}\ \ \ \ \mbox{and}\ \ \ \ \inf P(G)^{1/v(G)}
\end{align}
where the supremum and infimum are taken among all $d$--regular (bipartite) graphs. Let $\mathcal{G}_d$ denote the family of $d$--regular graphs, and similarly let $\mathcal{G}^b_d$ denote the family of $d$--regular bipartite graphs. 

It turns out that the answer often (but far from always) involves one of the following three graphs: the complete graph $K_{d+1}$, the complete bipartite graph $K_{d,d}$, and the infinite $d$--regular tree $\mathbb{T}_d$. Here the meaning of the first two cases is clear, and subsequently we will explain what it means that the infinite $d$--regular tree $\mathbb{T}_d$ is an extremal graph. 

Below we give some examples for all cases. J. Kahn \cite{Kahn} showed that if one considers the number of independent sets $I(G)$, then
\begin{align}
\sup_{G\in \mathcal{G}^b_d}I(G)^{1/v(G)}=I(K_{d,d})^{1/v(K_{d,d})}.
\end{align}
In other words, for any $d$--regular bipartite graph $G$ we have 
\begin{align}
I(G)^{1/v(G)}\leq I(K_{d,d})^{1/v(K_{d,d})}.
\end{align}
It turns out one can drop the condition of bipartiteness in J. Kahn's theorem. Y. Zhao \cite{Zhao1} used a clever trick to reduce the general case to the bipartite case. He compared $G$ with $G\times K_2$ which is defined as follows: its vertex set is $V(G)\times \{0,1\}$, and for $u,v\in V(G)$ the vertices $(u,i),(v,j)\in V(G)\times \{0,1\}$ form an edge of $G\times K_2$ if and only if $(u,v)\in E(G)$ and $i+j=1$. Note that if $G$ is a $d$--regular graph then $G\times K_2$ is also $d$--regular, and in addition, it is bipartite too. Later we will introduce the concept of $2$-lift and we will see that $G\times K_2$ and $G\cup G$ are both $2$-lifts of $G$.

\begin{Th}[Y. Zhao \cite{Zhao1}] For any graph $G$, we have
\begin{align}
I(G\times K_2)\geq I(G)^2.
\end{align}
Consequently,  we have
\begin{align}
I(G)^{1/v(G)}\leq I(G\times K_2)^{1/v(G\times K_2)}\leq I(K_{d,d})^{1/v(K_{d,d})},
\end{align}
where the second inequality follows from J. Kahn's result.
\end{Th}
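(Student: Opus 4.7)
The main task is to prove the inequality $I(G \times K_2) \ge I(G)^2$; the displayed chain is then immediate since $v(G \times K_2) = 2v(G)$, and since $G \times K_2$ is bipartite and $d$-regular whenever $G$ is $d$-regular, so J.~Kahn's theorem applies to it. I would build an injection $\Phi \colon \mathcal{I}(G) \times \mathcal{I}(G) \to \mathcal{I}(G \times K_2)$, where $\mathcal{I}(H)$ denotes the collection of independent sets of $H$. The natural parametrization is to identify an independent set $S$ of $G \times K_2$ with the pair $(A,B)$, where $A = \{v \in V(G) : (v,0) \in S\}$ and $B = \{v \in V(G) : (v,1) \in S\}$: then $S$ is independent if and only if no edge of $G$ has one endpoint in $A$ and the other in $B$, a property I will call \emph{cross-independence}.

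Fix an arbitrary total order on $V(G)$. Given an ordered pair $(I,J)$ of independent sets of $G$, I define $\Phi(I,J) = (A,B)$ by placing each $v \in I \cap J$ into both $A$ and $B$; each $v \notin I \cup J$ into neither; and, for each connected component $C$ of the subgraph $G[I \triangle J]$, placing the entire component $C$ into $A$ alone if $\min(C) \in I$, and into $B$ alone if $\min(C) \in J$. The key structural fact is that $G[I \triangle J]$ is bipartite with parts $I \setminus J$ and $J \setminus I$, because any edge with both endpoints on the same side would contradict independence of $I$ or $J$. A case analysis on an edge $uv \in E(G)$ shows that $(A,B)$ is cross-independent: if $u$ and $v$ both lie in $I \triangle J$ then they share a component of $G[I \triangle J]$ and hence lie on the same side of $(A,B)$; every remaining case either places one of $u, v$ in $I \cap J$ and the other in $I$ or $J$ (contradicting independence of $I$ or $J$), or places one endpoint outside $A \cup B$.

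For injectivity I reconstruct $(I,J)$ from $(A,B)$. One immediately reads off $I \cap J = A \cap B$ and $I \triangle J = A \triangle B$, so the vertex set and the edges of each connected component $C$ of $G[I \triangle J]$ are recovered. The inclusion $C \subseteq A$ or $C \subseteq B$ records whether $\min(C)$ lay in $I$ or in $J$. Because $C$ is connected and bipartite, its two-colouring is unique, so the part containing $\min(C)$ is pinned down, and this part equals $C \cap I$ (respectively $C \cap J$) — enough to recover $I$ and $J$. The step I expect to be the main obstacle is precisely this injectivity: naive variants — for example, keeping $I \setminus J$ in layer $0$ and $J \setminus I$ in layer $1$, or always pushing a component into the larger of $I$ and $J$ — either leave cross-edges between $A$ and $B$, or collapse $(I,J)$ and $(J,I)$ to a common image. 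The canonical tag on each component (here, the minimum vertex) is the ingredient that simultaneously defines $\Phi$ unambiguously and encodes just enough information to undo it.
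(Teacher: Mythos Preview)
Your proof is correct; it is essentially Zhao's original bipartite-swapping injection. The paper does not re-prove this statement directly but instead establishes the more general fact (Theorem~\ref{independent}) that $i_k(H)\le i_k(G\times K_2)$ for \emph{every} $2$-lift $H$ of $G$, recovering Zhao's inequality as the special case $H=G\cup G$ and summing over $k$. The paper's argument is a counting version of yours: project an independent set of a $2$-lift down to $G$ to obtain a configuration $R$ of single and double vertices, and show that each such $R$ has exactly $2^{k(R)}$ preimages in $G\times K_2$ but at most $2^{k(R)}$ in any other $2$-lift, where $k(R)$ is the number of connected components of $G$ induced on the single vertices of $R$. The structural core --- that the connected components of $G[I\triangle J]$ can be flipped independently --- is identical in both proofs; the paper's counting formulation buys the extension to all $2$-lifts and the size-by-size refinement for free, whereas your explicit injection (with the $\min$-vertex tag making it reversible) gives a clean self-contained treatment of the special case $H=G\cup G$.
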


It turns out that the number of independent sets is a special instance of a larger class of graph parameters, namely the number of homomorphisms into a fixed graph $A$. Recall that if $G$ and $A$ are graphs then a map $\varphi: V(G)\to V(A)$ is a homomorphism if $(\varphi(u),\varphi(v))\in E(A)$ whenever $(u,v)\in E(G)$.  Let $\hom(G,A)$ denote the number of homomorphisms from the graph $G$ to the graph $A$. Note that if $A_{\mathrm{ind}}$ is an edge with a loop at one of its end vertices then $\hom(G,A_{\mathrm{ind}})=I(G)$, the number of independent sets of $G$.
Indeed, the vertices which maps to the vertex of $A_{\mathrm{ind}}$ without loop have to form an independent set in $G$. Note that if $A=K_q$, then $\hom(G,A)$ counts the number of proper colorings of $G$ with $q$ colors.

When $A_{\mathrm{WR}}=P_3^{\circ}$, a path on $3$ vertices with a loop added at each vertex then $\hom(G,A_{\mathrm{WR}})$ counts the number of Widom-Rowlinson configurations. 

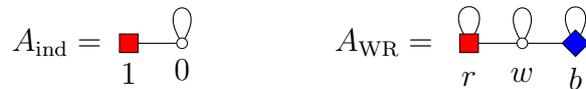
\begin{figure}[h]
\centering
\[ A_{\mathrm{ind}} = 
\begin{tikzpicture}[scale=.7, baseline=-1mm]
  \node[vertex, c1, label={below:$1$}] (v1) at (-1,0) {};
  \node[vertex, c0, label={below:$0$}] (v0) at (0,0) {};
  \draw (v1) to (v0) .. controls +(60:1) and +(120:1) .. (v0);
\end{tikzpicture}
\qquad
\qquad
 A_{\mathrm{WR}} = 
\begin{tikzpicture}[scale=.7, baseline=-1mm]
  \node[vertex, c1, label={below:$\vphantom{b}r$}] (v1) at (-1,0) {};
  \node[vertex, c0, label={below:$\vphantom{b}w$}] (v0) at (0,0) {};

  \node[vertex, c2, label={below:$b$}] (v2) at (1,0) {};
  \draw (v1) .. controls +(60:1) and +(120:1) .. (v1) to
        (v0) .. controls +(60:1) and +(120:1) .. (v0) to
        (v2) .. controls +(60:1) and +(120:1) .. (v2);
\end{tikzpicture}
\]
\caption{The target graphs for the hard-core model (independent sets) and the 
Widom-Rowlinson model.}
\label{fig:H-examples}
\end{figure}

E. Cohen, W. Perkins and P. Tetali \cite{CPT} (for simpler proofs see also \cite{CCPT,S}) proved that in this case $K_{d+1}$ is the maximizing graph:
\begin{align}
\sup_{G\in \mathcal{G}_d}\hom(G,A_{\mathrm{WR}})^{1/v(G)}=\hom(K_{d+1},A_{\mathrm{WR}})^{1/v(K_{d+1})}.
\end{align}
It turns out that $K_{d+1}$ is sometimes the minimizing graph. For instance, J. Cutler and J. Radcliffe \cite{CR} proved that
\begin{align}
\inf_{G\in \mathcal{G}_d}I(G)^{1/v(G)}=I(K_{d+1})^{1/v(K_{d+1})}.
\end{align}
It is also known (for details see \cite{Zhao3}) that
\begin{align}
\inf_{G\in \mathcal{G}_d}\hom(G,K_q)^{1/v(G)}=\hom(K_{d+1},K_q)^{1/v(K_{d+1})}.
\end{align}
For many more examples see the recent survey of Y. Zhao \cite{Zhao3} and the references therein.
\bigskip

On the other hand, this paper is not about the extremality of $K_{d+1}$ and $K_{d,d}$. This paper is about the extremality of the infinite $d$--regular tree. To enlighten this phenomenon we give two theorems which together gives an interesting theorem about $q$-colorings of bipartite graphs. 

\begin{Th}[P. Csikv\'ari and Z. Lin \cite{CL}] \label{count_coloring} For any bipartite graph $G$ with $v(G)$ vertices and $e(G)$ edges we have 
\begin{align}
\hom(G,K_q)\geq q^{v(G)}\left(\frac{q-1}{q}\right)^{e(G)}.
\end{align}
In particular, if $G$ is a $d$--regular bipartite graph then
\begin{align}
\hom(G,K_q)^{1/v(G)}\geq q\left(\frac{q-1}{q}\right)^{d/2}.
\end{align}
Furthermore, if the graph $G$ contains $\varepsilon n$ vertex disjoint cycles of length at most $\ell$, then there is some $c_q(\varepsilon, \ell)>1$ such that
\begin{align}
\hom(G,K_q)\geq c_q(\varepsilon, \ell)^{v(G)}q^{v(G)}\left(\frac{q-1}{q}\right)^{e(G)}.
\end{align}
\end{Th}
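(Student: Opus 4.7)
My plan is to prove the first inequality by induction on $e(G)$ using the deletion--contraction identity $\hom(G,K_q)=\hom(G-e,K_q)-\hom(G/e,K_q)$. The base case $e(G)=0$ is trivial. For the inductive step, choose any edge $e=(u,v)$ with $u\in A$, $v\in B$. Since $G-e$ is bipartite with one fewer edge, the inductive hypothesis gives
\[
\hom(G-e,K_q)\geq q^{v(G)}\left(\frac{q-1}{q}\right)^{e(G)-1},
\]
so it is enough to prove the key estimate $\hom(G/e,K_q)\leq \hom(G-e,K_q)/q$; then $\hom(G,K_q)\geq (1-1/q)\hom(G-e,K_q)\geq q^{v(G)}((q-1)/q)^{e(G)}$. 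The $d$-regular bipartite consequence follows by substituting $e(G)=dv(G)/2$.

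Rewriting the estimate probabilistically, I need to show that under a uniformly random proper $q$-coloring $\phi$ of the bipartite graph $G-e$, $\Pr[\phi(u)=\phi(v)]\leq 1/q$. I would prove this by a Kempe chain argument. Fix colors $a\neq b$ and consider the subgraph of $G-e$ induced by $\phi^{-1}(a)\cup\phi^{-1}(b)$; flipping $a\leftrightarrow b$ on any connected component of this "Kempe" subgraph produces another proper coloring. Suppose $\phi(u)=\phi(v)=a$. Any alternating $a,b$-path from $u$ to $v$ would have odd length, because $u\in A$ and $v\in B$ lie on opposite sides of a bipartite graph; starting at $u$ colored $a$, the other endpoint would be forced to be colored $b$, contradicting $\phi(v)=a$. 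So $u$ and $v$ lie in distinct Kempe components, and flipping the component of $u$ alone yields a proper coloring $\phi'$ with $\phi'(u)=b$, $\phi'(v)=a$. This map is an involution, hence injective, so if $N(x,y)$ denotes the number of proper colorings with $(\phi(u),\phi(v))=(x,y)$, then $N(a,a)\leq N(b,a)$. Color symmetry makes $N(a,a)$ and $N(a,b)$ depend only on whether the two arguments are equal, so writing $M=N(a,a)$ and $N=N(a,b)$,
\[
\Pr[\phi(u)=\phi(v)]=\frac{qM}{qM+q(q-1)N}\leq \frac{1}{q},
\]
completing the induction.

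For the quantitative refinement, I would choose the deletion edge $e=(u,v)$ in the induction to lie inside one of the $\varepsilon v(G)$ guaranteed vertex-disjoint short cycles. Then $G-e$ still contains the remainder of that cycle, a $u$--$v$ path of length at most $\ell-1$. The Kempe injection misses precisely those colorings with $(\phi(u),\phi(v))=(b,a)$ in which $u$ and $v$ share a Kempe component, and such configurations can be constructed by prescribing the alternating $a,b$-pattern along the short path and then counting extensions to the rest of $G-e$. This will upgrade the key estimate to $\Pr[\phi(u)=\phi(v)]\leq 1/q-\delta_q(\ell)$ for some $\delta_q(\ell)>0$, contributing a factor $1+\delta_q(\ell)\frac{q}{q-1}>1$ in the inductive step. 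Since the $\varepsilon v(G)$ cycles are vertex-disjoint, the strengthened steps can be carried out independently and the improvements compound to the claimed $c_q(\varepsilon,\ell)^{v(G)}$ gain. The main obstacle will be obtaining a lower bound on the number of "missing" colorings that is uniform in the rest of $G-e$: one has to show that fixing the alternating pattern on the short $u$--$v$ path costs only a bounded multiplicative factor, and ensure that the per-cycle gains accumulate multiplicatively across the disjoint cycles without hidden dependencies.
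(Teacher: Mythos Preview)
This theorem is quoted from \cite{CL} and is not proved in the present paper, so there is no ``paper's own proof'' to compare against. I will therefore just assess your argument.

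Your proof of the first inequality is correct and clean. The Kempe-chain injection showing $N(a,a)\le N(b,a)$, together with colour symmetry, gives exactly $\Pr[\phi(u)=\phi(v)]\le 1/q$ in any bipartite graph, and the deletion--contraction induction then yields the bound. The $d$-regular consequence follows immediately. This is indeed the standard route and matches what one finds in \cite{CL}.

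The sketch for the ``Furthermore'' clause, however, has a genuine gap. To obtain $\Pr[\phi(u)=\phi(v)]\le 1/q-\delta_q(\ell)$ you need the excess $N-M$ to be at least a fixed positive fraction of $N$, i.e.\ that, conditionally on $(\phi(u),\phi(v))=(b,a)$, the event that $u$ and $v$ lie in the same $(a,b)$-Kempe component has probability bounded below by a constant depending only on $q$ and $\ell$. Your proposed certificate is to force the alternating pattern along the short $u$--$v$ path, but the interior vertices of that path may have arbitrarily many neighbours in the rest of $G-e$, so the conditional probability of any fixed colour pattern on the path need not be bounded away from zero uniformly in $G$. You flag this obstacle yourself, but it is not a mere technicality: without an additional idea the per-cycle improvement can degenerate.

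There is a much simpler way to get the refinement, and it avoids this issue entirely. First apply your basic inequality $\hom(G,K_q)\ge \tfrac{q-1}{q}\,\hom(G-e,K_q)$ to delete, one by one, every edge of $G$ that does \emph{not} lie on one of the $\varepsilon n$ chosen vertex-disjoint cycles. What remains is the disjoint union of those cycles together with isolated vertices, and now one computes directly: for an even cycle $C_{2m}$ one has $\hom(C_{2m},K_q)=(q-1)^{2m}+(q-1)$, so each cycle of length at most $\ell$ contributes an extra multiplicative factor of at least $1+(q-1)^{1-\ell}$ over the baseline $(q-1)^{|C_i|}$. Multiplying over the $\varepsilon n$ disjoint cycles gives $c_q(\varepsilon,\ell)=\bigl(1+(q-1)^{1-\ell}\bigr)^{\varepsilon}>1$. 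This bypasses the uniform-excess problem because the cycles are handled only after the surrounding edges have been stripped away.
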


\begin{Th}[A. Bandyopadhyay and D. Gamarnik \cite{BG}]
Let $q\geq d+1$. Let $(G_i)$ be a sequence of $d$--regular graphs such that the girth $g(G_i)$ (the length of the shortest cycle) tends to infinity. Then
\begin{align}
\lim_{i\to \infty}\hom(G_i,K_q)^{1/v(G_i)}=q\left(\frac{q-1}{q}\right)^{d/2}.
\end{align}
\end{Th}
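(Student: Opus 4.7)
Proof proposal. The lower bound
$$\liminf_{i\to\infty}\hom(G_i,K_q)^{1/v(G_i)}\geq q\left(\frac{q-1}{q}\right)^{d/2}$$
is essentially given by Theorem \ref{count_coloring}: when the $G_i$ are bipartite, the estimate is immediate; for general $d$-regular $G_i$ one reduces to the bipartite case via the bipartite double cover $G_i\times K_2$, which is $d$-regular with girth $\geq g(G_i)$, so the substance of the theorem is the matching upper bound. My plan is an edge-revelation telescoping combined with correlation decay of the uniform proper $q$-coloring Gibbs measure on $\mathbb{T}_d$; the hypothesis $q\geq d+1$ enters precisely here through Jonasson's uniqueness theorem.

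Write $n=v(G_i)$, $m=dn/2$, and fix an ordering $e_1,\ldots,e_m$ of the edges of $G_i$ (to be chosen). Let $G_i^{(j)}$ be the subgraph on $V(G_i)$ containing exactly the first $j$ edges, so $G_i^{(0)}$ is edgeless and $G_i^{(m)}=G_i$. Then
$$\hom(G_i,K_q)=q^n\prod_{j=1}^m \frac{\hom(G_i^{(j)},K_q)}{\hom(G_i^{(j-1)},K_q)},$$
and the $j$-th factor equals $\Pr_c[c(u_j)\neq c(v_j)]$, where $c$ is uniform over proper $q$-colorings of $G_i^{(j-1)}$ and $e_j=\{u_j,v_j\}$. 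To prove the upper bound it suffices to show $\Pr_c[c(u_j)\neq c(v_j)]\leq (q-1)/q+o(1)$ as $g(G_i)\to\infty$, for all but $o(n)$ values of $j$.

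I would obtain this marginal estimate by choosing the ordering so that at step $j$, the radius-$r$ neighborhood of $\{u_j,v_j\}$ in $G_i^{(j-1)}$ is a subtree of the $d$-regular tree for some $r=r(g(G_i))\to\infty$; one natural choice is a BFS order rooted at a random vertex, so that $G_i^{(j-1)}$ is always a union of trees glued along previously revealed vertices, and the $o(n)$ edges of $G_i$ lying on short cycles are revealed last. Once the neighborhood of $\{u_j,v_j\}$ in $G_i^{(j-1)}$ is a truncated $d$-regular tree, one couples the joint marginal of $(c(u_j),c(v_j))$ with the Gibbs measure on $\mathbb{T}_d$ under arbitrary boundary conditions at depth $r$; exponential correlation decay (which holds for $q\geq d+1$ by Jonasson) then yields $\Pr_c[c(u_j)=a,c(v_j)=b]=1/q^2+e^{-\Omega(r)}$, hence $\Pr_c[c(u_j)\neq c(v_j)]\leq (q-1)/q+e^{-\Omega(r)}$.

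The hard part is making this quantitative and uniform: the coupling bound must hold over every partial coloring of the depth-$r$ neighborhood that extends to a proper coloring of $G_i^{(j-1)}$, and the per-edge errors must aggregate to $o(n)$ in the exponent. Combining uniform exponential decay from tree uniqueness with the tree-likeness of $G_i^{(j-1)}$ for the bulk of $j$'s, one concludes
$$\hom(G_i,K_q)\leq q^n\left(\frac{q-1}{q}+e^{-\Omega(r(g(G_i)))}\right)^{dn/2}\cdot q^{o(n)},$$
and taking $v(G_i)$-th roots while letting $g(G_i)\to\infty$ matches the lower bound, giving the stated equality.
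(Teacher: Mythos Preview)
The paper does not prove this theorem; it is quoted as a result of Bandyopadhyay and Gamarnik \cite{BG}, so there is no in-paper proof to compare against. Your edge-revelation telescoping combined with correlation decay is essentially the strategy of \cite{BG}, so the overall plan is sound.

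One concrete issue: your reduction of the lower bound for non-bipartite $G_i$ to the bipartite case via $G_i\times K_2$ does not work as stated. Theorem~\ref{count_coloring} gives $\hom(G_i\times K_2,K_q)^{1/v(G_i\times K_2)}\geq q\bigl(\tfrac{q-1}{q}\bigr)^{d/2}$, but transferring this to $G_i$ would require $\hom(G_i,K_q)^2\geq\hom(G_i\times K_2,K_q)$, which is false in general (for $G=K_3$, $q=3$ one gets $36<66$, since $K_3\times K_2\cong C_6$). This gap is not fatal: your correlation-decay step actually yields the two-sided estimate $\Pr_c[c(u_j)\neq c(v_j)]=(q-1)/q+o(1)$, so the telescoping already produces both the upper and lower bounds, and the separate bipartite reduction is unnecessary.

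A second point to be careful about: Jonasson's theorem gives uniqueness of the Gibbs measure on $\mathbb{T}_d$, not strong spatial mixing on arbitrary subgraphs of maximum degree $d$ under arbitrary boundary conditions. Making the coupling bound uniform over all boundary colorings at depth $r$, particularly at the threshold $q=d+1$, is the genuinely delicate part of \cite{BG}, and your sketch elides it.
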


The two theorems together show that for $q\geq d+1$ we have
\begin{align}
\inf_{G\in \mathcal{G}_d^b}\hom(G,K_q)^{1/v(G)}=q\left(\frac{q-1}{q}\right)^{d/2},
\end{align}
but the infimum is not achieved by a finite graph.
\bigskip

Throughout the whole paper let $g(G)$ denote  the girth of a graph $G$, i. e., the length of the shortest cycle of the graph $G$.
\medskip

It turns out that the case when $(G_i)$ is a sequence of $d$--regular graphs such that $g(G_i)\to \infty$ is a particular case of a Benjamini--Schramm convergent graph sequence, and there is a limit object which is the infinite $d$--regular tree $\mathbb{T}_d$ in this case. Benjamini--Schramm convergence will be explained in Section~\ref{BS-conv}. The above example motivates the following definition.

\begin{Def} For a graph parameter $P(.)$ let
\begin{align}
"P(\mathbb{T}_d)^{1/v(\mathbb{T}_d)}"=\inf_{(G_i)}\liminf_{g(G_i)\to \infty \atop G_i\in \mathcal{G}_d}P(G_i)^{1/v(G_i)},
\end{align}
and let
\begin{align}
"P(\mathbb{T}^b_d)^{1/v(\mathbb{T}^b_d)}"=\inf_{(G_i)}\liminf_{g(G_i)\to \infty \atop G_i\in \mathcal{G}^b_d}P(G_i)^{1/v(G_i)}.
\end{align}
(The letter b stands for bipartite.)
\end{Def}

In the above definitions it is somewhat inconvenient that we need to take an extra infimum even after liminf. One would like to see simply a limit. Unfortunately, we know that in general there is no limit in such an expression. On the other hand, the author hopes that whenever we can prove an extremal result then we actually have a limit in these definitions. 

Note that one can rewrite the definition of $"P(\mathbb{T}_d)^{1/v(\mathbb{T}_d)}"$ as follows:
\begin{align}
"P(\mathbb{T}_d)^{1/v(\mathbb{T}_d)}"=\lim_{k\to \infty}\inf_{g(G)\geq k}P(G)^{1/v(G)}.
\end{align}
This definition looks simpler, but it hides the real problem, namely that we would like to determine this quantity. Note that the statement
\begin{align}
P(G)^{1/v(G)}\geq  "P(\mathbb{T}_d)^{1/v(\mathbb{T}_d)}"
\end{align}
is really a combinatorial statement, it simply means that for any $k$ there is some graph $G_k$ such that the girth of $G_k$ is at least $k$ and
\begin{align}
P(G)^{1/v(G)}\geq P(G_k)^{1/v(G_k)}.
\end{align}
\bigskip

Now we are ready to give some theorems which will play  exemplary roles. 

\begin{Th} \label{Th-A} Let $A_{\mathrm{WR}}=P_3^o$, the path on $3$ vertices with a loop added at each vertices.  Let $G$ be a $d$--regular graph. Then
\begin{align}
\hom(G,A_{\mathrm{WR}})^{1/v(G)}\geq "\hom(\mathbb{T}_d,A_{\mathrm{WR}})^{1/v(\mathbb{T}_d)}".
\end{align}
\end{Th}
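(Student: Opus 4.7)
The strategy I would follow is the one signalled by the abstract: iterated $2$-lifts. Recall that a $2$-lift of $G$ is parametrised by a signing $\sigma:E(G)\to\{+1,-1\}$, by taking $V(\tilde G_\sigma)=V(G)\times\{0,1\}$ and, for each edge $uv\in E(G)$, adding the two edges $(u,0)(v,0),(u,1)(v,1)$ if $\sigma(uv)=+1$ and $(u,0)(v,1),(u,1)(v,0)$ if $\sigma(uv)=-1$. Each lift $\tilde G_\sigma$ is $d$-regular on $2v(G)$ vertices; the two extreme signings $\sigma\equiv+1$ and $\sigma\equiv-1$ give $G\sqcup G$ and the bipartite double cover $G\times K_2$, respectively.

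My plan has two ingredients. The first, and main, one is to establish the following \emph{key inequality}: for every finite $d$-regular graph $G$ there is a $2$-lift $\tilde G$ of $G$ with
\begin{equation*}
\hom(\tilde G, A_{\mathrm{WR}})\le\hom(G, A_{\mathrm{WR}})^2,\qquad\text{equivalently }\qquad\hom(\tilde G,A_{\mathrm{WR}})^{1/v(\tilde G)}\le\hom(G,A_{\mathrm{WR}})^{1/v(G)}.
\end{equation*}
Given this, I would iterate: starting from $G_0=G$ one produces a sequence $G_0,G_1,G_2,\dots$ in which each $G_{i+1}$ is a $2$-lift of $G_i$ realising the key inequality, and the signings are chosen to also destroy every short cycle. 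That the latter is simultaneously arrangeable is by now a standard fact in the $2$-lift literature (a signing that is non-trivial around each cycle of length $\le i$ opens up all such cycles in the lift, and one can combine this with the key inequality by averaging on the subspace of signings that achieve it). The resulting sequence has $g(G_i)\to\infty$ and, telescoping the key inequality,
\begin{equation*}
\hom(G_i,A_{\mathrm{WR}})^{1/v(G_i)}\le\hom(G,A_{\mathrm{WR}})^{1/v(G)}\quad\text{for every }i,
\end{equation*}
which, by the definition of $"\hom(\mathbb{T}_d,A_{\mathrm{WR}})^{1/v(\mathbb{T}_d)}"$, gives the theorem.

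The hard part is the key inequality. Writing $H$ for the $3\times 3$ adjacency matrix of $A_{\mathrm{WR}}$, $\hom(\tilde G_\sigma, A_{\mathrm{WR}})$ expands as a sum over pairs $(\varphi_0,\varphi_1):V(G)\to\{r,w,b\}$, with each edge $uv$ contributing the weight $H_{\varphi_0(u)\varphi_0(v)}H_{\varphi_1(u)\varphi_1(v)}$ if $\sigma(uv)=+1$ and $H_{\varphi_0(u)\varphi_1(v)}H_{\varphi_1(u)\varphi_0(v)}$ if $\sigma(uv)=-1$, while $\hom(G,A_{\mathrm{WR}})^2$ is exactly the sum of the ``all $+$'' weights. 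A naive averaging over all $\sigma\in\{\pm1\}^{E(G)}$ fails, since the $-$ weight can strictly exceed the $+$ weight locally (for instance $H_{rw}H_{wb}=1>0=H_{rb}H_{ww}$), so $\sigma$ must be chosen more carefully. I would try to exploit the involution $r\leftrightarrow b$ of $A_{\mathrm{WR}}$ to couple configurations between lifts: for any pair $(\varphi_0,\varphi_1)$ contributing to $\hom(\tilde G_\sigma, A_{\mathrm{WR}})$, applying the involution to one sheet on a well-chosen set of edges should produce a pair contributing to $\hom(G, A_{\mathrm{WR}})^2$, and organising these swaps along a spanning subgraph of $G$ should make the resulting map injective. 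Small cases with the bipartite double cover $\tilde G=G\times K_2$ support the stronger guess that this single $2$-lift already works: $G=K_2$ gives $\mathrm{tr}(H^4)=35\le 7^2=49$ and $G=K_3$ gives $\mathrm{tr}(H^6)=199\le 15^2=225$.
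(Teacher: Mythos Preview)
Your high-level strategy is the right one and matches the paper: prove a ``key inequality'' comparing $\hom(G,A_{\mathrm{WR}})^2$ with $\hom$ of a $2$-lift, then iterate along a tower of $2$-lifts whose girth tends to infinity, and invoke the definition of $"\hom(\mathbb{T}_d,A_{\mathrm{WR}})^{1/v(\mathbb{T}_d)}"$. The gaps are in the execution of both ingredients.

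\textbf{The iteration step is circular as written.} You propose to prove the key inequality only for \emph{some} $2$-lift $\tilde G$, and then say one can ``combine this with'' a signing that opens up all short cycles. But there is no mechanism that guarantees a signing doing both simultaneously: if your key inequality is only known for one particular $\sigma$ (say $\sigma\equiv -1$, giving $G\times K_2$, as your small-case evidence suggests), then after one step $G_1$ is bipartite and $G_1\times K_2\cong G_1\sqcup G_1$, so the girth stops growing. The ``averaging on the subspace of signings'' remark does not help, because you have not established the inequality on average over any coset --- indeed you correctly note that naive averaging fails. The paper sidesteps this entirely by proving the inequality for \emph{every} $2$-lift $H$ of $G$; then one simply picks, at each step, any $2$-lift that increases the girth (Linial's observation), and the inequality comes for free.

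\textbf{The key inequality itself is not proved.} Your injection idea via the involution $r\leftrightarrow b$ is too vague, and the local obstruction you spotted is real. The paper's resolution is structural: the matrix $A_{\mathrm{WR}}$ (with the ordering $r,w,b$) is $TP_2$, meaning every $2\times 2$ minor is non-negative. The paper shows (its Theorem~\ref{pos} via Lemma~\ref{pos-lemma}) that for any non-negative symmetric $TP_2$ matrix $A$ and any $2$-lift $H$,
\[
Z(G\cup G,A)\ \ge\ Z(H,A).
\]
The trick is to group pairs $(\varphi_0,\varphi_1)$ by the unordered pair $\{\varphi_0(u),\varphi_1(u)\}$ at each vertex; within a class, the sum over all sign choices $\sigma_\varphi(u)\in\{\pm1\}$ becomes a signed Eulerian-subgraph expansion in which the $TP_2$ condition forces every surviving term to be non-negative, with the $\sigma\equiv+1$ lift (i.e.\ $G\cup G$) attaining the maximum. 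Your local counterexample $H_{rw}H_{wb}>H_{rb}H_{ww}$ corresponds to a \emph{non-canonical} ordering of the pair at $v$; once you canonically order ($t_1<t_2$, $s_1<s_2$), the $TP_2$ property says the ``$+$'' product always dominates, and the parity argument handles the global sign.

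So the missing idea is: observe that $A_{\mathrm{WR}}$ is $TP_2$, prove the inequality uniformly for all $2$-lifts via the $TP_2$/Eulerian argument, and then iterate freely.
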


\noindent Later we will extend Theorem~\ref{Th-A} to a graph class $\mathcal{A}$. The next theorem is also an example for this phenomenon, but it is not a new theorem. In a bit different form it appears in \cite{Ruo}.

\begin{Th}\label{Th-B}
Let $I(G)$ denote the number of independent sets of a graph $G$. Then for any $d$--regular bipartite graph $G$  
we have
\begin{align}
I(G)^{1/v(G)}\geq "I(\mathbb{T}^b_d)^{1/v(\mathbb{T}^b_d)}".
\end{align}
\end{Th}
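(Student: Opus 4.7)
The plan is to use the 2-lift method, the paper's main tool. A \emph{2-lift} $G^\sigma$ of $G$ is determined by a signing $\sigma:E(G)\to\{\pm 1\}$: its vertex set is $V(G)\times\{0,1\}$, and each edge $uv\in E(G)$ lifts to $(u,0)(v,0),(u,1)(v,1)$ when $\sigma(uv)=+1$ and to $(u,0)(v,1),(u,1)(v,0)$ when $\sigma(uv)=-1$. The lift of a $d$-regular bipartite graph is again $d$-regular bipartite, and any cycle in $G^\sigma$ projects to a closed walk of the same length in $G$, so $g(G^\sigma)\geq g(G)$; in particular girth never decreases under 2-lifts.

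The argument has two key ingredients. The first---and the main challenge---is the \emph{bipartite 2-lift inequality} for independent sets:
\begin{align*}
I(G^\sigma)\leq I(G)^2 \qquad\text{for every bipartite } G \text{ and every signing } \sigma,
\end{align*}
equivalently $I(G^\sigma)^{1/v(G^\sigma)}\leq I(G)^{1/v(G)}$. The bipartite hypothesis is essential: for non-bipartite $G$ the opposite inequality $I(G\times K_2)\geq I(G)^2$ of Zhao can be strict, so no such universal bound is available in general. I expect the inequality to require a careful combinatorial argument: an independent set of $G^\sigma$ corresponds to a labeling of $V(G)$ by $\{00,01,10,11\}$ (recording the indicators of $(v,0)$ and $(v,1)$) satisfying $\sigma$-dependent local edge constraints, and one must produce a weight-preserving injection from such labelings into pairs of independent sets of $G$, using the bipartition $V(G)=A\sqcup B$ to perform a coordinate swap on one color class that decouples the two ``layers''. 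This is the content, in slightly different language, of \cite{Ruo}.

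The second ingredient is that iterated 2-lifts can realize a sequence with girth tending to infinity: this is standard because the universal cover of any finite $d$-regular graph is $\mathbb{T}_d$, and such a graph has finite $2^k$-sheeted covers of girth at least $k$ for every $k$, all obtainable as a tower of $k$ successive 2-lifts since every $2$-group admits a composition series of $\mathbb{Z}/2$-quotients. Putting the two ingredients together, set $G_0=G$ and choose signings $\sigma_i$ so that $G_{i+1}:=G_i^{\sigma_i}$ is $d$-regular bipartite and $g(G_i)\to\infty$. Applying the bipartite 2-lift inequality at each step yields
\begin{align*}
"I(\mathbb{T}^b_d)^{1/v(\mathbb{T}^b_d)}" \;=\; \lim_{k\to\infty}\inf_{H\in\mathcal G^b_d,\,g(H)\geq k} I(H)^{1/v(H)} \;\leq\; \liminf_{i\to\infty} I(G_i)^{1/v(G_i)} \;\leq\; I(G)^{1/v(G)},
\end{align*}
which is the desired bound.
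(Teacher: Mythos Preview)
Your overall strategy---establish $I(H)\leq I(G)^2$ for every bipartite $G$ and every $2$-lift $H$, then iterate $2$-lifts to push the girth to infinity---is exactly the paper's route (this is Theorem~\ref{general}(b), combined with Linial's observation that a tower of $2$-lifts with $g(G_i)\to\infty$ always exists). So the architecture is right.

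The gap is in your sketch of the key inequality. The ``coordinate swap on one color class'' does not decouple the two layers for an arbitrary signing $\sigma$. Concretely, with $G=A\sqcup B$ bipartite and $u\in A$, $v\in B$ adjacent, swapping the two coordinates at every vertex of $B$ turns the edge constraint coming from $\sigma(uv)=+1$ into the one coming from $\sigma(uv)=-1$ and vice versa; you simply land on the $2$-lift $G^{-\sigma}$, not on $G\cup G$. The swap is a bijection witnessing $I(G\times K_2)=I(G\cup G)$ for bipartite $G$, but it does not by itself give an injection from independent sets of a generic $G^{\sigma}$ into pairs of independent sets of $G$. (Your fallback citation to \cite{Ruo} is legitimate, but Ruozzi's argument is a log-supermodularity argument, not the swap you describe.)

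The paper's own proof of the inequality is both simpler and stronger than what you outline, and does not use bipartiteness at this step. Theorem~\ref{independent} shows $i_k(H)\leq i_k(G\times K_2)$ for \emph{every} graph $G$ and every $2$-lift $H$, by projecting an independent set of $H$ down to a configuration $R$ on $V(G)$ consisting of ``single'' and ``double'' vertices, and then comparing the number of lifts of $R$: in $G\times K_2$ each connected component of single vertices lifts in exactly two ways, whereas in a general $2$-lift it lifts in at most two ways. Summing over $R$ gives $I(H)\leq I(G\times K_2)$. Bipartiteness of $G$ enters only at the very last step, through the isomorphism $G\times K_2\cong G\cup G$, which yields $I(H)\leq I(G)^2$.

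A minor side remark: your claim that a $2^k$-sheeted cover automatically has girth at least $k$ is not correct in general; what you actually need (and what the paper uses, citing \cite{nati}) is only the existence of \emph{some} tower of $2$-lifts with girth tending to infinity.
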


\noindent Recall that $I(G)=\hom(G,A_{\mathrm{ind}})$. Later we will extend Theorem~\ref{Th-B} to a graph class $\mathcal{B}$. It turns out that for the number of independent sets the value $"I(\mathbb{T}^b_d)^{1/v(\mathbb{T}^b_d)}"$ can be determined exactly and this is a limit for every bipartite graph sequence of $d$--regular graphs $(G_i)$ with $g(G_i)\to \infty$. For details and an explicit version of Theorem~\ref{Th-B} see Subsection~\ref{case:ind}.
\bigskip

Let $Z(G,q,w)$ be the following statistical physical version of the Tutte-polynomial (also called dichromatic polynomial):
\begin{align}
Z(G,q,w)=\sum_{F\subseteq E(G)}q^{k(F)}w^{e(F)},
\end{align}
where $k(F)$ is the number of components of the subgraph $(V(G),F)$, and $e(F)=|F|$ is the number of edges.

The connection with the usual form of the Tutte-polynomial is the following:
\begin{align}
T(G,x,y)=(x-1)^{-k(E)}(y-1)^{-v(G)}Z(G,(x-1)(y-1),y-1).
\end{align}

We will prove the following theorem about the statistical physical version of the Tutte-polynomial. This theorem also appeared in \cite{Ruo2} in a slightly different form.

\begin{Th} \label{Tutte}
Let $Z(G,q,w)$ be defined as above, and assume that $q\geq 1$ and $w\geq 0$. Then 
\begin{align}
Z(G,q,w)^{1/v(G)}\geq "Z(\mathbb{T}_d,q,w)^{1/v(\mathbb{T}_d)}".
\end{align}
\end{Th}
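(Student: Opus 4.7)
The plan is to iterate 2-lifts. Starting from a $d$-regular graph $G=G_0$, I would build a sequence $G_0,G_1,G_2,\ldots$ of $d$-regular 2-lifts satisfying
\begin{align*}
Z(G_{i+1},q,w)\leq Z(G_i,q,w)^2 \qquad\text{and}\qquad g(G_i)\to\infty.
\end{align*}
Because $v(G_{i+1})=2v(G_i)$, the first inequality forces $Z(G_i,q,w)^{1/v(G_i)}$ to be non-increasing; combining this with the second and the definition of $"Z(\mathbb{T}_d,q,w)^{1/v(\mathbb{T}_d)}"$ as an infimum over girth-diverging $d$-regular sequences yields $Z(G,q,w)^{1/v(G)}\geq"Z(\mathbb{T}_d,q,w)^{1/v(\mathbb{T}_d)}"$.

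The crux is the 2-lift inequality $Z(\widetilde{G},q,w)\leq Z(G,q,w)^2$ for every 2-lift $\widetilde{G}$ of $G$ when $q\geq 1$ and $w\geq 0$. To prove it I would use the Potts representation
\begin{align*}
Z(G,q,w)=\sum_{c:V(G)\to[q]}\,\prod_{uv\in E(G)}\bigl(1+w\,\mathbf{1}[c(u)=c(v)]\bigr),
\end{align*}
valid for integer $q$ and extended to real $q\geq 1$ by polynomial interpolation in $q$. A coloring of $\widetilde{G}$ is a pair $(c_0,c_1)\colon V(G)\to[q]$, and for a signing $\sigma\in\{0,1\}^{E(G)}$ specifying the 2-lift one has $Z(\widetilde{G}_\sigma,q,w)=\sum_{c_0,c_1}\prod_{e}W_{\sigma(e)}(e;c_0,c_1)$, where the ``trivial'' weight $W_0$ sums to $Z(G,q,w)^2$ and the ``twisted'' weight $W_1$ pairs $c_0(u)$ with $c_1(v)$. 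A direct termwise comparison is not available: small examples on $G=C_3$ show that the naive subgraph-expansion inequality $k_\sigma(F_0,F_1)\leq k_G(F_0)+k_G(F_1)$ on the numbers of components fails for some $(F_0,F_1)$. The inequality must therefore be established globally, exploiting log-supermodularity of the random-cluster weights for $q\geq 1$, $w\geq 0$; this can be done either by a gauge/cohomology argument reducing an arbitrary $\sigma$ to the trivial one up to a non-negative error, or by adapting the Bethe-free-energy framework of \cite{Ruo2}.

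The girth-growth step is a standard Bilu--Linial-type observation: if $g(G_i)<\infty$, a random (or carefully chosen) signing $\sigma$ assigns an odd sum to every cycle of $G_i$ of length $g(G_i)$ with positive probability, so each such cycle lifts to a single cycle of doubled length in $\widetilde{(G_i)}_\sigma$ rather than to two disjoint copies. Iterating this either strictly increases the girth or strictly decreases the number of shortest cycles, and combining with the key lemma one can drive $g(G_i)\to\infty$ while keeping $Z(G_i,q,w)^{1/v(G_i)}$ non-increasing.

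The main obstacle is clearly the 2-lift inequality, because the nonlinear dependence of $Z(G,q,w)=\sum_F q^{k(F)}w^{|F|}$ on $k(F)$ prevents any simple termwise comparison. Once that lemma is in hand, iteration together with the girth-growth argument gives the theorem.
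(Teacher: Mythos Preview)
Your overall architecture is exactly the paper's: the statement is deduced from the $2$-lift inequality
\[
Z(G,q,w)^2=Z(G\cup G,q,w)\geq Z(H,q,w)\qquad(q\geq 1,\ w\geq 0,\ H\ \text{a $2$-lift of}\ G)
\]
(Theorem~\ref{Tutte-2}) together with the girth-increasing $2$-lift iteration (Linial's observation, packaged as Theorem~\ref{general}). So the reduction you describe, including the girth-growth step, is precisely what the paper does.

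The gap is that you do not prove the $2$-lift inequality; you only gesture at two possible routes. Your ``gauge/cohomology argument reducing an arbitrary $\sigma$ to the trivial one up to a non-negative error'' is too vague to evaluate, and as you yourself note, no edge-set-wise or termwise comparison works. Citing \cite{Ruo2} is legitimate (Ruozzi indeed proves the inequality, even for $k$-lifts), but then you are importing the entire nontrivial content of the theorem from elsewhere. The paper's own proof is concrete and worth knowing: from the FKG inequality for the random-cluster model (valid precisely when $q\geq 1$, $w\geq 0$) one extracts the correlation inequality
\[
Z(H-\{e,f\},q,w)\,Z(H/\{e,f\},q,w)\ \geq\ Z((H-e)/f,q,w)\,Z((H/e)-f,q,w),
\]
and then argues by induction on the number of edges via deletion--contraction, pairing an edge $e$ of $G$ with its twin $f$ in the lift. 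The four terms of the expansion of $Z(G\cup G)$ dominate the four terms of $Z(H)$: two by induction (since $H-\{e,f\}$ is a $2$-lift of $G-e$ and $H/\{e,f\}$ is a $2$-lift of $G/e$), and the two cross terms by combining induction with the displayed FKG consequence via a geometric-mean step. This is the ``global'' argument exploiting log-supermodularity that you allude to; it is short and direct.

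One further remark: your detour through the Potts colour representation and ``polynomial interpolation in $q$'' is unnecessary. The random-cluster form $Z(G,q,w)=\sum_{F\subseteq E}q^{k(F)}w^{|F|}$ is defined for all real $q$, and the FKG lattice condition holds for all real $q\geq 1$, so the paper's argument works directly without any integrality assumption.
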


\noindent In Subsection~\ref{case:Tutte} we will give a more explicit form of this theorem.
\bigskip

\subsection{Methods.} The key tool in this paper is the transformation $2$-lift. The definition of a $2$-lift is given below.

\begin{Def} Let $G$ be a graph. Then $H$ is a $2$-lift of $G$, if $V(H)=V(G)\times \{0,1\}$, and for every $(u,v)\in E(G)$, exactly one of the following two pairs are edges of $H$: $((u,0),(v,0))$ and $((u,1),(v,1))\in E(H)$, or $((u,0),(v,1))$ and $((u,1),(v,0))\in E(H)$. If $(u,v)\notin E(G)$, then none of $((u,0),(v,0))$, $((u,1),(v,1))$, $((u,0),(v,1))$ and $((u,1),(v,0))$ are edges in $H$.

More generally one can define a $k$-lift (or $k$-cover) of a graph as follows. The vertex set of a $k$-lift $H$ is $V(H)=V(G)\times \{0,1,\dots, k-1\}$, and if $(u,v)\in E(G)$ then we choose a perfect matching between the vertices $(u,i)$ and $(v,j)$ for $0\leq i,j\leq k-1$. If $(u,v)\notin E(G)$, then there is no edges between $(u,i)$ and $(v,j)$ for $0\leq i,j\leq k-1$.
\end{Def}

There are two notable special cases of a $2$-lift. When all edges are of the form $((u,0),(v,0))$ and $((u,1),(v,1))$ then we simply get two disjoint copies of the graph $G$, so it is just $G\cup G$. When all edges are of the form  $((u,0),(v,1))$ for some $(u,v)\in E(G)$ then we get $G\times K_2$. It will turn out that these  special $2$-lifts often play the role of an extremal graph among $2$-lifts of a graph.
\bigskip

Recall that for graphs $G$ and $A$, a map $\varphi:V(G)\to V(A)$ is called a homomorphism if $(\varphi(u),\varphi(v))\in E(A)$ whenever $(u,v)\in E(G)$. The number of homomorphisms from $G$ to $A$ is denoted by $\hom(G,A)$. One can identify the graph $A$ with its adjacency matrix and we get that $\hom(G,A)$ is a special case of the following definition valid for any symmetric matrix $A$, not just $0-1$ matrices.

\begin{Def} Let $A=(a_{ij})$ be a $q\times q$ symmetric matrix. Then
\begin{align}
Z(G,A)=\sum_{\varphi:V(G)\to [q]}\prod_{(u,v)\in E(G)}a_{\varphi(u),\varphi(v)},
\end{align}
where the notation $[q]$ stands for $\{1,2,\dots ,q\}$.
\end{Def}

So if we identify $A$ with its adjacency matrix we get that $Z(G,A)=\hom(G,A)$. On the other hand, the expression $Z(G,A)$ often shows up in statistical physics. Below we list some notable matrices for which we get the so-called partition function of some well-known statistical physical model.

$$A_{\mathrm{Is(\beta)}}=\left( \begin{array}{cc} e^{\beta} & e^{-\beta} \\ e^{-\beta} & e^{\beta} \end{array}\right), \,  \,
A_{\mathrm{ind}}=\left( \begin{array}{cc} 1 & 1 \\ 1 & 0 \end{array}\right),\, \,
A_{\mathrm{WR}}=\left( \begin{array}{ccc} 1 & 1 & 0\\ 1 & 1 & 1 \\ 0 & 1 & 1 \end{array}\right),$$
$$A_{q}(w)=\left( \begin{array}{cccc} 1+w & 1 & 1 & 1 \\ 1 & 1+w & 1 & 1 \\ 1 & 1 & 1+w & 1 \\ 1 & 1 & 1 & 1+w \end{array}\right).
$$
The $A_q(w)$ is a matrix of size $q\times q$, in the picture $A_4(w)$ is depicted. 
\medskip

The expression $Z(G, A_{\mathrm{Is(\beta)}})$ is the partition function of the Ising-model. When $\beta>0$ we speak about ferromagnetic regime, and when  $\beta<0$ then we speak about antiferromagnetic regime. It is well-known in statistical physics that the model behaves very differently in the two regimes. It will turn out that even in our paper this matters: we will prove that when $\beta>0$ then
\begin{align}
Z(G\cup G,A_{\mathrm{Is(\beta)}})\geq Z(H,A_{\mathrm{Is(\beta)}})
\end{align}
for any $2$-lift $H$ of $G$. This result also covered by a result of Ruozzi \cite{Ruo}. For $\beta<0$ we will prove that 
\begin{align}
Z(G\times K_2,A_{\mathrm{Is(\beta)}})\geq Z(H,A_{\mathrm{Is(\beta)}})
\end{align}
for any $2$-lift $H$ of $G$.
\medskip

The expression $Z(G, A_{\mathrm{ind}})$ counts the number of independent sets of $G$. As we remarked, Y. Zhao \cite{Zhao1} showed that for any graph $G$ we have
\begin{align}
Z(G\times K_2,A_{\mathrm{ind}})\geq Z(G\cup G,A_{\mathrm{ind}}).
\end{align}
We will show that in fact for any $2$-lift $H$ of $G$ we have
\begin{align}
Z(G\times K_2,A_{\mathrm{ind}})\geq Z(H,A_{\mathrm{ind}}).
\end{align}
In statistical mechanics counting independent sets corresponds to the hard-core model, and $Z(G,A_{\mathrm{ind}})$ is the partition function of the hard-core model. In general, statistical physicists also introduce a weight function: let $i_k(G)$ the number of independent sets of size $k$, then let
\begin{align}
I(G,\lambda)=\sum_{k=0}^{v(G)}i_k(G)\lambda^k.
\end{align}
With a slight extension of our definition of $Z(G,A)$ we can cover this case too: let us introduce a weight function $\nu:[q]\to \mathbb{R}_+$ and let
\begin{align}
Z(G,A,\nu)=\sum_{\varphi:V(G)\to [q]}\prod_{u\in V(G)}\nu(\varphi(u))\cdot \prod_{(u,v)\in E(G)}a_{\varphi(u),\varphi(v)}.
\end{align}
In the hard-core model $\nu(1)=1$ and $\nu(2)=\lambda$. One might wonder why we did not introduce immediately $Z(G,A,\nu)$. There is a good reason: absolutely unnecessary, in most of our theorems the following is true: if the theorem applies for some matrix $A$, then it is immediately true for the weighted version! See Section~\ref{weights} for the details.
\medskip

The expression $Z(G, A_{\mathrm{WR}})$ is the partition function of the Widom-Rowlinson model. This is the number of colorings with red (color $1$), white 
(color $2$) and blue (color $3$) such that red and blue vertices cannot be adjacent. In this case, we will prove that
\begin{align}
Z(G\cup G,A_{\mathrm{WR}})\geq Z(H,A_{\mathrm{WR}})
\end{align}
for any $2$-lift $H$ of $G$.
\medskip

The above discussion motivates the following definition.

\begin{Def}
Let $\mathcal{A}$ be the family of matrices $A$ for which
\begin{equation} \label{ineq-gen}
Z(G\cup G,A)\geq Z(H,A)
\end{equation}
for every graph $G$ and its $2$-lift $H$. Let $\mathcal{A}^b$ be the family of matrices $A$ for which
inequality~\ref{ineq-gen} holds for every bipartite graph $G$ and its $2$-lift $H$.

Finally, let $\mathcal{B}$ be the family of matrices $A$ for which
\begin{equation} \label{ineq-rev}
Z(G\times K_2,A)\geq Z(H,A)
\end{equation}
for every graph $G$ and its $2$-lift $H$. 

With some slight abuse of notation we say that a graph $A$ belongs to  $\mathcal{A}$ (resp. $\mathcal{A}^b$ or $\mathcal{B}$) if its adjacency matrix belongs to $\mathcal{A}$ (resp. $\mathcal{A}^b$ or $\mathcal{B}$).
\end{Def}

So we see two different behaviors so far: for the ferromagnetic Ising-model and the Widom-Rowlinson model the graph $G\cup G$ is the maximizing $2$-lift, in other words, $A_{\mathrm{Is(\beta)}},A_{\mathrm{WR}}\in \mathcal{A}$ for $\beta>0$. While for the antiferromagnetic Ising-model and the hard-core model the graph $G\times K_2$ is the maximizing graph among the $2$-lifts, in other words, $A_{\mathrm{Is(\beta)}},A_{\mathrm{ind}}\in \mathcal{B}$ for $\beta<0$.
In order to understand the difference between these models the following definition will be useful.

\begin{Def} A matrix $A$ is called $TP_2$-matrix if every $2\times 2$ submatrix (not just principal) of $A$ has a non-negative determinant. In other words, if $i<j$ and $r<s$ then the matrix $\left(\begin{array}{cc} a_{ir} & a_{is} \\ a_{jr} & a_{js} \\ \end{array} \right)$ has a non-negative determinant, i. e., $a_{ir}a_{js}-a_{is}a_{jr}\geq 0$. 

A matrix $A$ is called $TN_2$-matrix if every $2\times 2$ submatrix of $A$ has a non-positive determinant.
\end{Def}

We remark that the properties $TP_2$ and $TN_2$ heavily depend on the ordering of the rows and columns of the matrix. This is slightly inconvenient. In Theorem~\ref{gen} we will extend these notions in such a way that the obtained concept will not be sensitive to the permutations of rows and columns. 

Note that the Ising-model has a $TP_2$-matrix for $\beta\geq 0$, and $TN_2$-matrix for $\beta\leq 0$, and in general every $2\times 2$ matrix is either $TP_2$ or $TN_2$. The Widom-Rowlinson model has a $TP_2$ matrix. Y. Zhao proved that if $A$ is a loop-threshold graph then
\begin{align}
Z(G\times K_2,A)\geq Z(G\cup G,A).
\end{align}
A graph is loop-threshold if there exists a real number $w_i$ for each vertex $i$, and an $\alpha$ such that $(i,j)\in E(G)$ if and only if $w_i+w_j\leq \alpha$.
Loop-threshold graphs have an adjacency matrix where all $1$'s are condensing to the top-left corner. 

$$A_{\mathrm{thr}}=\left( \begin{array}{cccccc} 
1 & 1 & 1 & 1 & 1 & 1\\ 
1 & 1 & 1 & 1 & 1 & 0\\ 
1 & 1 & 1 & 1 & 0 & 0\\ 
1 & 1 & 1 & 1 & 0 & 0\\ 
1 & 1 & 0 & 0 & 0 & 0\\ 
1 & 0 & 0 & 0 & 0 & 0\\ 
\end{array}\right).
$$
\centerline{An example for an adjacency matrix of loop-threshold graph.}
\medskip

Extending Y. Zhao's theorem We will show that
\begin{align}
Z(G\times K_2,A)\geq Z(H,A)
\end{align}
for any loop-threshold graph $A$ and any $2$-lift $H$ of $G$, in other words, loop-threshold graphs are in $\mathcal{B}$.

It is easy to see that loop-threshold graphs have $TN_2$ adjacency matrices. Surprisingly, there is a similar class for $TP_2$-matrices: here the $1$'s are at around the diagonal or in other words, the $0$'s are condensing to the bottom-left and the top-right corners. Let us call these graphs thick paths. (If we put a loop to each vertex of a path we get such a graph.) Formally, we can define these graphs very similarly to the loop-threshold graphs: a graph is a thick-path if  there exists a real number $w_i$ for each vertex $i$, and an $\alpha$ such that $(i,j)\in E(G)$ if and only if $|w_i-w_j|\leq \alpha$.

$$A_{\mathrm{th. paths}}=\left( \begin{array}{cccccc} 
1 & 1 & 1 & 0 & 0 & 0\\ 
1 & 1 & 1 & 1 & 0 & 0\\ 
1 & 1 & 1 & 1 & 1 & 0\\ 
0 & 1 & 1 & 1 & 1 & 0\\ 
0 & 0 & 1 & 1 & 1 & 1\\ 
0 & 0 & 0 & 0 & 1 & 1\\ 
\end{array}\right).
$$
\centerline{An example for an adjacency matrix of thick-path graph.}
\medskip

For thick-path graphs $A$ we will show that
\begin{align}
Z(G\cup G,A)\geq Z(H,A)
\end{align}
for any $2$-lift $H$ of $G$, in other words, thick-path graphs are in $\mathcal{A}$.
\bigskip

After this introduction the following theorems are not surprising.

\begin{Th} \label{pos}
Let $A=(a_{ij})$ be a $q\times q$ non-negative symmetric $TP_2$-matrix. Let $G$ be a graph, and let $H$ be any $2$-lift of $G$. Then
\begin{align}
Z(G,A)^2=Z(G\cup G,A)\geq Z(H,A).
\end{align}
In other words, every non-negative symmetric $TP_2$-matrix is in $\mathcal{A}$.
In particular,
\begin{align}
Z(G,A)^2\geq Z(G\times K_2,A).
\end{align}
\end{Th}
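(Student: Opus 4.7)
The plan is to encode each $2$-lift $H$ by a signing $\sigma:E(G)\to\{\pm 1\}$ (with $\sigma\equiv+1$ yielding $G\cup G$), identify labelings $\psi:V(H)\to[q]$ with pairs $(\varphi_0,\varphi_1):V(G)\to[q]$ via $\varphi_i(v)=\psi(v,i)$, and then reduce the comparison $Z(G\cup G,A)\ge Z(H,A)$ to a termwise inequality by a sort-and-Fourier-expand step. Under this parametrization the contribution of an edge $e=(u,v)$ equals $a_{\varphi_0(u),\varphi_0(v)}a_{\varphi_1(u),\varphi_1(v)}$ when $\sigma(e)=+1$ and $a_{\varphi_0(u),\varphi_1(v)}a_{\varphi_1(u),\varphi_0(v)}$ when $\sigma(e)=-1$.

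First I would sort the labels vertex-by-vertex: set $\tilde\varphi_0(u)=\min(\varphi_0(u),\varphi_1(u))$ and $\tilde\varphi_1(u)=\max(\varphi_0(u),\varphi_1(u))$, let $V_{<}=\{u:\tilde\varphi_0(u)<\tilde\varphi_1(u)\}$, and record a swap indicator $\tau(u)\in\{0,1\}$ on $V_{<}$ telling whether $\varphi_0(u)$ was the minimum or the maximum. This is a bijection $(\varphi_0,\varphi_1)\longleftrightarrow(\tilde\varphi_0\le\tilde\varphi_1,\,\tau)$. Extending $\tau$ by $0$ off $V_{<}$, a short check shows that the contribution of each edge $e=(u,v)$ rewrites as $W_{\sigma(e)(-1)^{\tau(u)+\tau(v)}}(e)$, where
\[
W_{+}(e)=a_{\tilde\varphi_0(u),\tilde\varphi_0(v)}\,a_{\tilde\varphi_1(u),\tilde\varphi_1(v)},\qquad W_{-}(e)=a_{\tilde\varphi_0(u),\tilde\varphi_1(v)}\,a_{\tilde\varphi_1(u),\tilde\varphi_0(v)}.
\]
The $TP_2$ hypothesis, applied to the ordered rows $\tilde\varphi_0(u)\le\tilde\varphi_1(u)$ and columns $\tilde\varphi_0(v)\le\tilde\varphi_1(v)$, gives exactly $W_{+}(e)\ge W_{-}(e)\ge 0$, so the quantities $\alpha_e=\tfrac12(W_{+}(e)+W_{-}(e))$ and $\beta_e=\tfrac12(W_{+}(e)-W_{-}(e))$ are both non-negative.

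For each fixed sorted pair I would then expand $\prod_e(\alpha_e+\sigma(e)(-1)^{\tau(u)+\tau(v)}\beta_e)$ over subsets $F\subseteq E(G)$ and carry out the $\tau$-sum first. By $\mathbb{Z}_2$-orthogonality,
\[
\sum_{\tau:V_{<}\to\{0,1\}}(-1)^{\sum_{u\in V_{<}}\tau(u)\,d_F(u)}=2^{|V_{<}|}\cdot\mathbf{1}\bigl[d_F(u)\text{ is even for every }u\in V_{<}\bigr],
\]
so the inner sum collapses to $2^{|V_{<}|}\sum_F\bigl(\prod_{e\notin F}\alpha_e\bigr)\bigl(\prod_{e\in F}\sigma(e)\beta_e\bigr)$ restricted to $F$ with even $F$-degree at every vertex of $V_{<}$. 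For each such $F$ the general-$\sigma$ term equals $\bigl(\prod_{e\in F}\sigma(e)\bigr)\in\{\pm 1\}$ times the $\sigma\equiv+1$ term, which is non-negative; hence term by term the $\sigma\equiv+1$ contribution dominates. Summing over $F$ and then over sorted pairs yields $Z(G\cup G,A)\ge Z(H,A)$, and specializing $H=G\times K_2$ gives $Z(G,A)^2\ge Z(G\times K_2,A)$.

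The substantive content sits in the sort-and-expand bijection together with the non-negativity $\beta_e\ge 0$, which is precisely what $TP_2$ buys; together they force the $\tau$-sum to kill every $F$ outside a natural $\mathbb{Z}_2$-parity class, after which only a sign-by-sign comparison remains. The one piece of bookkeeping worth checking is the handling of vertices with $\tilde\varphi_0(u)=\tilde\varphi_1(u)$, where $W_{+}(e)=W_{-}(e)$ so $\tau(u)$ is immaterial and should not be summed over; beyond this, no induction on $G$ and no structural hypothesis beyond $TP_2$ are needed.
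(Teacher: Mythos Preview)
Your proof is correct and follows essentially the same route as the paper's: encode the $2$-lift by an edge signing, group labelings by the sorted pair $(\tilde\varphi_0,\tilde\varphi_1)$ at each vertex, rewrite each edge contribution as $\alpha_e+(\text{sign})\beta_e$ with $\beta_e\ge 0$ from $TP_2$, expand, and use $\mathbb{Z}_2$-orthogonality of the swap variables to reduce to a sum over even subgraphs where the $\sigma\equiv+1$ term dominates. The paper packages the last three steps as a separate lemma (Lemma~\ref{pos-lemma}) on $\{\pm1\}$-spin systems and explicitly peels off the edges touching $S_0=V\setminus V_{<}$ into a factor $w([\varphi])$ that is independent of the lift, whereas you treat all edges uniformly and let $\beta_e=0$ absorb those cases; these are cosmetic differences, not substantive ones.
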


\begin{Th} \label{neg}
Let $A=(a_{ij})$ be a $q\times q$ non-negative symmetric $TN_2$-matrix. Let $G$ be a graph, and let $H$ be any $2$-lift of $G$. Then
\begin{align}
Z(G\times K_2,A)\geq Z(H,A).
\end{align}
In other words, every symmetric non-negative symmetric $TN_2$-matrix is in $\mathcal{B}$.
In particular,
\begin{align}
Z(G,A)^2\leq Z(G\times K_2,A).
\end{align}
\end{Th}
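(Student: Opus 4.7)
The proof reduces the $2$-lift comparison to a Griffiths-type inequality for an Ising-type partition function on $G$ itself. Parametrize a map $\Phi\colon V(H)\to[q]$ as a pair $(\varphi_0,\varphi_1)$ with $\varphi_i(v)=\Phi(v,i)$; if $\sigma\colon E(G)\to\{0,1\}$ is the signature of the $2$-lift $H$, then
$$Z(H,A)=\sum_{\varphi_0,\varphi_1}\prod_{e=(u,v)\in E(G)}W_e\bigl(\sigma(e);\varphi_0,\varphi_1\bigr),$$
where $W_e(0)=a_{\varphi_0(u)\varphi_0(v)}a_{\varphi_1(u)\varphi_1(v)}$ (parallel edges) and $W_e(1)=a_{\varphi_0(u)\varphi_1(v)}a_{\varphi_1(u)\varphi_0(v)}$ (crossed edges).

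Consider the action of $\mathbb{Z}_2^{V(G)}$ on pairs $(\varphi_0,\varphi_1)$ that swaps $\varphi_0(v)\leftrightarrow\varphi_1(v)$ on a chosen $S\subseteq V(G)$. An edge-by-edge check shows this swap converts $W_e(\sigma(e);\cdot)$ into $W_e\bigl(\sigma(e)+\mathbb{1}[e\in\partial S];\cdot\bigr)$, so $Z(H_\sigma,A)$ depends only on the cohomology class $[\sigma]\in H^1(G;\mathbb{Z}_2)$. Partition the configurations into orbits and fix the canonical representative $(\varphi_0^*,\varphi_1^*)$ with $\varphi_0^*(v)\leq\varphi_1^*(v)$ for every $v$; setting $p_e=W_e(0;\varphi_0^*,\varphi_1^*)$ and $q_e=W_e(1;\varphi_0^*,\varphi_1^*)$, the hypothesis that $A$ is $TN_2$ applied to the $2\times 2$ submatrix on the naturally ordered rows $\{\varphi_0^*(u),\varphi_1^*(u)\}$ and columns $\{\varphi_0^*(v),\varphi_1^*(v)\}$ gives $0\leq p_e\leq q_e$.

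With the Ising-type variables $s_v=(-1)^{\epsilon(v)}\in\{\pm1\}$ and $\tau_e=(-1)^{\sigma(e)}\in\{\pm1\}$, one has
$$W_e\bigl(\sigma(e)+\partial\epsilon(e);\varphi_0^*,\varphi_1^*\bigr)=\tfrac{p_e+q_e}{2}-\tfrac{q_e-p_e}{2}\,\tau_e\,s_u s_v,$$
so, up to the stabilizer factor $2^{|\{v:\varphi_0^*(v)=\varphi_1^*(v)\}|}$, the orbit's contribution equals
$$\widetilde C_\sigma=\sum_{s\in\{\pm1\}^{V(G)}}\prod_{e=(u,v)}\Bigl(\tfrac{p_e+q_e}{2}-\tfrac{q_e-p_e}{2}\,\tau_e\,s_u s_v\Bigr).$$
Expanding this product over $F\subseteq E(G)$ and using the classical identity $\sum_s\prod_{e\in F}s_u s_v=2^{v(G)}\,\mathbb{1}[F\text{ is an even subgraph of }G]$ yields
$$\widetilde C_\sigma=2^{v(G)}\sum_{F\text{ even}}\prod_{e\notin F}\tfrac{p_e+q_e}{2}\prod_{e\in F}\tfrac{q_e-p_e}{2}\cdot\prod_{e\in F}(-\tau_e).$$

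When $\sigma\equiv 1$ (that is, $H=G\times K_2$) every $-\tau_e$ equals $+1$, hence $\prod_{e\in F}(-\tau_e)=1$ and each summand is a product of non-negative quantities by the bound $0\leq p_e\leq q_e$. For any other $\sigma$ the factor $\prod_{e\in F}(-\tau_e)\in\{\pm1\}$ while the remaining factors are unchanged, so $\widetilde C_{\sigma\equiv 1}-\widetilde C_\sigma$ is a sum of non-negative terms and $\widetilde C_\sigma\leq\widetilde C_{\sigma\equiv 1}$ term by term. Summing over all orbits (with the common $\sigma$-independent stabilizer weight that cancels on both sides) delivers $Z(H,A)\leq Z(G\times K_2,A)$. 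The main delicate point is the bookkeeping of the swap action for orbits at which $\varphi_0^*(v)=\varphi_1^*(v)$ for some vertices; once the stabilizer weights are recognized as depending only on the orbit and not on $\sigma$, the signed high-temperature expansion in the last display closes the argument.
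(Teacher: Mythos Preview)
Your proof is correct and follows essentially the same approach as the paper's proof of Theorem~\ref{neg}: both group the maps $\Phi:V(H)\to[q]$ according to the unordered pair $\{\varphi_0(v),\varphi_1(v)\}$ at each vertex, reduce each class to an Ising-type sum over $\{\pm 1\}^{V(G)}$ with edge weights coming from $2\times 2$ submatrices of $A$, and then perform the even-subgraph (high-temperature) expansion to see that the $TN_2$ hypothesis makes the all-crossed signature $\sigma\equiv 1$ (i.e.\ $H=G\times K_2$) maximize every term. The only cosmetic difference is bookkeeping: the paper explicitly splits off the set $S_0=\{v:\varphi_0(v)=\varphi_1(v)\}$ into a $\sigma$-independent prefactor $w([\varphi])$ and applies the Ising expansion on the induced subgraph $G[S_1]$, whereas you keep all vertices in the sum and absorb the redundancy into the stabilizer factor $2^{|S_0|}$; these are equivalent since edges touching $S_0$ have $p_e=q_e$ and hence drop out of the even-subgraph sum.
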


Note that for a bipartite graph $G$ the graphs $G\cup G$ and $G\times K_2$ are isomorphic. This implies that $\mathcal{B}\subseteq \mathcal{A}^b$. In particular, we get the following corollary.

\begin{Cor} \label{neg2}
Let $A=(a_{ij})$ be a $q\times q$ non-negative symmetric $TN_2$-matrix. Let $G$ be a bipartite graph, and let $H$ be any $2$-lift of $G$. Then
\begin{align}
Z(G,A)^2=Z(G\cup G,A)\geq Z(H,A).
\end{align}
\end{Cor}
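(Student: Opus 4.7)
The plan is to derive Corollary \ref{neg2} from Theorem \ref{neg} by establishing the simple graph isomorphism $G\cup G \cong G\times K_2$ whenever $G$ is bipartite, together with the elementary multiplicativity $Z(G\cup G,A)=Z(G,A)^2$.

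For the isomorphism, I would fix a bipartition $V(G)=V_0\cup V_1$ and define the relabelling $\phi:V(G)\times\{0,1\}\to V(G)\times\{0,1\}$ by $\phi(u,i)=(u,i)$ for $u\in V_0$ and $\phi(u,i)=(u,1-i)$ for $u\in V_1$. This $\phi$ is clearly a bijection. To verify that $\phi$ carries the edge set of $G\times K_2$ onto that of $G\cup G$, I would take a typical edge $\{(u,i),(v,j)\}$ of $G\times K_2$; here $uv\in E(G)$ and $i+j=1$. Since $G$ is bipartite, one of $u,v$ lies in $V_0$ and the other in $V_1$; say $u\in V_0$ and $v\in V_1$. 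Then $\phi(u,i)=(u,i)$ and $\phi(v,j)=(v,1-j)=(v,i)$, so the images share the second coordinate. Running the argument backwards shows that edges of $G\cup G$ are pulled back to edges of $G\times K_2$, giving the isomorphism.

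To finish, I would note that the quantity $Z(\cdot,A)$ is a graph invariant (it depends only on the isomorphism type of the input graph and on $A$), and that for disjoint unions the homomorphism sum factors as $Z(G\cup G,A)=Z(G,A)^2$ because the weight $\prod_{(u,v)\in E(G\cup G)}a_{\varphi(u),\varphi(v)}$ splits into a product over the two components and an independent choice of $\varphi$ on each component can be made. Combining these observations with Theorem~\ref{neg} gives
\begin{equation*}
Z(G,A)^2=Z(G\cup G,A)=Z(G\times K_2,A)\geq Z(H,A),
\end{equation*}
which is the asserted inequality. There is no genuine obstacle here; the only thing to be a little careful about is the definition of $G\times K_2$ (it is the tensor product, not the Cartesian product), so that the bipartite relabelling actually sends non-edges to non-edges as well, but this is built into the definition recalled earlier in the paper.
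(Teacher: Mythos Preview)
Your proposal is correct and follows exactly the route the paper takes: the paper simply remarks that for bipartite $G$ the graphs $G\cup G$ and $G\times K_2$ are isomorphic (hence $\mathcal{B}\subseteq\mathcal{A}^b$) and then invokes Theorem~\ref{neg}. Your only addition is the explicit construction of the isomorphism $\phi$, which is fine and makes the one-line observation in the paper precise.
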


So in case of a bipartite graph $G$ the $TP_2$ and $TN_2$-matrices produce the same inequality
\begin{align}
Z(G\cup G,A)\geq Z(H,A),
\end{align}
in spite of the fact that the general inequality (i. e. for a non-bipartite graph $G$) between $Z(G\cup G,A)$ and  $Z(G\times K_2,A)$ is reversed.
\medskip

We remark that Theorems~\ref{pos} and \ref{neg} are not the strongest theorems one can say. In fact, the main theorem of this paper is Theorem~\ref{gen} which we will discuss in Section~\ref{general setting}. The reason why we do not discuss this theorem here is that it is quite technical, one needs some preparation even to phrase it. 
\bigskip

Finally, for a positive integer $q$, and the matrix $A_q(w)$ we have
\begin{align}
Z(G,q,w)=Z(G,A_q(w)),
\end{align}
the statistical physical version of the Tutte-polynomial. This is also the partition function of the Potts-model. Unfortunately, it is neither $TP_2$-matrix, nor $TN_2$.
In spite of this, we will show that even without assuming the integrality of $q$ we have the following result. This result also appeared in Ruozzi's work \cite{Ruo2} in a more general form. He proved the same result for any $k$-lift. We will give a brief account of the work of Ruozzi in Section~\ref{Tutte-polynomial}.

\begin{Th} \label{Tutte-2}
Let $G$ be a graph, and let $H$ be any $2$-lift of $G$. Then for any $q\geq 1$ and $w\geq 0$ we have
\begin{align}
Z(G,q,w)^2=Z(G\cup G,q,w)\geq Z(H,q,w).
\end{align}
\end{Th}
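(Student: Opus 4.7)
My plan is to reduce Theorem~\ref{Tutte-2} to a cleanly-stated combinatorial inequality and then tackle that inequality.

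First I would apply the Fortuin--Kasteleyn expansion $Z(G,q,w)=\sum_{F\subseteq E(G)} q^{k(F)}w^{|F|}$ on both sides, so that $Z(G,q,w)^2=\sum_{(F_0,F_1)}q^{k(F_0)+k(F_1)}w^{|F_0|+|F_1|}$. To bring $Z(H,q,w)$ into the same index set, fix once and for all a labelling of the two lift edges above each $e\in E(G)$ as $e^{(0)},e^{(1)}\in E(H)$. The map $(F_0,F_1)\mapsto F'(F_0,F_1):=\{e^{(0)}:e\in F_0\}\cup\{e^{(1)}:e\in F_1\}$ is then a bijection $2^{E(G)}\times 2^{E(G)}\to 2^{E(H)}$ satisfying $|F'|=|F_0|+|F_1|$. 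Thus the inequality of the theorem amounts to
$$\sum_{(F_0,F_1)}\bigl(q^{k(F_0)+k(F_1)}-q^{k(F'(F_0,F_1))}\bigr)\,w^{|F_0|+|F_1|}\ \ge\ 0.$$

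One might hope the summand is non-negative term by term, but this is false in general. For example, take $G=C_3$ with a signing whose sign-product around the triangle is $+1$ (so $H$ consists of two disjoint triangles), together with $F_0=\{e_1,e_2\}$ and $F_1=\{e_1,e_3\}$: one gets $k(F_0)+k(F_1)=2$ yet $k(F')=3$, since the four edges of $F'$ concentrate with three in one fibre-triangle and only one in the other. So a grouping is needed. A natural one is by the edge profile $s\in\{0,1,2\}^{E(G)}$ given by $s_e:=\mathbf{1}[e\in F_0]+\mathbf{1}[e\in F_1]$; the weight $w^{|F_0|+|F_1|}=w^{\sum_e s_e}$ depends only on $s$, and with $s$ fixed the remaining parameter is $\tau\in\{0,1\}^{S_1}$ where $S_1:=\{e:s_e=1\}$, recording for each singleton edge whether it sits in $F_0$ or in $F_1$. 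The whole inequality then reduces, profile by profile, to showing
$$\sum_{\tau\in\{0,1\}^{S_1}}q^{k(F_0(\tau))+k(F_1(\tau))}\ \ge\ \sum_{\tau\in\{0,1\}^{S_1}}q^{k(F'(s,\tau))}.$$

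Proving this per-profile inequality is the main obstacle. The $s_e=2$ edges force both lifts into $F'$, so they contribute the full 2-lift of the common subgraph $F_0\cap F_1$, which by the standard theory of signed graphs has $2k_b+k_u$ components, where $k_b$ and $k_u$ count the balanced and unbalanced components of the restricted signing. Each singleton edge in $S_1$ then adds one further lift edge whose effect on connectivity depends on whether the closed walk it completes through the $s_e=2$ subgraph is balanced. My plan is to pair up $\tau$'s that differ by toggling one singleton edge along an unbalanced cycle of the $s_e=2$ subgraph, check that each such pair contributes a non-negative amount using $q\ge 1$ and the convexity of $t\mapsto q^t$, and handle the remaining $\tau$'s by an induction on $|S_1|$; an alternative route is to derive Theorem~\ref{Tutte-2} directly as the specialization of the paper's more general Theorem~\ref{gen}, or of Ruozzi's theorem in \cite{Ruo2}, to the matrix $A_q(w)=J+wI$, since both pieces of machinery encode exactly this combinatorial content.
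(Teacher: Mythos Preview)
Your reduction to the per-profile inequality is a natural opening move, but the proposal does not actually prove the theorem. There are two genuine gaps.

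First, the core of your argument---the per-profile inequality
\[
\sum_{\tau\in\{0,1\}^{S_1}}q^{k(F_0(\tau))+k(F_1(\tau))}\ \ge\ \sum_{\tau\in\{0,1\}^{S_1}}q^{k(F'(s,\tau))}
\]
---is never established. The sentence ``pair up $\tau$'s that differ by toggling one singleton edge along an unbalanced cycle \ldots\ and handle the remaining $\tau$'s by an induction on $|S_1|$'' is not a proof: you do not specify which edge to toggle, why such a toggle exists, why each pair contributes non-negatively, or what the inductive hypothesis would be. Convexity of $t\mapsto q^t$ is suggestive but does not by itself control the interaction between the balanced and unbalanced components once several singleton edges are present. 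As it stands this is a plan for a plan.

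Second, your fallback to Theorem~\ref{gen} is incorrect. For $q\ge 3$ the matrix $A_q(w)=J+wI$ does \emph{not} satisfy the hypothesis of that theorem. Already for $q=3$ one computes the $\binom{3}{2}\times\binom{3}{2}$ block
\[
D_1=\tfrac12\begin{pmatrix} w^2+2w & w & -w\\ w & w^2+2w & w\\ -w & w & w^2+2w\end{pmatrix},
\]
and no diagonal $\pm1$ matrix $S_1$ can make $S_1D_1S_1$ entrywise non-negative (or non-positive): the three off-diagonal sign constraints $s_{12}s_{13}\ge 0$, $s_{13}s_{23}\ge 0$, $s_{12}s_{23}\le 0$ are mutually inconsistent since their product is $-1$ while $(s_{12}s_{13}s_{23})^2=1$. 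Moreover, for non-integer $q$ there is no matrix $A_q(w)$ to speak of, so the machinery of Theorem~\ref{gen} is unavailable in principle. Citing Ruozzi's result in \cite{Ruo2} is legitimate as an attribution, but is not a proof of the theorem within this paper.

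The paper's own argument is entirely different and avoids the per-profile inequality altogether. It proceeds by induction on the number of edges via the deletion--contraction recursion $Z(G,q,w)=Z(G-e,q,w)+w\,Z(G/e,q,w)$, applied simultaneously to the pair $e,f$ of lifted edges in $G\cup G$ and in $H$. The cross terms are controlled by the FKG inequality for the random-cluster model (valid precisely for $q\ge 1$, $w\ge 0$), which yields
\[
Z(H-\{e,f\})\,Z(H/\{e,f\})\ \ge\ Z((H-e)/f)\,Z((H/e)-f),
\]
together with the observation that $H/\{e,f\}$ is again a $2$-lift of $G/e$. This positive-correlation input is the missing idea in your approach; it is what makes the non-termwise comparison go through without any profile-by-profile combinatorics.
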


\subsection{Back to extremal graph theory.} \label{egt} We have seen that  J. Kahn proved that for any $d$--regular bipartite graph $G$ on $n$ vertices one has 
\begin{align}
Z(G,A_{\mathrm{ind}})\leq Z(K_{d,d},A_{\mathrm{ind}})^{v(G)/2d},
\end{align}
and he conjectured that one can drop the condition of bipartiteness. This turned out to be indeed true: Y. Zhao showed that
\begin{align}
Z(G,A_{\mathrm{ind}})^2\leq Z(G\times K_2,A_{\mathrm{ind}}),
\end{align}
since $G\times K_2$ is bipartite, combined with J. Kahn's result this immediately gave the desired result. On the other hand, Galvin and Tetali \cite{GT} extended Kahn's result by showing that for any graph $A$ and $d$--regular bipartite graph $G$ on $n$ vertices one has 
\begin{align}
Z(G,A)\leq Z(K_{d,d},A)^{v(G)/2d}.
\end{align}
An alternative proof of this fact can be found in \cite{LZ}, this proof also works for a non-negative matrix $A$.

This prompted Y. Zhao \cite{Zhao2} to study that for which graphs $A$ one can say that
\begin{align}
Z(G,A)^2\leq Z(G\times K_2,A).
\end{align}
For all these graphs (or matrices) $A$, the function $Z(G,A)^{1/v(G)}$ is maximized at $K_{d,d}$ among $d$--regular graphs. These graphs were further studied by Sernau \cite{S}. In this sense, the extension for arbitrary $2$-lift and the counterpart for $G\cup G$ instead of $G\times K_2$ seems to be useless. Surprisingly this is not the case, because another method in extremal graph theory was developed for giving lower bounds on the quantity $Z(G,A)^{1/v(G)}$. The idea very briefly is the following: assume that for any $2$-lift $H$ of $G$ we have
\begin{align}
Z(G,A)^2=Z(G\cup G,A)\geq Z(H,A).
\end{align}
A key observation made by Linial \cite{nati} is that for any graph $G$ one can  construct a sequence of graphs $G=G_0,G_1,G_2,\dots $ such that $G_{i+1}$ is a $2$-lift of $G_i$, and the girth $g(G_i)$ tend to infinity. This observation can be combined with the above inequality as in \cite{csi1}.
This way we get two things
\begin{align}
Z(G,A)^{1/v(G)}\geq Z(G_1,A)^{1/v(G_1)} \geq Z(G_2,A)^{1/v(G_2)}\geq \dots 
\end{align}
and the sequence $g(G_i)$ tends to infinity. When the graph $G$ is $d$--regular then all $(G_i)$ are $d$--regular too, and if $G$ is bipartite then so all $G_i$. (In other words, the constructed sequence $(G_i)$ is Benjamini--Schramm convergent to the infinite $d$--regular tree in case of a $d$--regular graph $G$. In general, $(G_i)$ converges to a distribution on the rooted universal cover trees of $G$.) In particular,
\begin{align}
Z(G,A)^{1/v(G)} \geq "Z(\mathbb{T}_d,A)^{1/v(\mathbb{T}_d)}".
\end{align}
Let us summarize it as a theorem:

\begin{Th}\label{general}
(a) Let $P(G)$ be a fixed graph parameter. If for any graph $G$ and its $2$-lift $H$ we have
\begin{align}
P(G)^2\geq P(H),
\end{align}
then for any $d$--regular graph $G$ we have 
\begin{align}
P(G)^{1/v(G)}\geq "P(\mathbb{T}_d)^{1/v(\mathbb{T}_d)}".
\end{align}
(b) If for any bipartite graph $G$ and its $2$-lift $H$ we have
\begin{align}
P(G)^2\geq P(H),
\end{align}
then for any $d$--regular bipartite graph $G$ we have 
\begin{align}
P(G)^{1/v(G)}\geq "P(\mathbb{T}^b_d)^{1/v(\mathbb{T}^b_d)}".
\end{align}
\end{Th}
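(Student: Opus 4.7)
The plan is to iterate the hypothesized $2$-lift inequality along a sequence of $2$-lifts whose girth tends to infinity, produced by a classical construction of N.~Linial, and then to pass to a liminf. I treat part (a) in detail; part (b) will follow by the same argument once one observes that every $2$-lift of a bipartite graph is bipartite (the canonical partition $V_1\times\{0,1\}$, $V_2\times\{0,1\}$ of $H$ inherits the bipartition of $G$), so the entire sequence stays inside $\mathcal{G}_d^b$.

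Fix a $d$-regular graph $G$ and set $G_0=G$. Invoke Linial's construction (already referenced in Subsection~\ref{egt}) to obtain graphs $G_0,G_1,G_2,\dots$ with $G_{i+1}$ a $2$-lift of $G_i$ and $g(G_i)\to\infty$. Because a $2$-lift preserves regularity, every $G_i$ is $d$-regular, and because each $2$-lift doubles the number of vertices, $v(G_i)=2^i v(G)$.

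By hypothesis $P(G_i)^2\geq P(G_{i+1})$ for every $i$, so a trivial induction gives $P(G)^{2^i}\geq P(G_i)$. Taking $v(G_i)$-th roots (the natural setting $P\geq 0$ of partition functions is implicit throughout the paper) yields
\begin{equation}
P(G)^{1/v(G)} \;=\; \bigl(P(G)^{2^i}\bigr)^{1/v(G_i)} \;\geq\; P(G_i)^{1/v(G_i)} \qquad\text{for every } i.
\end{equation}
Since $(G_i)$ is a sequence of $d$-regular graphs with $g(G_i)\to\infty$, taking $\liminf$ and then the infimum over all such sequences gives
\begin{equation}
P(G)^{1/v(G)} \;\geq\; \liminf_{i\to\infty} P(G_i)^{1/v(G_i)} \;\geq\; \inf_{(H_i)}\;\liminf_{\substack{g(H_i)\to\infty \\ H_i\in\mathcal{G}_d}} P(H_i)^{1/v(H_i)} \;=\; "P(\mathbb{T}_d)^{1/v(\mathbb{T}_d)}",
\end{equation}
which is (a). Part (b) is identical with $\mathcal{G}_d^b$ in place of $\mathcal{G}_d$, using the observation above that bipartiteness is preserved under $2$-lifts.

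The only non-elementary step is Linial's construction of a tower of $2$-lifts with girth tending to infinity, which I would invoke as a black box. Everything else—telescoping $P(G_i)^2\geq P(G_{i+1})$ into $P(G)^{2^i}\geq P(G_i)$, keeping track of $v(G_i)=2^i v(G)$, and checking that regularity and bipartiteness descend through the tower—is routine bookkeeping. No separate argument is needed to relate the constructed sequence to the tree $\mathbb{T}_d$ either, since the quoted quantity $"P(\mathbb{T}_d)^{1/v(\mathbb{T}_d)}"$ is by \emph{definition} an infimum over all such girth-diverging $d$-regular sequences, and the sequence $(G_i)$ furnished by Linial is one of the competitors in that infimum.
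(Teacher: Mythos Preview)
Your argument is correct and is essentially identical to the paper's own justification, which appears in the paragraphs immediately preceding the statement of Theorem~\ref{general}: construct Linial's tower of $2$-lifts $G=G_0,G_1,\dots$ with $g(G_i)\to\infty$, iterate the hypothesis to obtain the monotone chain $P(G)^{1/v(G)}\geq P(G_1)^{1/v(G_1)}\geq\cdots$, observe that regularity and bipartiteness are preserved, and then compare with the defining infimum. Your write-up is in fact more explicit than the paper's, which treats the theorem as a summary of the preceding discussion rather than giving a separate proof.
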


\bigskip

If it were true that
\begin{align}
\lim_{i\to \infty} Z(G_i,A)^{1/v(G_i)}
\end{align}
exists and we can compute it then we would get a general lower bound for $Z(G,A)^{1/v(G)}$. 
Interestingly, this is a well-studied problem, especially in locally tree-like graphs, exactly the case we need. For instance, it is known that the limit exists and it is computed for $A_{\mathrm{Is(\beta)}}$ if $\beta>0$, or $\beta<0$ and $G_i$'s are bipartite, or for $A_{\mathrm{ind}}$ again when $G_i$'s are bipartite. For these models these are exactly the cases when we were able to prove an inequality of type $Z(G,A)^2=Z(G\cup G,A)\geq Z(H,A)$. In Section~\ref{limit} we will return to this problem, where we gather a few known results.
\bigskip

This paper is organized as follows. In Section~\ref{warm-up} we study the number of independent sets and matchings of $2$-lifts, in particular we prove Theorem~\ref{Th-B}. In Section~\ref{posneg} we prove Theorem~\ref{pos} and Theorem~\ref{neg}. In Section~\ref{general setting} we give an extension of these theorems. In Section~\ref{weights} we extend our results to vertex-weighted partition functions. In Section~\ref{Tutte-polynomial}  we prove Theorem~\ref{Tutte-2} and we also give an account to Ruozzi's work. In Section~\ref{other work} we elaborate how our work is related to some known ideas, most notably to the work of Y. Zhao and L. Sernau. In Section~\ref{limit} we summarize some known results on the limit values of the sequence $(Z(G_i,A)^{1/v(G_i)})_{i=1}^{\infty}$ and combine it with our results. In Section~\ref{the end} we finish the paper with some remarks and open problems.

\section{Warm-up: independent sets and matchings} \label{warm-up}

In this section we consider the case $A=A_{\mathrm{ind}}$, i. e., we are counting independent sets. This section is completely elementary.

\begin{Th} \label{independent} Let $G$ be a graph, and let $H$ be a $2$-lift of $G$. Then
\begin{align}
i_k(H)\leq i_k(G\times K_2),
\end{align}
where $i_k(.)$ denotes the number of independent sets of size $k$.
\end{Th}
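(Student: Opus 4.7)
The plan is to classify each independent set $S\subseteq V(H)$ of size $k$ according to how it meets the two fibres of the lift. For each $v\in V(G)$, put $v$ into $A$ if both $(v,0),(v,1)\in S$, into $B$ if exactly one of them is in $S$, and otherwise leave it out; then $|S|=2|A|+|B|=k$. I first observe two constraints on $(A,B)$ that hold for every $2$-lift $H$: (i) $A$ must be independent in $G$, since an edge of $G$ inside $A$ would force some edge of $H$ between vertices of $S$; and (ii) there can be no edge of $G$ from $A$ to $B$, by the same pigeonhole. These are precisely the obstructions that do not depend on the particular signing of the lift.

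Next I would fix a valid pair $(A,B)$ and count completions to an independent set of $H$. This amounts to choosing a function $\sigma:B\to\{0,1\}$ so that, for each edge $(u,v)\in E(G[B])$, the vertices $(u,\sigma(u))$ and $(v,\sigma(v))$ are non-adjacent in $H$. Encoding the $2$-lift by a signing $s:E(G)\to\{+,-\}$, with $+$ marking the parallel pair and $-$ the crossing pair, this condition becomes a single $\mathbb{F}_2$-linear equation $\sigma(u)+\sigma(v)\equiv \varepsilon(uv)\pmod 2$, where $\varepsilon(uv)\in\{0,1\}$ depends only on $s(uv)$. Such a system on the vertex set $B$ has either $0$ solutions or exactly $2^{c(G[B])}$ solutions, where $c(G[B])$ is the number of connected components of the induced subgraph $G[B]$. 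For the particular lift $H=G\times K_2$ every edge carries sign $-$, so each equation reduces to $\sigma(u)=\sigma(v)$, the system is automatically consistent on every component, and the number of completions is exactly $2^{c(G[B])}$.

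Summing over all admissible pairs $(A,B)$ with $2|A|+|B|=k$ then gives
\[
i_k(H)\;=\;\sum_{(A,B)}\#\bigl\{\sigma\text{ valid for }H\bigr\}\;\le\;\sum_{(A,B)}2^{c(G[B])}\;=\;i_k(G\times K_2),
\]
which is the claimed inequality. I expect the main difficulty to lie not in any single ingenuity but in the bookkeeping of the counting step: one has to verify that the triple $(A,B,\sigma)$ recovers $S$ uniquely, that conditions (i) and (ii) really exhaust the signing-free obstructions so the right-hand sum is not over-counted, and that the $G\times K_2$ count hits the upper bound $2^{c(G[B])}$ exactly, which is precisely what the \emph{constant on each component of $G[B]$} description of valid $\sigma$ provides.
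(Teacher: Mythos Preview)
Your proof is correct and follows essentially the same approach as the paper's: your pair $(A,B)$ is exactly the paper's projected configuration $R$ (double-vertices versus single vertices), your constraints (i) and (ii) are what make each isolated double-vertex a component of $R$ on its own, and your count $2^{c(G[B])}$ coincides with the paper's $2^{k(R)}$. The only difference is presentation---you make the $\mathbb{F}_2$-linear system on $B$ explicit, whereas the paper phrases the same dichotomy (either $0$ or $2^{c}$ preimages) as ``if we know the inverse image of one vertex then we immediately know the inverse images of all other vertices.''
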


\begin{Rem} This statement gives a generalization of Yufei Zhao's result, namely 
\begin{align}
i_k(G\cup G)\leq i_k(G\times K_2).
\end{align}
On the other hand, if $G$ is bipartite then $G\times K_2=G\cup G$ in which case it gives that
\begin{align}
i_k(G \cup G)\geq i_k(H)
\end{align}
for any $2$-lift $H$. 
\end{Rem}

\begin{proof}
Let $I$ be any independent set of a $2$-lift of $G$. Let us consider the projection of $I$ to $G$, then it will consist of vertices and "double-vertices" (i.e, when two vertices map to the same vertex). Let $\mathcal{R}$ be the set of these configurations. Then 
\begin{align}
i_k(H)=\sum_{R \in \mathcal{R}}|\phi_H^{-1}(R)|
\end{align}
and
\begin{align}
i_k(G\times K_2)=\sum_{R \in \mathcal{R}}|\phi_{G\times  K_2}^{-1}(R)|,
\end{align}
where $\phi_H$ and $\phi_{G\times K_2}$ are the projections from $H$ and $G\times K_2$ to $G$. Note that
\begin{align}
|\phi_{G\times K_2}^{-1}(R)|=2^{k(R)},
\end{align}
where $k(R)$ is the number of connected components of $R$ different from a double-vertex. Indeed, in each component we can lift the vertices such a way that the image belongs to exactly one bipartite class. The projection of a  double-vertex must be a connected component on its own. On the other hand,
\begin{align}
|\phi_{H}^{-1}(R)|\leq 2^{k(R)},
\end{align}
since in each component if we know the inverse image of one vertex then we immediately know the inverse images of all other vertices. Clearly, there is no equality in general. 
Hence 
\begin{align}
|\phi_{H}^{-1}(R)|\leq |\phi_{G\times K_2}^{-1}(R)|\end{align}
and consequently,
\begin{align}
i_k(H)\leq i_k(G\times K_2).
\end{align}
\end{proof}

\begin{proof}[Proof of Theorem~\ref{Th-B}] This immediately follows from Theorem~\ref{independent} and Theorem~\ref{general}.
\end{proof}

The following theorem is not in the scope of this paper, but we mention it for two reasons. Its proof is practically the same as the above proof as we will see. This theorem generalizes the well-known fact that
\begin{align}
m_k(G\times K_2)\geq m_k(G\cup G),
\end{align}
and in case of bipartite graphs we have
\begin{align}
m_k(G \cup G)\geq m_k(H)
\end{align}
for any $2$-lift $H$ which was proved in \cite{csi1}.

\begin{Th} Let $G$ be a graph, and let $H$ be a $2$-lift of $G$. Then
\begin{align}
m_k(H)\leq m_k(G\times K_2),
\end{align}
where $m_k(.)$ denotes the number of matchings of size $k$.
\end{Th}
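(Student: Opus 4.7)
The plan is to imitate the proof of Theorem~\ref{independent}, with matchings in place of independent sets. Given a matching $M$ in a $2$-lift $H$ of $G$, I record its projection to $G$ as the function $R\colon E(G)\to\{0,1,2\}$ where $R(e)$ is the number of lifts of $e$ contained in $M$. Because $M$ is a matching and each vertex $v\in V(G)$ has only two preimages $(v,0),(v,1)$, the multigraph $R$ has maximum degree $2$; moreover if $R(e)=2$ then both lifts of each endpoint of $e$ are already used by $e$ itself, forcing the doubled edge to be an isolated component of $R$. Consequently every component of $R$ is either a doubled edge, a simple path, or a simple cycle whose edges all carry multiplicity one. Writing $\mathcal{R}_k$ for the family of such projections with $\sum_e R(e)=k$ and $\phi_H,\phi_{G\times K_2}$ for the induced projection maps from matchings to $\mathcal{R}_k$, one has
$$m_k(H)=\sum_{R\in\mathcal{R}_k}|\phi_H^{-1}(R)|,\qquad m_k(G\times K_2)=\sum_{R\in\mathcal{R}_k}|\phi_{G\times K_2}^{-1}(R)|,$$
so it suffices to show $|\phi_H^{-1}(R)|\le|\phi_{G\times K_2}^{-1}(R)|$ for every $R\in\mathcal{R}_k$.

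I would then count fibres component by component. A doubled-edge component contributes a factor of exactly $1$ to the fibre in any $2$-lift, since both lifts of $e$ are forced into $M$. For a path or cycle component $C$ of $R$, a matching lifting is determined by choosing the lift of one edge and propagating around $C$: at each internal vertex $v_i$ the lifts of $e_{i-1}$ and $e_i$ in $M$ must occupy different copies of $v_i$, and this requirement, together with the type (parallel or crossing) of $e_i$ in the $2$-lift $H$, pins down the lift of $e_i$. Hence each non-doubled component admits at most two liftings to a matching in $H$. In $G\times K_2$ every edge is crossing, so the propagation never runs into a contradiction---along paths trivially, and around cycles because every edge contributes the same parity to the propagation---so each non-doubled component admits exactly two liftings. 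Denoting by $k(R)$ the number of non-doubled components of $R$, this yields $|\phi_H^{-1}(R)|\le 2^{k(R)}=|\phi_{G\times K_2}^{-1}(R)|$, and summing over $\mathcal{R}_k$ finishes the proof.

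The only delicate point---essentially identical to what appears in the independent-set argument---is the propagation around a cycle component: in a general $2$-lift the sum of edge-types around the cycle may fail the parity condition needed for consistency, in which case the corresponding fibre drops from $2$ to $0$. This is precisely where a general $2$-lift $H$ can fall short of $G\times K_2$ but never overtake it, and it is also the place where one must verify carefully that doubled edges indeed form isolated components (so that they contribute a deterministic factor $1$ and are excluded from the count $k(R)$). Both verifications are routine, and the rest of the argument is a direct transcription of the proof of Theorem~\ref{independent}.
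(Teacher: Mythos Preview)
Your proposal is correct and follows essentially the same approach as the paper's own proof: project the matching to $G$, classify the components of the projection as doubled edges, paths, or cycles, and compare fibre sizes componentwise to obtain $|\phi_H^{-1}(R)|\le 2^{k(R)}=|\phi_{G\times K_2}^{-1}(R)|$. Your write-up is in fact more detailed than the paper's---you spell out why doubled edges form isolated components and why the propagation around a cycle is always consistent in $G\times K_2$---but the underlying argument is identical.
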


\begin{proof}
Let $M$ be any matching of a $2$-lift of $G$. Let us consider the projection of $M$ to $G$, then it will consist of paths, cycles and "double-edges" (i.e, when two edges project to the same edge). Let $\mathcal{R}$ be the set of these configurations. Then 
\begin{align}
m_k(H)=\sum_{R \in \mathcal{R}}|\phi_H^{-1}(R)|
\end{align}
and
\begin{align}
m_k(G\times K_2)=\sum_{R \in \mathcal{R}}|\phi_{G\times  K_2}^{-1}(R)|,
\end{align}
where $\phi_H$ and $\phi_{G\times K_2}$ are the projections from $H$ and $G\times K_2$ to $G$. Note that
\begin{align}
|\phi_{G\times K_2}^{-1}(R)|=2^{k(R)},
\end{align}
where $k(R)$ is the number of paths and cycles of $R$. Indeed, in each path or cycle we can lift the edges in two different ways. The projection of a  double-edge is naturally unique. On the other hand,
\begin{align}
|\phi_{H}^{-1}(R)|\leq 2^{k(R)},
\end{align}
since in each path or cycle if we know the inverse image of one edge then we immediately know the inverse images of all other edges. Clearly, there is no equality for cycles in general. 
Hence 
\begin{align}
|\phi_{H}^{-1}(R)|\leq |\phi_{G\times K_2}^{-1}(R)|
\end{align}
and consequently,
\begin{align}
m_k(H)\leq m_k(G\times K_2).
\end{align}
\end{proof}

\section{Proofs of Theorems~\ref{pos} and ~\ref{neg}.} \label{posneg}

In this section we prove Theorems~\ref{pos} and ~\ref{neg}. We will need the following lemma.

\begin{Lemma} \label{pos-lemma} For every edge $e$ of a graph $G$ let $A(e)=\left( \begin{array}{cc} a_{11}(e) & a_{12}(e) \\ a_{21}(e) & a_{22}(e) \end{array}\right)$ be a non-negative matrix.
Let $f_e(x): \{-1,1\}\to \mathbb{R}$ defined as follows:
\begin{align}
f_e(x)=\left\{\begin{array}{cc} a_{11}(e)a_{22}(e) & \mbox{if}\ x=1 \\
                                a_{12}(e)a_{21}(e) & \mbox{if}\ x=-1 \\
               \end{array} \right..
\end{align}
							
Let $G$ be a graph. Assume that for all edge $e=(u,v)$ we have $\det(A(e))\geq 0$. 
Then for any $\underline{s}=(s_{u,v})_{(u,v)\in E(G)}\in \{-1,1\}^{E(G)}$ we have
\begin{align}
\sum_{\underline{\sigma} \in \{-1,1\}^{V(G)}}\prod_{(u,v)\in E(G)}f_e(s_{u,v}\sigma_u\sigma_v)\leq \sum_{\underline{\sigma} \in \{-1,1\}^{V(G)}} \prod_{(u,v)\in E(G)}f_e(\sigma_u\sigma_u),
\end{align}
where $\underline{\sigma}=(\sigma_u)_{u\in V(G)}$.
On the other hand, if for all edge $e=(u,v)\in E(G)$ we assume that $\det(A(e))\leq 0$ then
\begin{align}
\sum_{\underline{\sigma} \in \{-1,1\}^{V(G)}}\prod_{(u,v)\in E(G)}f_e(s_{u,v}\sigma_u\sigma_v)\leq \sum_{\underline{\sigma} \in \{-1,1\}^{V(G)}} \prod_{(u,v)\in E(G)}f_e(-\sigma_u\sigma_v).
\end{align}

\end{Lemma}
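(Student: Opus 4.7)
The plan is to linearize each factor $f_e$ in its $\pm 1$ argument, expand the edge product, and reduce everything to a sign-counting identity over even subgraphs of $G$. First I would write $f_e(x)=\alpha_e+\beta_e x$ for $x\in\{-1,+1\}$, where $\alpha_e=(a_{11}(e)a_{22}(e)+a_{12}(e)a_{21}(e))/2$ and $\beta_e=(a_{11}(e)a_{22}(e)-a_{12}(e)a_{21}(e))/2=\det A(e)/2$. The first coefficient is always non-negative, since $A(e)$ has non-negative entries, while the sign of $\beta_e$ matches the sign of $\det A(e)$.

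Next I would substitute this into the product over edges and expand:
\begin{align*}
\prod_{(u,v)\in E(G)} f_e(s_{u,v}\sigma_u\sigma_v) = \sum_{F\subseteq E(G)} \Bigl(\prod_{e\notin F}\alpha_e\Bigr) \Bigl(\prod_{e\in F}\beta_e s_e\Bigr) \prod_{(u,v)\in F}\sigma_u\sigma_v.
\end{align*}
Summing over $\underline{\sigma}\in\{-1,1\}^{V(G)}$ and swapping the order of summation, the inner sum $\sum_{\underline{\sigma}}\prod_{(u,v)\in F}\sigma_u\sigma_v=\sum_{\underline{\sigma}}\prod_{v\in V(G)}\sigma_v^{\deg_F(v)}$ equals $2^{v(G)}$ if $F$ is an even subgraph (every vertex has even degree in $F$) and vanishes otherwise. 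Writing $\mathcal{C}(G)$ for the family of even subgraphs, this yields the clean formula
\begin{align*}
\sum_{\underline{\sigma}} \prod_{e} f_e(s_{u,v}\sigma_u\sigma_v) = 2^{v(G)} \sum_{F\in\mathcal{C}(G)} \Bigl(\prod_{e\notin F}\alpha_e\Bigr) \Bigl(\prod_{e\in F}\beta_e\Bigr) \prod_{e\in F} s_e.
\end{align*}

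With this identity in hand, both inequalities become immediate term-by-term sign checks. In the $TP_2$ case $\beta_e\ge 0$, so each summand on the right is non-negative when $\underline{s}\equiv +1$; for an arbitrary $\underline{s}$, the extra factor $\prod_{e\in F}s_e\in\{-1,+1\}$ can only decrease each term, and a term-by-term comparison delivers the first inequality. In the $TN_2$ case $\beta_e\le 0$; I would rewrite $\beta_e s_e=(-\beta_e)(-s_e)$ with $-\beta_e\ge 0$ and compare to the choice $\underline{s}\equiv -1$, which makes every $-s_e=+1$, so the same term-wise comparison yields the second inequality. The main obstacle I anticipate is spotting the cycle-space identity that kills all non-even subsets of edges; once this is observed the remainder is purely algebraic bookkeeping, with only a mild care needed in the $TN_2$ case to keep track of the global minus signs.
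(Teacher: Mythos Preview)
Your proof is correct and is essentially identical to the paper's own argument: the paper also linearizes $f_e(x)=c_1(e)+xc_2(e)$ with $c_1,c_2$ exactly your $\alpha_e,\beta_e$, expands the product over edges, uses the same parity identity to kill all non-even subgraphs, and then does the same term-by-term sign comparison in both the $TP_2$ and $TN_2$ cases. Your handling of the $TN_2$ case via the substitution $\beta_e s_e=(-\beta_e)(-s_e)$ is in fact a slightly cleaner phrasing of the paper's final paragraph.
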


\begin{proof} Note that
\begin{align}
f_e(x)=\frac{a_{11}(e)a_{22}(e)+a_{12}(e)a_{21}(e)}{2}+x\frac{a_{11}(e)a_{22}(e)-a_{12}(e)a_{21}(e)}{2}.
\end{align}
For sake of simplicity let us call 
\begin{align}
c_1(e)=\frac{a_{11}(e)a_{22}(e)+a_{12}(e)a_{21}(e)}{2}\ \ \mbox{and}\ \ c_2(e)=\frac{a_{11}(e)a_{22}(e)-a_{12}(e)a_{21
}(e)}{2}.
\end{align}
Then
\begin{align}
\sum_{\underline{\sigma} \in \{-1,1\}^{V(G)}}\prod_{(u,v)\in E(G)}f_e(s_{u,v}\sigma_u\sigma_v) &=
 \sum_{\underline{\sigma} \in \{-1,1\}^{V(G)}}\prod_{(u,v)\in E(G)}(c_1(e)+s_{u,v}\sigma_u\sigma_vc_2(e))=\\
&=\sum_{\underline{\sigma} \in \{-1,1\}^{V(G)}}\sum_{F\subseteq E(G)}\left(\prod_{(u,v)\notin F}c_1(e)\right)
\left(\prod_{(u,v)\in F}(s_{u,v}\sigma_u\sigma_vc_2(e))\right)=\\
&=\sum_{F\subseteq E(G)}\left(\prod_{(u,v)\notin F}c_1(e)\right)\left(\prod_{(u,v)\in F}s_{u,v}c_2(e)\right)\left(\sum_
{\underline{\sigma} \in \{-1,1\}^{V(G)}}\prod_{(u,v) \in F}\sigma_u\sigma_v\right).
\end{align}
Let $G_F$ be the graph with vertex set $V(G)$ and edge set $F$. Let $O(G_F)$ be the set of vertices which have odd degree in the graph. Then
\begin{align}
\sum_{\underline{\sigma} \in \{-1,1\}^{V(G)}}\prod_{(u,v)\in F}\sigma_u\sigma_v=
\sum_{\underline{\sigma} \in \{-1,1\}^{V(G)}}\prod_{u\in O(G_F)}\sigma_u=\left\{ \begin{array}{ll} 2^{|V(G)|} & \mbox{if}\ O(G_F)=\emptyset ,\\
0 & \mbox{otherwise}. \\
\end{array} \right. 
\end{align}
Hence
\begin{align}
\sum_{\underline{\sigma} \in \{-1,1\}^{V(G)}}\prod_{(u,v)\in E(G)}f_e(s_{u,v}\sigma_u\sigma_v)=2^{|V(G)|} \sum_{F\subseteq E(G) \atop O(G_F)=\emptyset}\prod_{(u,v)\notin F}c_1(e)\prod_{(u,v)\in F}s_{u,v}c_2(e).
\end{align}
If $c_2(e)\geq 0$ for all $e\in E(G)$ then it is clearly maximized when $(s_{u,v})_{(u,v)\in E(G)}=\underline{1}$. If $c_2(e)\leq 0$ for all $e\in E(G)$, then $\prod_{(i,j)\in F}s_{u,v}c_2(e)$ is positive if all $s_{u,v}=-1$, so in this case the function is maximized at $(s_{u,v})_{(u,v)\in E(G)}=-\underline{1}$.
\end{proof}

\begin{Rem} If $c_2(e)\leq 0$ for all $e\in E(G)$, but $G$ is bipartite then observe that the graph $G_F$ has even number of edges as it is a bipartite Eulerian graph. Hence $\prod_{(u,v)\in F}c_2(e)\geq 0$ for all such graphs, hence the function is again maximized when $(s_{u,v})_{(u,v)\in E(G)}=\underline{1}$, and at the same time at $-\underline{1}$.
\end{Rem}

\begin{proof}[Proof of Theorem~\ref{pos} and Theorem~\ref{neg}.] Before we start proving the theorem it is worth introducing a few notations. If $H$ is a fixed $2$-lift of $G$ then let
\begin{align}
s_{u,v}=\left\{ \begin{array}{ll} 1 & \mbox{if}\ ((u,0),(v,0))\ \mbox{and} \ ((u,1),(v,1))\in E(H), \\
-1 & \mbox{if}\ ((u,0),(v,1))\ \mbox{and} \ ((u,1),(v,0))\in E(H). \\
\end{array} \right. 
\end{align}
For a $\varphi: V(H)\to [q]$ let 
\begin{align}
S_0=\{u\in V(G)\ | \varphi((u,0))=\varphi((u,1))\}
\end{align}
and 
\begin{align}
S_1=\{u\in V(G)\ | \varphi((u,0))\neq \varphi((u,1))\}.
\end{align}
For a $\varphi: V(H)\to [q]$ and an $u\in S_1$ let
\begin{align}
\sigma_{\varphi}(u)=\left\{ \begin{array}{ll} 1 & \mbox{if}\ \varphi((u,0))<\varphi((u,1)), \\
-1 & \mbox{if}\ \varphi((u,0))>\varphi((u,1)) \\
\end{array} \right. 
\end{align}
Finally for $\varphi: V(H)\to [q]$ let $[\varphi]$ denote the equivalence class of maps $\varphi$ for which the set system $\{\varphi((u,0)),\varphi((u,1))\}$ for all $u\in V(G)$ is the same.
Furthermore, for $e=(u,v)$ and $\varphi: V(H)\to [q]$ let $t_1=\min(\varphi((u,0)),\varphi((u,1))),t_2=\max(\varphi((u,0)),\varphi((u,1)))$,
$s_1=\min(\varphi((v,0)),\varphi((v,1))$, and $s_2=\max(\varphi((v,0)),\varphi((v,1)))$
\begin{align}
f_{e,\varphi}(x)=\left\{\begin{array}{cc} a_{t_1,s_1}a_{t_2,s_2} & \mbox{if}\ x=1 \\
                                a_{t_1,s_2}a_{t_2,s_1} & \mbox{if}\ x=-1 \\
               \end{array} \right..
\end{align}

In other words, $f_{e,\varphi}(x)$ is the function belonging to the matrix $A_{e,\varphi}=\left( \begin{array}{cc} a_{t_1,s_1} & a_{t_1,s_2} \\ a_{t_2,s_1} & a_{t_2,s_2}\end{array}\right)$ in the lemma. Clearly, $f_{e,\varphi}(x)$ depends only on $[\varphi]$ so we will write $f_{e,[\varphi]}(x)$ instead of it.

With these notations we have
\begin{align}
Z(H,A) &=\sum_{\varphi:V(G)\to [q]}\prod_{(u',v')\in E(H)}a_{\varphi(u'),\varphi(v')}=\\
&=\sum_{[\varphi]}\sum_{\varphi\in [\varphi]}\prod_{(u',v')\in E(H)}a_{\varphi(u'),\varphi(v')}=\\
&=\sum_{[\varphi]}\sum_{\varphi\in [\varphi]}\prod_{(u',v')\in E(H) \atop \{u',v'\}\cap S_0\neq \emptyset}a_{\varphi(u'),\varphi(v')}\prod_{(u',v')\in E(H) \atop u',v'\in S_1}a_{\varphi(u'),\varphi(v')}=\\
&=\sum_{[\varphi]}\sum_{\varphi\in [\varphi]}\prod_{(u',v')\in E(H) \atop \{u',v'\}\cap S_0\neq \emptyset}a_{\varphi(u'),\varphi(v')}\prod_{e=(u,v)\in E(G)}f_{e,[\varphi]}(s_{u,v}\sigma_{\varphi}(u)\sigma_{\varphi}(v)).
\end{align}
Note that the term
\begin{align}
\prod_{(u',v')\in E(H) \atop \{u',v'\}\cap S_0\neq \emptyset}a_{\varphi(u'),\varphi(v')}
\end{align}
only depends on $[\varphi]$, but it does not depend on the $2$-lift $H$ we consider. So we can denote it by $w([\varphi])$. Then
\begin{align}
Z(H,A)=\sum_{[\varphi]}w([\varphi])\sum_{\varphi\in [\varphi]}\prod_{e=(u,v)\in E(S_1)}f_{e,[\varphi]}(s_{u,v}\sigma_{\varphi}(u)\sigma_{\varphi}(v)).
\end{align}

Clearly, we have $w([\varphi])\geq 0$. By the lemma we know that
\begin{align}
\sum_{\varphi\in [\varphi]}f_{e,[\varphi]}(s_{u,v}\sigma_{\varphi}(u)\sigma_{\varphi}(v))=\sum_{(\sigma_{\varphi}(u))_u\in \{\pm 1\}^{S_1}}\prod_{(i,j)\in E(S_1)}f_{e,[\varphi]}(s_{u,v}\sigma_{\varphi}(u)\sigma_{\varphi}(v))
\end{align}
is maximized at $(s_{u,v})=\underline{1}$ if $\det(A_{e,\varphi})\geq 0$ for all $e$. This means that $Z(H,A)$ is maximized when $H=G\cup G$. On the other hand, if $\det(A_{e,\varphi})\leq 0$ for all $e$ then the above function is maximized at $(s_{u,v})=-\underline{1}$ which means that 
$Z(H,A)$ is maximized when $H=G\times K_2$.
\end{proof}

\section{More general setting} \label{general setting}

In this section we will frequently use the following definition.

\begin{Def} A matrix-decorated graph is a graph $G$ together with a symmetric matrix $A_e$ of size $q\times q$ assigned to every edge $e$. 
We will denote a decorated matrix by $(G|A_e)$.

The homomorphism function of a decorated graph $(G|A_e)$ is defined as
\begin{align}
\h(G|A_e)=\sum_{\varphi: V(G)\to [q]}\prod_{e\in E(G)}A_e(\varphi(u),\varphi(v)).
\end{align}
\end{Def}

The point of this definition is that instead of considering a $2$-lift $H$ of a graph $G$, and its homomorphisms into a matrix $A$, we will consider the decoration of $G$ with two matrices introduced later, $A^=$ and $A^{\times}$, such that $\h(G|A_e)=Z(H,A)$.

Indeed, let $A^=$ be the following matrix of size $q^2\times q^2$: its rows and columns are denoted by the ordered pairs $(i,j)$, where $i,j\in [q]$, and
\begin{align}
A^{=}((i,j),(k,l))=A(i,k)\cdot A(j,l).
\end{align}
In other words, $A^{=}$ is simply the tensor product $A\otimes A$. Let $A^{\times}$ be the following matrix of size $q^2\times q^2$: its rows and columns are again denoted by the ordered pairs $(i,j)$, where $i,j\in [q]$, and
\begin{align}
A^{\times}((i,j),(k,l))=A(i,l)\cdot A(j,k).
\end{align}
So $A^{\times}$ is the skew tensor product of $A$ with itself.

Now if $H$ is a $2$-lift of $G$, then write the matrix $A^{=}$ to those edges of $G$, where the edges of $H$ are $((u,0),(v,0))$ and $((u,1),(v,1))$, and write $A^{\times}$ to those edges of $G$, where the edges of $H$ are $((u,1),(v,0))$ and $((u,0),(v,1))$. Then if we consider a map $\varphi: H\to A$, then we can introduce $\tilde{\varphi}:V(G)\to [q^2]$ such that $\tilde{\varphi}(u)=(\varphi((u,0)),\varphi((u,1))$. Then
\begin{align}
Z(H,A)=\sum_{\varphi:V(H)\to [q]}\prod_{(u',v')\in E(H)}A(\varphi(u'),\varphi(v'))=\sum_{\tilde{\varphi}:V(G)\to [q^2]}\prod_{(u,v)\in E(G)}A_e(\varphi(u),\varphi(v))=\h(G|A_e),
\end{align}
where $A_e$ is $A^=$ or $A^{\times}$ according to the above rule.

Next let us introduce the matrices $E$ and $D$:
\begin{align}
E=\frac{1}{2}\left(A^=+A^{\times}\right)\, \, \, \mbox{and}\, \, \, D=\frac{1}{2}\left(A^=-A^{\times}\right).
\end{align}
Let us write $E$ and $D$ as block matrices with the convention that the first $q$ rows and columns correspond to the elements $(i,i)$, where $i\in [q]$, the next $\binom{q}{2}$ rows and columns correspond to the elements $(i,j)$, where $i<j$, $i,j\in [q]$, and the last $\binom{q}{2}$ rows and columns correspond to the elements $(j,i)$, where $i<j$, $i,j\in [q]$. Then
\begin{align}
E=\left( \begin{array}{ccc} E_{0} & E_{01} & E_{01} \\ E_{01}^T & E_1 & E_1 \\ E_{01}^T & E_1 & E_1 \end{array}\right) \, \, \, \mbox{and} \, \, \, D=\left( \begin{array}{ccc} 0 & 0 & 0 \\ 0 & D_1 & -D_1 \\ 0 & -D_1 & D_1 \end{array}\right).
\end{align}

Having these notations we are able to phrase the main theorem of this paper.

\begin{Th} \label{gen} Let $A$ be a non-negative symmetric matrix of size $q\times q$, and let the matrices $A^=,A^{\times},D,E$ be defined as above. 
\medskip

\noindent (a) If there exists a diagonal matrix $S$ of size $q^2\times q^2$ with entries $\pm 1$ in the diagonal such that 
$SDS$ has only non-negative entries then for any $2$-lift $H$ of $G$ we have
\begin{align}
Z(G\cup G,A)\geq  Z(H,A).
\end{align}
\medskip

\noindent (b) If there exists a diagonal matrix $S$ of size $q^2\times q^2$ with entries $\pm 1$ in the diagonal such that 
$SDS$ has only non-positive entries then for any $2$-lift $H$ of $G$ we have
\begin{align}
Z(G\times K_2,A)\geq  Z(H,A).
\end{align}

\end{Th}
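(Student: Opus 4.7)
The plan is to extend the approach used in Theorems~\ref{pos} and \ref{neg} by combining the decomposition $A^= = E + D$, $A^\times = E - D$ with the sign matrix $S$ provided by the hypothesis. Since each edge of the $2$-lift $H$ is labelled by $s_e = +1$ (parallel, matrix $A^=$) or $s_e = -1$ (crossing, matrix $A^\times$), the edge decoration at $e$ equals $E + s_e D$, and expanding the product yields the signed decomposition
\begin{align*}
Z(H, A) = \sum_{F \subseteq E(G)} \Bigl(\prod_{e \in F} s_e\Bigr) T_F, \quad T_F := \sum_{\varphi: V(G) \to [q^2]} \prod_{e \in F} D(\varphi(u), \varphi(v)) \prod_{e \notin F} E(\varphi(u), \varphi(v)).
\end{align*}
In particular $Z(G \cup G, A) = \sum_F T_F$ and $Z(G \times K_2, A) = \sum_F (-1)^{|F|} T_F$. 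Part~(a) therefore follows from $T_F \geq 0$ for every $F$, because then $Z(G\cup G,A)-Z(H,A)=\sum_F(1-\prod_e s_e)T_F\geq 0$; and swapping $D$ for $-D$ interchanges $A^=$ with $A^\times$ and turns hypothesis~(b) into hypothesis~(a) for $-D$. So it suffices to prove $T_F \geq 0$ assuming~(a).

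To show $T_F \geq 0$, I would group the configurations $\varphi$ by the equivalence class $[\varphi]$ that records, for each vertex $v$, only the unordered pair $\{\varphi(v)_1,\varphi(v)_2\}$; write $\overline{\varphi(v)}$ for this pair. Partition $V(G)$ into $S_0 := \{v : \varphi(v)_1 = \varphi(v)_2\}$ and $S_1 := V(G) \setminus S_0$. The block structure $D((i,i),\cdot) \equiv 0$ kills the contribution of any class meeting $V(F)\cap S_0$. For the surviving classes, the symmetry $E((i,j),(k,l)) = E((j,i),(k,l)) = E((i,j),(l,k))$ shows that every $E$-factor depends only on $[\varphi]$, while the antisymmetry $D((i,j),(k,l)) = -D((j,i),(k,l))$ combined with the block form of $D$ gives
\begin{align*}
D(\varphi(u),\varphi(v)) = \sigma_u \sigma_v \, D_1(\overline{\varphi(u)},\overline{\varphi(v)}) \qquad \text{for } u,v \in S_1,
\end{align*}
where $\sigma_v \in \{\pm 1\}$ equals $+1$ when $\varphi(v)_1<\varphi(v)_2$ and $-1$ otherwise. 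Consequently, the contribution of $[\varphi]$ to $T_F$ factors as $W_{[\varphi],F}\cdot I_{[\varphi],F}$, where $W_{[\varphi],F}\geq 0$ is the product of the $E$-factors and $I_{[\varphi],F} := \sum_{\sigma\in\{\pm 1\}^{S_1}}\prod_{e=(u,v)\in F}\sigma_u\sigma_v D_1(\overline{\varphi(u)},\overline{\varphi(v)})$.

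The last step converts hypothesis~(a) into $I_{[\varphi],F}\geq 0$. Combining $SDS\geq 0$ with $D((i,j),\cdot) = -D((j,i),\cdot)$ forces $s_{(i,j)}=-s_{(j,i)}$ whenever the $\{i,j\}$-row of $D_1$ is not identically zero, so defining $T_{\{i,j\}}:=s_{(\min(i,j),\max(i,j))}$ yields a diagonal $\pm 1$ matrix $T$ with $TD_1T\geq 0$ (zero rows are vacuous). Writing $D_1(\{i,j\},\{k,l\}) = T_{\{i,j\}}T_{\{k,l\}}c_{\{i,j\},\{k,l\}}$ with $c\geq 0$ and performing the bijective change of variable $\sigma'_u := \sigma_u T_{\overline{\varphi(u)}}$, the inner sum becomes $\prod_{e\in F}c_e\cdot\sum_{\sigma'}\prod_{e\in F}\sigma'_u\sigma'_v$. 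The standard parity calculation evaluating the remaining sum to $2^{|S_1|}$ when every vertex has even degree in $(V(G),F)$ and to $0$ otherwise shows $I_{[\varphi],F}\geq 0$. Summing the non-negative contributions over $[\varphi]$ gives $T_F \geq 0$, which proves~(a); (b) then follows by the reduction above.

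The main technical obstacle is the sign bookkeeping in the last paragraph: verifying that the global hypothesis $SDS\geq 0$ really does force $s_{(i,j)}=-s_{(j,i)}$ on every pair where the $\{i,j\}$-row of $D_1$ is nonzero, and confirming that the change of variable $\sigma_u\mapsto\sigma_u T_{\overline{\varphi(u)}}$ is well-defined and bijective on $\{\pm 1\}^{S_1}$ for each class $[\varphi]$. Once this is handled, the remainder is essentially the cycle-space parity argument already used for Lemma~\ref{pos-lemma}.
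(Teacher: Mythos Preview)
Your argument is correct. The overall structure---writing $A^= = E+D$, $A^\times = E-D$, expanding $Z(H,A)=\sum_{F}(\prod_{e\in F}s_e)T_F$, and reducing everything to the claim $T_F\geq 0$---is exactly what the paper does (its Lemma~\ref{genlemma}). The reduction of part~(b) to part~(a) via $D\mapsto -D$ is also the same.

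Where you diverge is in the proof of $T_F\geq 0$. The paper does \emph{not} group by equivalence classes $[\varphi]$ here; instead it stays at the matrix level. It writes $D=K-L$ with $K,L\geq 0$, observes that the hypothesis on $S$ forces $K$ to be block-diagonal and $L$ block-off-diagonal with respect to the bipartition $\{(i,j):s_{(i,j)}=+1\}\cup\{(i,j):s_{(i,j)}=-1\}$, and then argues: (i) the $D$-edges must form an Eulerian subgraph $G_D$ (else the term vanishes), (ii) after expanding each $D$ as $K-L$, a nonzero term forces the $(-L)$-edges to be a cut of $G_D$, (iii) cuts of Eulerian graphs have even size, so the minus signs cancel.

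Your route instead recycles the equivalence-class machinery from the proof of Theorems~\ref{pos} and~\ref{neg}: you pass to $D_1$, absorb the sign matrix into the $\sigma$-variables by the substitution $\sigma'_u=\sigma_u T_{\overline{\varphi(u)}}$, and finish with the familiar character sum $\sum_{\sigma'}\prod_{e\in F}\sigma'_u\sigma'_v\in\{0,2^{|S_1|}\}$. This is a clean and arguably more direct extension of Lemma~\ref{pos-lemma}; the paper's $K$--$L$/cut argument is a bit more combinatorial but avoids the $[\varphi]$ bookkeeping. Both reach the same conclusion, and the ``technical obstacle'' you flag (that $s_{(i,j)}=-s_{(j,i)}$ on nonzero rows, hence $S_1D_1S_1\geq 0$ for some $\pm1$ diagonal $S_1$) is precisely the content of the Remark following Theorem~\ref{gen}.
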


\begin{Rem} It is easy to check that the condition of the existence of a diagonal matrix $S$ of size $q^2\times q^2$ with entries $\pm 1$ in the diagonal such that 
$SDS$ has only non-negative (non-positive) entries is equivalent with the existence of a diagonal matrix $S_1$ of size $\binom{q}{2}\times \binom{q}{2}$ with entries $\pm 1$ in the diagonal such that $S_1D_1S_1$ has only non-negative (non-positive) entries. One direction is trivial: the restriction of $S$ to the corresponding rows and columns implies the existence of $S_1$. The other direction follows from the following argument. Let $c(i,j)=1$ if $i<j$, and $c(i,j)=-1$ if $i>j$. 
Let us define the diagonal matrix $S$ as follows:
\begin{align}
s_{(i,j),(i,j)}=(s_1)_{(i,j),(i,j)}c(i,j),
\end{align}
and $s_{(i,i),(i,i)}=1$ (it does not matter how we define it). 
Then it is easy to check that $SDS$ is non-negative if $S_1D_1S$ is non-negative, and 
$SDS$ is non-positive if $S_1D_1S$ is non-positive.

\end{Rem}

Theorem~\ref{gen} might seem to be a very technical statement, but it covers both Theorem~\ref{pos} and Theorem~\ref{neg} by simply choosing $S$ to be the identity matrix. The relationship of part (b) with Y. Zhao's theorem will be explained later. 
\bigskip

Let us collect a few lemmas. The first one is trivial, but very useful.

\begin{Lemma} The function $\h(G|A_e)$ is linear in each matrix, i. e., if for some edge $e_1$ we have $A_{e_1}=\beta B_{e_1}+\gamma C_{e_1}$ then 
\begin{align}
\h(G|A_e (e\in E(G)\setminus {e_1}),A_{e_1})=\beta\h(G|A_e (e\in E(G)\setminus {e_1}),B_{e_1})+\gamma \h(G|A_e (e\in E(G)\setminus {e_1}),C_{e_1}).
\end{align}

\end{Lemma}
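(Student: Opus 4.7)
The plan is to prove the lemma directly from the definition of $\h(G|A_e)$; there is nothing to do beyond unfolding the sum and factoring out the edge $e_1$. First I would fix the edge $e_1=(u_1,v_1)$ and isolate its contribution in the product over $E(G)$, writing
$$\h(G|A_e) = \sum_{\varphi: V(G)\to [q]} A_{e_1}(\varphi(u_1),\varphi(v_1)) \prod_{e=(u,v) \in E(G)\setminus\{e_1\}} A_e(\varphi(u),\varphi(v)).$$
The key observation is that a matrix $A_e$ is evaluated only once per summand, namely at the single pair $(\varphi(u),\varphi(v))$, so replacing the single factor $A_{e_1}(\varphi(u_1),\varphi(v_1))$ by $\beta B_{e_1}(\varphi(u_1),\varphi(v_1))+\gamma C_{e_1}(\varphi(u_1),\varphi(v_1))$ (which is just the entrywise definition of $\beta B_{e_1}+\gamma C_{e_1}$) and distributing multiplication over addition yields exactly the two sums appearing on the right-hand side, each with its scalar coefficient pulled out.

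There is essentially no obstacle: the argument is purely formal and does not use any property of the matrices $A_e, B_{e_1}, C_{e_1}$ (they need not be symmetric, non-negative, of a particular size, etc.) beyond compatibility of the matrix addition with the size $q\times q$. The real point of stating the lemma is that it upgrades trivially, by iterating once per edge, to full multilinearity of $\h(\,\cdot\,|A_e)$ in the tuple $(A_e)_{e\in E(G)}$. This is the form in which it will be used later: after writing $A^{=}=E+D$ and $A^{\times}=E-D$, any decoration of $G$ by $A^{=}$ and $A^{\times}$ expands into a sum of $2^{|E(G)|}$ terms indexed by subsets $F\subseteq E(G)$, with $D$ on $F$ and $E$ off $F$, which is precisely the expansion exploited in the proof of Theorem~\ref{gen}.
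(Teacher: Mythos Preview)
Your proposal is correct and matches the paper's treatment: the paper simply declares the lemma ``trivial, but very useful'' and gives no proof, and your direct unfolding of the definition is exactly the obvious argument. Your added remark about iterating to full multilinearity and expanding $A^{=}=E+D$, $A^{\times}=E-D$ into $2^{|E(G)|}$ terms is also precisely how the lemma is used downstream.
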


\begin{Lemma} Assume that $\h(G|A_e)\neq 0$ and for every edge $e$ we have $A_e=D$ or $E$. Then the subgraph of $G$ with vertex set $V(G)$ and edge set $E_D(G)=\{e\in E(G)\ |\ A_e=D\}$ is an Eulerian subgraph, i. e., all degrees are even.

\end{Lemma}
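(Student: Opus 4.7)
I will prove the contrapositive: if the subgraph with edge set $E_D(G)$ is not Eulerian, then $\h(G|A_e) = 0$.

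The key observation is the behavior of $D$ and $E$ under the ``transposition'' involution $\tau$ on $[q^2]$ that sends $(i,j) \mapsto (j,i)$. Direct calculation from the definitions yields
\begin{align*}
E((j,i),(k,l)) &= \tfrac{1}{2}\bigl(A(j,k)A(i,l)+A(j,l)A(i,k)\bigr) = E((i,j),(k,l)), \\
D((j,i),(k,l)) &= \tfrac{1}{2}\bigl(A(j,k)A(i,l)-A(j,l)A(i,k)\bigr) = -D((i,j),(k,l)),
\end{align*}
so $E$ is invariant and $D$ is antisymmetric under $\tau$ applied to either argument. Moreover, $D((i,i),(k,l)) = 0$ for any $(k,l)$, since then $A^=$ and $A^\times$ agree on that entry; the block form of $D$ given in the paper records exactly this vanishing.

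Now suppose $E_D(G)$ has a vertex $v_0$ of odd degree. Define an involution $\iota$ on the set of maps $\varphi \colon V(G)\to[q^2]$ by $\iota(\varphi)(v_0) = \tau(\varphi(v_0))$ and $\iota(\varphi)(u) = \varphi(u)$ for $u \neq v_0$. Decompose the sum
\begin{align*}
\h(G|A_e) \;=\; \sum_{\varphi} \prod_{e=(u,v)\in E(G)} A_e(\varphi(u),\varphi(v))
\end{align*}
according to whether $\varphi(v_0)$ is diagonal or not. If $\varphi(v_0) = (i,i)$, then $\iota(\varphi) = \varphi$; but since $v_0$ has positive $D$-degree, the product contains at least one factor of the form $D((i,i),\varphi(w)) = 0$, so this term contributes $0$.

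If $\varphi(v_0) = (i,j)$ with $i \neq j$, then $\iota(\varphi) \neq \varphi$, and $\iota$ pairs up such maps. Compared to $\varphi$, passing to $\iota(\varphi)$ leaves all $E$-edges at $v_0$ unchanged (by invariance of $E$) and leaves all edges not incident to $v_0$ unchanged, while every $D$-edge incident to $v_0$ gets its factor negated. Since $\deg_{E_D(G)}(v_0)$ is odd, the total product picks up an overall sign of $-1$, and the contributions of $\varphi$ and $\iota(\varphi)$ cancel in pairs. Summing over both cases gives $\h(G|A_e) = 0$, which is the desired contrapositive.

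The only place where care is needed is the non-diagonal case, where one must track the sign contributed by each incident $D$-edge; the factorization $D((j,i),(k,l)) = -D((i,j),(k,l))$ is uniform in the other endpoint $(k,l)$ (whether diagonal or not, non-zero or zero), so the signs multiply cleanly to $(-1)^{\deg_{E_D(G)}(v_0)}$, and no further case analysis on the neighbors of $v_0$ is needed.
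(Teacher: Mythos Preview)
Your proof is correct and follows essentially the same approach as the paper: both argue that if some vertex $v_0$ has odd $D$-degree, then the diagonal terms $\varphi(v_0)=(i,i)$ contribute zero (since $D$ vanishes on such entries), while the off-diagonal terms pair up under the swap $(i,j)\leftrightarrow(j,i)$ at $v_0$ with contributions differing by $(-1)^{\deg_{E_D(G)}(v_0)}=-1$. Your version is simply more explicit about naming the involution and verifying the symmetry/antisymmetry of $E$ and $D$ under $\tau$, but the content is the same.
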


\begin{proof} Suppose for contradiction that for some vertex $x$ the degree of $x$ in the graph $G_D=(V(G),E_D(G))$ is odd. Then for those $\tilde{\varphi}$ maps for which $\tilde{\varphi}(v)=(i,i)$  for some $i$, the contribution of the product
\begin{align}
\prod_{(u,v)\in E(G)}A_e(\tilde{\varphi}(u),\tilde{\varphi}(v))=0.
\end{align}
Moreover, if we change some $\tilde{\varphi}(v)=(i,j)$ to $\tilde{\varphi}(v)=(j,i)$, the contribution changes to $(-1)^k=-1$ times the original, where $k=\deg_{G_D}(x)$. Hence $\h(G|A_e)=0$, contradiction.
\end{proof}

Note that
\begin{align}
A^==\frac{1}{2}\left(E+D\right)\, \, \, \mbox{and}\, \, \, A^{\times}=\frac{1}{2}\left(E-D\right).
\end{align}
Using these equations and the linearity of the function $\h(G|A_e)=Z(H,A)$ with $A_e=A^=$ or $A^{\times}$, we get that  $\h(G|A_e)$  is a sum of similar expression such that each $A_e$ is $D$ or $E$. By the above lemma the contribution of those $\h$'s for which $G_D$ is not Eulerian is $0$.
The next lemma will immediately imply Theorem~\ref{gen}.

\begin{Lemma} \label{genlemma} (a) Assume that there exists  some diagonal matrix $S$ of size $q^2\times q^2$ with entries $\pm 1$ in the diagonal such that $SDS$ has only non-negative entries. Then $\h(G|A_e)\geq 0$ if all $A_e=D$ or $E$.
\medskip

\noindent (b) Assume that there exists  some diagonal matrix $S$ of size $q^2\times q^2$ with entries $\pm 1$ in the diagonal such that $SDS$ has only non-positive entries. Then $\h(G|A_e)\geq 0$ if all $A_e=-D$ or $E$.

\end{Lemma}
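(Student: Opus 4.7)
The plan is to combine the Eulerian vanishing observation in the preceding lemma with a diagonal sign-change trick furnished by the matrix $S$. Let $E_D = \{e \in E(G) : A_e \in \{D, -D\}\}$ and set $G_D = (V(G), E_D)$. First I would observe that the preceding lemma, although stated for $A_e \in \{D, E\}$, carries over verbatim to the situation $A_e \in \{-D, E\}$ of part (b); its proof uses only that swapping $\tilde\varphi(v) = (i,j)$ with $(j,i)$ flips the sign of every $D$-contribution at $v$, which is insensitive to an overall sign on $D$. Thus in both cases I may assume $G_D$ is Eulerian, i.e.\ every vertex has even degree in $G_D$.

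The key step will be a sign-absorption identity. Let $S = \mathrm{diag}(s_1, \dots, s_{q^2})$ with $s_i \in \{\pm 1\}$ be the matrix given by hypothesis. Since each $\deg_{G_D}(v)$ is even, for every $\tilde\varphi : V(G) \to [q^2]$,
\[
1 = \prod_{v \in V(G)} s_{\tilde\varphi(v)}^{\deg_{G_D}(v)} = \prod_{e=(u,v) \in E_D} s_{\tilde\varphi(u)} s_{\tilde\varphi(v)}.
\]
Inserting this factor into the defining sum of $\h(G|A_e)$ absorbs the signs into a conjugation by $S$ along the $E_D$-edges, and I would obtain
\[
\h(G|A_e) = \sum_{\tilde\varphi : V(G) \to [q^2]} \prod_{e=(u,v) \in E_D} (S A_e S)(\tilde\varphi(u), \tilde\varphi(v)) \prod_{e=(u,v) \notin E_D} E(\tilde\varphi(u), \tilde\varphi(v)).
\]

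From here the proof will close at once. In part (a), $A_e = D$ on $E_D$ yields $S A_e S = SDS \geq 0$ entry-wise by hypothesis. In part (b), $A_e = -D$ yields $S A_e S = -SDS \geq 0$ since $SDS \leq 0$. Combined with the non-negativity of $E$, which is the arithmetic mean of the non-negative matrices $A^=$ and $A^{\times}$, every summand above is a product of non-negative numbers, and the claim follows.

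The main obstacle I anticipate is the initial bookkeeping, namely confirming that the preceding Eulerian lemma applies unchanged in part (b) and that the sign factors do distribute correctly along the $E_D$-edges. Once these are settled, the rest is a mechanical consequence of the fact that conjugation by a sign matrix acts entry-wise as $(SMS)_{ij} = s_i M_{ij} s_j$, and nothing further is required beyond the hypothesis on the sign pattern of $SDS$.
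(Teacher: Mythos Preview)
Your proof is correct and takes a genuinely different, more direct route than the paper. The paper decomposes $D=K-L$ into its nonnegative and nonpositive parts, expands $\h(G|A_e)$ by linearity into $2^{|E_D|}$ terms decorated by $E,K,-L$, and then argues structurally: any nonzero term forces the $-L$-edges to form a cut of $G_D$, and cuts of an Eulerian graph have even size, so the minus signs cancel. You bypass all of this with the single identity $\prod_{v} s_{\tilde\varphi(v)}^{\deg_{G_D}(v)}=1$, which in one stroke converts each $D$ (respectively $-D$) on $E_D$ into the entrywise nonnegative matrix $SDS$ (respectively $-SDS$). Both arguments ultimately rest on the Eulerian property of $G_D$, but your use of it is more economical: no auxiliary $K,L$ decomposition, no linear expansion, no cut argument. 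The paper's version makes the combinatorics of which $\tilde\varphi$ contribute somewhat more transparent, but for establishing the inequality your sign-absorption trick is cleaner and shorter.
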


\begin{proof} (a) For an $x\in \mathbb{R}$ let $x^+=\max(x,0)$, and $x^-=-\min(x,0)$. Then $x=x^+-x^-$. For the matrix $D=(d_{ij})$ let $K=(d_{ij}^+)$ and $L=(d_{ij}^-)$.
Then $D=K-L$. Note that $S$ determines a bipartation of the $q^2$ pairs, namely let $(i,j)\in A$ if $s_{(i,j),(i,j)}=1$,  and let $(i,j)\in B$ if $s_{(i,j),(i,j)}=-1$. Since $SDS$ is a non-negative matrix then we can arrange the rows and columns of $D$ such a way that first few columns and rows correspond to the elements of $A$, and the last columns and rows correspond to the elements of $B$. Then we can write $K$ and $L$ into the form
\begin{align}
K=\left( \begin{array}{cc} K_{11} & 0 \\ 0 & K_{-1,-1} \end{array} \right)\, \, \, \mbox{and} \, \, \, L=\left( \begin{array}{cc} 0 & L_{1,-1}  \\  L_{1,-1}^T & 0 \end{array}\right).
\end{align}
Now we are ready to start the proof. First of all, by the previous lemma  we can assume that $G_D$ is an Eulerian-graph. Since $D=K-L$ we can use the linearity of the function $\h(G|A_e)$ to decompose each $D$ to $2$ terms $K$ and $-L$. So we get $2^{|E(G_D)|}$ decorated graphs where each edge is decorated with $E$, $K$ or $-L$. We need two observations.

One crucial observation is the following: if the edges which are decorated by $-L$ does not form a cut of $G_D$ then $\h(G|A_e)=0$. Indeed, if for some $\tilde{\varphi}$ the contribution is not $0$ then two elements of $A$ (or $B$) should be connected by an edge with matrix $E$ or $K$, and two element from different classes should be connected by an edge with matrix $E$ or $-L$. This means that the edges equipped with the matrix $-L$ form a cut of the graph $G_D$.

The second observation is the following: a cut of an Eulerian graph (namely $G_D$) has to contain an even number of edges, this means that we can delete the minus signs in front of $-L$. Now we see that $\h(G|A_e)$ has to be non-negative since $E,K,L$ are non-negative matrices.

\noindent (b) The proof of part (b) is practically the same  as of part (a). 

\end{proof}

Now the proof of Theorem~\ref{gen} immediately follows.

\begin{proof}[Proof of Theorem~\ref{gen}.] We prove only part (a), part (b) is completely analogous. Note that
\begin{align}
Z(G\cup G,A)=\h(G|A_e),
\end{align}
where all $A_e=A^==\frac{1}{2}\left(E+D\right)$. While
\begin{align}
Z(H,A)=\h(G|A_e),
\end{align}
where some $A_e=A^==\frac{1}{2}\left(E+D\right)$, and for others we have $A_e=A^{\times}=\frac{1}{2}\left(E-D\right)$.
So if we expand both function into $2^{e(G)}$ terms then $Z(G\cup G,A)$ will contain only $\h(G|A_e=D\, \mbox{or}\, E)$ with positive 
coefficients, while $Z(H,A)$ may contain the same terms with negative coefficients. By part (a) of Lemma~\ref{genlemma} the terms $\h(G|A_e=D\, \mbox{or}\, E)$ are non-negative hence
\begin{align}
Z(G\cup G,A) \geq Z(H,A).
\end{align}
\end{proof}

\section{Vertex weighted partition functions} \label{weights}

In this section we prove the following result.

\begin{Th} \label{gen-weight} Let $A$ be a non-negative symmetric matrix of size $q\times q$, and let $\nu:[q]\to \mathbb{R}_+$ be a weight function.
Furthermore, let the matrices $A^=,A^{\times},D,E$ be defined as above. 
\medskip

\noindent (a) If there exists a diagonal matrix $S$ of size $q^2\times q^2$ with entries $\pm 1$ in the diagonal such that 
$SDS$ has only non-negative entries then for any $2$-lift $H$ of $G$ we have
\begin{align}
Z(G\cup G,A,\nu)\geq  Z(H,A,\nu).
\end{align}
\medskip

\noindent (b) If there exists a diagonal matrix $S$ of size $q^2\times q^2$ with entries $\pm 1$ in the diagonal such that 
$SDS$ has only non-positive entries then for any $2$-lift $H$ of $G$ we have
\begin{align}
Z(G\times K_2,A,\nu)\geq  Z(H,A,\nu).
\end{align}

\end{Th}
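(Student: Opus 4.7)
The plan is to rerun the proof of Theorem~\ref{gen} verbatim in a slightly enlarged formalism that carries along vertex weights, and check that nothing in the sign-cancellation arguments is disturbed.

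First, I would extend the decorated-graph homomorphism function to the vertex-weighted setting: given a vertex weight $\nu:[q]\to \mathbb{R}_+$, define
\begin{align}
\h(G|A_e,\nu)=\sum_{\varphi:V(G)\to [q]}\prod_{u\in V(G)}\nu(\varphi(u))\prod_{e=(u,v)\in E(G)}A_e(\varphi(u),\varphi(v)).
\end{align}
This is still linear in each matrix $A_e$. Next, lift to the $2$-lift formalism by setting $\tilde{\nu}:[q^2]\to \mathbb{R}_+$, $\tilde{\nu}((i,j))=\nu(i)\nu(j)$. Exactly as in Section~\ref{general setting}, for any $2$-lift $H$ of $G$ one has
\begin{align}
Z(H,A,\nu)=\h(G|A_e,\tilde{\nu}),
\end{align}
where each $A_e$ is either $A^{=}$ or $A^{\times}$ according to the type of the lifted edge. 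In particular $Z(G\cup G,A,\nu)$ corresponds to choosing $A_e=A^{=}$ on every edge, and $Z(G\times K_2,A,\nu)$ to $A_e=A^{\times}$ on every edge.

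The heart of the proof is then to verify the weighted analogues of the two technical lemmas used in the proof of Theorem~\ref{gen}. The Eulerian lemma asserts that if some edges are decorated by $D$, some by $E$, and $G_D$ has a vertex of odd degree, then $\h(G|A_e,\tilde{\nu})=0$. The involution used there is $\tilde{\varphi}(x)=(i,j)\mapsto (j,i)$; this flips the sign by $(-1)^{\deg_{G_D}(x)}$ thanks to the block structure of $D$. The key point is that $\tilde{\nu}$ is \emph{symmetric} under this swap: $\tilde{\nu}((i,j))=\nu(i)\nu(j)=\tilde{\nu}((j,i))$. Hence the weighted terms pair up with opposite signs and cancel, exactly as before. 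The analogue of Lemma~\ref{genlemma} then goes through word-for-word: decomposing $D=K-L$ using the bipartition induced by $S$, we get that only those $\tilde{\varphi}$ for which the $-L$-edges form a cut of $G_D$ contribute; the Eulerianity of $G_D$ forces the cut to have an even number of edges, so all signs cancel. The additional factor $\prod_u\tilde{\nu}(\tilde{\varphi}(u))\geq 0$ plays no role in the sign analysis, so we conclude $\h(G|A_e,\tilde{\nu})\geq 0$ whenever each $A_e\in\{E,D\}$ for part (a), respectively $\{E,-D\}$ for part (b).

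With these in hand, the theorem follows exactly as in Section~\ref{general setting}: write $A^{=}=\tfrac{1}{2}(E+D)$ and $A^{\times}=\tfrac{1}{2}(E-D)$, expand $Z(G\cup G,A,\nu)$ and $Z(H,A,\nu)$ by linearity of $\h(\cdot,\tilde{\nu})$ into $2^{e(G)}$ terms of the form $\h(G|A_e\in\{E,D\},\tilde{\nu})$; the former always enters with non-negative sign while the latter may contain terms with negative signs, and since every such term is non-negative by the weighted Lemma~\ref{genlemma}, the desired inequality drops out. Part (b) is the same with the roles of $A^{=}$ and $A^{\times}$ exchanged. The main obstacle is really just the sanity check of compatibility between the vertex weight $\tilde{\nu}$ and the $(i,j)\leftrightarrow(j,i)$ involution driving the Eulerian cancellation, but this is automatic because $\tilde{\nu}$ is constructed symmetrically from $\nu$.
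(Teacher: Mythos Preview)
Your argument is correct, but it proceeds along a different path from the paper's own proof. The paper does not rerun the sign-cancellation machinery of Section~\ref{general setting} with weights attached; instead it reduces the weighted case to the already-proven unweighted Theorem~\ref{gen} by a cloning trick. For integer-valued $\nu$, one replaces each entry $a_{ij}$ of $A$ by a $\nu(i)\times\nu(j)$ block all of whose entries equal $a_{ij}$, obtaining a matrix $A^{\nu}$ with $Z(G,A,\nu)=Z(G,A^{\nu})$ for every graph $G$; the condition on $S$ and $D$ lifts to the corresponding matrices built from $A^{\nu}$ simply by repeating each diagonal entry of $S$ the appropriate number of times. Rational $\nu$ is then handled by clearing denominators and real $\nu$ by continuity. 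This buys brevity and modularity: the weighted theorem is a black-box corollary of the unweighted one, and the argument makes transparent the general principle that vertex weights come ``for free'' whenever the hypothesis is insensitive to cloning target states. Your approach, by contrast, tracks the weight factor $\tilde{\nu}$ through the Eulerian cancellation and the $D=K-L$ decomposition, relying on the observation that $\tilde{\nu}((i,j))=\nu(i)\nu(j)$ is invariant under the swap $(i,j)\leftrightarrow(j,i)$. That is perfectly sound and has the virtue of being self-contained, but it requires revisiting the internals of the proof of Theorem~\ref{gen} rather than invoking it.
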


\begin{proof} First let us assume that $\nu:[q]\to \mathbb{Z}_+$. Let us define the following block matrix: replace the element $a_{ij}$ with a block of size $\nu(i)\times \nu(j)$ whose each element is $a_{ij}$. Let $A^{\nu}$ be the obtained matrix, this is a matrix of size $Q=\sum_{i=1}^q\nu(i)$. It is easy to see that $Z(G,A,\nu)=Z(G,A^{\nu})$ for every graph $G$. If we create the matrices $(A^{\nu})^=,(A^{\nu})^{\times},D^{\nu},E^{\nu},D_1^{\nu}$ from $A^{\nu}$ then we see that $D^{\nu}$ again satisfies the condition of part (a) or part (b) if $D$ satisfies it. Simply in $S$ we need to change a $\pm 1$ at place $(i,j)$ to $\nu(i)\nu(j)$ pieces of $\pm 1$'s. Hence in part (a) we have
\begin{align}
Z(G\cup G,A,\nu)=Z(G\cup G,A^{\nu})\geq Z(H,A^{\nu})=Z(H,A,\nu).
\end{align}
And in part (b) we have
\begin{align}
Z(G\times K_2,A,\nu)=Z(G\times K_2,A^{\nu})\geq  Z(H,A^{\nu})=Z(H,A,\nu).
\end{align}
Next let as assume that $\nu:[q]\to \mathbb{Q}_+$. Then let us choose an $R$ for which $R\nu(i)\in \mathbb{Z}_+$ for all $i\in [q]$. Then
\begin{align}
Z(G,A,\nu)=\frac{1}{R^{v(G)}}Z(G,A,R\nu)
\end{align}
for every graph $G$ and we are done since we know that for $R\nu$ the statement is true. Finally, for any $\nu:[q]\to \mathbb{R}_+$ we get the statement by continuity.
\end{proof}

\section{Tutte-polynomial} \label{Tutte-polynomial}

In this section we prove Theorem~\ref{Tutte-2} which directly implies Theorem~\ref{Tutte}. The theorem is based on the FKG-inequality for the random cluster model. In the random cluster model we have a fixed graph $G$ and we choose a random subset $F$ of the edge set with probability proportional to $q^{k(F)}w^{|F|}$, i. e., we have
\begin{equation}
\mathbb{P}(F)=\frac{q^{k(F)}w^{|F|}}{Z(G,q,w)}.
\end{equation}
Clearly, the probability of the event that a fixed edge $e$ is not in the chosen set $F$ is
\begin{equation}
\mathbb{P}(e\notin F)=\frac{Z(G-e,q,w)}{Z(G,q,w)},
\end{equation}
where $G-e$ is the graph obtained by deleting the edge $e$ from $G$.
On the other hand, the probability that $e$ is in the random set $F$ is 
\begin{equation}
\mathbb{P}(e\in F)=\frac{wZ(G/e,q,w)}{Z(G,q,w)},
\end{equation}
where $G/e$ is the graph which we get if we contract the edge $e$. (Note that it is worth working with multigraphs, i. e., we allow multiple edges and loops too.)
\medskip

Note that there is a natural partial ordering on the subsets of $F$, namely $F'<F$ if $F'\subseteq F$. We say that a function $f$ on the subset of the edges is  monotone increasing if $f(F')\leq f(F)$ whenever $F'<F$, and monotone decreasing if $f(F')\geq f(F)$ whenever $F'<F$. It turns out that when $q\geq 1$ and $w\geq 0$ then the random cluster model satisfies the FKG lattice condition and consequently it implies that
\begin{equation}
\mathbb{E}(f)\cdot \mathbb{E}(g)\leq \mathbb{E}(fg)
\end{equation}
for all monotone increasing functions. For details see Theorem 4.11 and Theorem 8.7 of \cite{Grim}.

We only need the special case when $f=1_e$ and $g=1_f$, the indicator functions of the events that $e$ or $f$ is in the random subset $F$, these are clearly  monotone increasing functions. In this case we get that
\begin{equation}
\mathbb{P}(e\in F)\mathbb{P}(f\in F)\leq \mathbb{P}(e,f\in F),
\end{equation}
which implies that
\begin{equation}
\mathbb{P}(e\in F)\mathbb{P}(f\notin F)\geq \mathbb{P}(e\in F, f\notin F),
\end{equation}
and also the inequality
\begin{equation}
\mathbb{P}(e,f\in F)\mathbb{P}(e,f\notin F)\geq \mathbb{P}(e\in F, f\notin F)\mathbb{P}(e\notin F, f\in F).
\end{equation}
If we apply this last inequality for some graph $H$ and we write it back to the function $Z(H,q,w)$ we get that
\begin{equation} \label{pos-cor}
Z(H-\{e,f\},q,w)Z(H/\{e,f\},q,w)\geq Z((H-e)/f,q,w)Z((H/e)-f,q,w).
\end{equation}
Clearly, if $H=G\cup G$ and $e$ and $f$ are in different copies of $G$ then we have equality in the above inequality.

\begin{proof}[Proof of Theorem~\ref{Tutte-2}] In this proof it will be more convenient to let $G$ be a multigraph. Note that we have
\begin{equation}
Z(G,q,w)=wZ(G/e,q,w)+Z(G-e,q,w)
\end{equation}
since we can decompose the sets $F$ according to the cases whether $F$ contains $e$ or not. 

Now we prove the statement by induction on the number of edges. For the empty graph the statement is clearly true. Let $e$ be an edge of $G$, let $f$ be the corresponding edge in another copy of $G$, and with a slight abuse of notation let $e$ and $f$ be the corresponding edges in another $2$-lift $H$ of $G$. Then
\begin{align}
Z(G\cup G,q,w) &= Z((G-e)\cup (G-f),q,w)+wZ((G-e)\cup (G/f),q,w)+\\
&+wZ((G/e)\cup (G-f),q,w)+w^2Z((G/e)\cup (G/f),q,w),
\end{align}
and similarly,
\begin{align}
Z(H,q,w) &=Z(H-\{e,f\},q,w)+wZ((H-e)/f,q,w)+\\
&+wZ((H/e)-f,q,w)+w^2Z(H/\{e,f\},q,w).
\end{align}
By induction we have
\begin{equation}
Z((G-e)\cup (G-f),q,w)\geq Z(H-\{e,f\},q,w).
\end{equation}
Observe that $H/\{e,f\}$ is $2$-lift of $G/e$ so by induction we have
\begin{equation}
w^2Z((G/e)\cup (G/f),q,w)\geq w^2Z(H/\{e,f\},q,w).
\end{equation}
Finally,
\begin{align}
Z((G-e)\cup (G-f),q,w)Z((G/e)\cup (G/f),q,w) &= Z((G-e)\cup (G/f),q,w)^2=\\
&= Z((G/e)\cup (G-f),q,w)^2,
\end{align}
whereas
\begin{align}
Z(H-\{e,f\},q,w)Z(H/\{e,f\},q,w)&\geq Z((H-e)/f,q,w)Z((H/e)-f,q,w)=\\
&= Z((H-e)/f,q,w)^2= \\
&= Z((H/e)-f,q,w)^2.
\end{align}
Here the inequality comes from the FKG-inequality for the random-cluster model, see inequality~\ref{pos-cor}.
This means that 
\begin{align}
Z((G-e)\cup (G/f),q,w)&=(Z((G-e)\cup (G-f),q,w)Z((G/e)\cup (G/f),q,w))^{1/2}\geq \\
&\geq (Z(H-\{e,f\},q,w)Z(H/\{e,f\},q,w))^{1/2}=\\
&= Z((H/e)-f,q,w).
\end{align}
Hence
\begin{equation}
Z(G\cup G,q,w)\geq Z(H,q,w).
\end{equation}

\end{proof}

\subsection{Ruozzi's ideas}

In this section we give a very brief account into the work of N. Ruozzi \cite{Ruo,Ruo2}. Ruozzi investigated a slightly more general setup, the so-called graphical model. 
\medskip

Let $f:\{0,1\}^n\to \mathbb{R}_{\geq 0}$ be a non-negative function. We say that $f$ factors with respect to a hypergraph $G=(V,\mathcal{H})$, where $\mathcal{H}\subseteq 2^V$ if there exist potential functions $\phi_u:\{0,1\}\to \mathbb{R}_{\geq 0}$ for each $u\in V(G)$ and $\psi_{\alpha}:
\{0,1\}^{\alpha}\to \mathbb{R}_{\geq 0}$ for each $\alpha\in \mathcal{H}$ such that
\begin{align} \label{fact}
f(\underline{x})=\prod_{u\in V}\phi_u(x_u)\prod_{\alpha\in \mathcal{H}}\psi_{\alpha}(\underline{x}_{\alpha}),
\end{align}
where $\underline{x}_{\alpha}$ is the subvector of the vector indexed by the set $\alpha$. Finally, let
\begin{align}
Z(G)=\sum_{\underline{x}\in \{0,1\}^n}f(x).
\end{align}

For instance, if $\phi\equiv \nu$ for all $u\in V(G)$ and every $\alpha\in \mathcal{H}$ has size $2$ and for all $\{u,v\}\in \mathcal{H}$ we have $\psi_{u,v}(i,j)=a_{i,j}$ for some matrix $A$ of size $2$ then we get $Z(G)=Z(G;A,\nu)$. 

\begin{Def}
A function $f:\{0,1\}^n\to \mathbb{R}_{\geq 0}$ is \emph{log-supermodular} if for all $\underline{x},\underline{y}\in \{0,1\}^n$ we have
\begin{align}
f(\underline{x})f(\underline{y})\leq f(\underline{x}\wedge \underline{y})f(\underline{x} \vee \underline{y}),
\end{align}
where $(\underline{x}\wedge \underline{y})_i=\min(x_i,y_i)$ and $(\underline{x}\vee \underline{y})_i=\max(x_i,y_i)$. Similarly, A function $f:\{0,1\}^n\to \mathbb{R}_{\geq 0}$ is \emph{log-submodular} if for all $\underline{x},\underline{y}\in \{0,1\}^n$ we have
\begin{align}
f(\underline{x})f(\underline{y})\geq f(\underline{x}\wedge \underline{y})f(\underline{x} \vee \underline{y}).
\end{align}
\end{Def}

\begin{Def}
A factorization of a function $f:\{0,1\}^n\to \mathbb{R}_{\geq 0}$ over $G=(V,\mathcal{H})$ is log-supermodular if for all $\alpha\in \mathcal{H}$, $\psi_{\alpha}(\underline{x}_{\alpha})$ is log-supermodular.
\end{Def}

It turns out that if a function $f$ admits a log-spermodular factorization then the function $f$ itself is log-supermodular. In the graph case when all $\alpha$ has size $2$ then it simply means that the  matrices of size $2\times 2$ corresponding to $\psi_{\alpha}$ have positive determinants. 

Finally, in this more general setting we need to consider the $k$-lift of the function $f$ arising in the form \ref{fact}. Let us consider $k$ copies of each vertex $i$ and let us consider $k$-lifts $\alpha_1,\dots ,\alpha_k$ of $\alpha$ as follows: each $\alpha_i$ contains exactly one copy of vertex $u$ for all $u\in \alpha$. The collection of these new $\alpha_i$'s will be denoted by $\mathcal{H}_k$, the vertex set will be denoted by $V_k$. Let $\psi_{\alpha_1}=\psi_{\alpha_2}=\dots =\psi_{\alpha_k}$, and $\phi_{v'}=\phi_v$ for each copy $v'$ of the vertex $v$. Let $H$ be the corresponding graphical model. We will refer to it as the $k$-lift of $G$.

With all this preparation we are ready to phrase Ruozzi's theorem \cite{Ruo}.

\begin{Th}[N. Ruozzi \cite{Ruo}] \label{Ruo-gen} If $f:\{0,1\}^{V(G)}\to \mathbb{R}_{\geq 0}$ admits a log-supermodular factorization over $G=(V,\mathcal{H})$, then for any $k$-cover $H=(V_k,\mathcal{H}_k)$ of $G$ we have $Z(H)\leq Z(G)^k$.
\end{Th}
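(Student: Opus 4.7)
Since $Z(G)^k = Z(G^{\sqcup k})$, where $G^{\sqcup k}$ denotes the disjoint union of $k$ copies of $G$ interpreted as the trivial $k$-cover, the theorem asserts that among all $k$-covers of $G$, the trivial one maximizes $Z$. My plan is a rearrangement argument driven by the log-supermodularity of each potential $\psi_\alpha$ together with the Ahlswede--Daykin (four-functions) inequality.

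I would parametrize the space of $k$-covers by permutations: for each hyperedge $\alpha \in \mathcal{H}$ and each $v \in \alpha$, choose $\tau_{\alpha, v} \in S_k$, with the $j$-th copy of $\alpha$ in $H$ being $\{(v, \tau_{\alpha, v}(j)) : v \in \alpha\}$; the trivial cover corresponds to $\tau_{\alpha, v} = \mathrm{id}$ for all pairs. A configuration on $V(H) = V(G) \times [k]$ assigns to each vertex $v$ a tuple $(y_{v,1}, \ldots, y_{v,k}) \in \{0,1\}^k$. The vertex-potential contribution $\prod_{v, i} \phi_v(y_{v,i})$ is invariant under independent permutations of each of these $k$-tuples, so after sorting them at every vertex only the hyperedge-potentials carry any information about the twist. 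The core claim is then a rearrangement inequality: for each hyperedge $\alpha$, the product $\prod_{j=1}^k \psi_\alpha\bigl((y_{v, \tau_{\alpha, v}(j)})_{v \in \alpha}\bigr)$, summed against all remaining factors of $Z$, is maximized when all $\tau_{\alpha, v} = \mathrm{id}$. For a single log-supermodular $\psi_\alpha$ this is the classical FKG rearrangement principle: the pointwise inequality $\psi_\alpha(y)\psi_\alpha(z) \le \psi_\alpha(y \wedge z)\psi_\alpha(y \vee z)$ shows that replacing an unsorted pair by its sorted counterpart only increases the product, and iterating yields the one-hyperedge version of the claim.

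The main obstacle is globalizing this local statement: sorting the $k$-tuple at one vertex $v$ simultaneously rearranges copies of every hyperedge incident to $v$, so the pairwise-swap step must remain compatible across hyperedges. My approach would be induction on the ``twist distance'' between $H$ and $G^{\sqcup k}$, measured by the number of elementary transpositions $\tau_{\alpha_0, v_0} \mapsto \tau_{\alpha_0, v_0} \circ (i\;j)$ needed to trivialize all the $\tau$'s. A single such transposition changes only the two hyperedge copies $\alpha_0^i$ and $\alpha_0^j$, and the associated comparison reduces to a four-functions inequality on the lattice $\{0,1\}^{\alpha_0}$: the two affected $\psi_{\alpha_0}$-factors are packaged into the roles of $\alpha, \beta, \gamma, \delta$ in Ahlswede--Daykin, while the ``environmental'' contributions from other hyperedges incident to $(v_0, i)$ and $(v_0, j)$ enter as auxiliary log-supermodular weights. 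The log-supermodular factorization hypothesis is precisely what ensures those environmental factors are themselves log-supermodular, so Ahlswede--Daykin applies at each step; iterating over all transpositions transforms $H$ into $G^{\sqcup k}$ without ever decreasing $Z$, yielding $Z(H) \le Z(G^{\sqcup k}) = Z(G)^k$.
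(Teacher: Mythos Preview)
First, a framing remark: the paper does not supply a proof of this theorem; it is quoted from Ruozzi's work and only a simplified variant for $2$-lifts is proved afterwards (by induction on vertices, using that marginals of log-supermodular functions stay log-supermodular). So there is no ``paper's own proof'' to compare against here, and your attempt has to be judged on its own.

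Your plan has a genuine gap at the inductive step. You propose to measure the distance from a $k$-cover $H$ to the trivial cover $G^{\sqcup k}$ by the total number of transpositions needed to bring every $\tau_{\alpha,v}$ to the identity, and then to peel off one transposition at a time while showing that $Z$ does not decrease. The problem is that this ``twist distance'' is not an invariant of the cover: the same cover has many permutation descriptions (one may relabel the $k$ copies of each vertex, replacing each $\tau_{\alpha,v}$ by $\sigma_v^{-1}\tau_{\alpha,v}$), and different descriptions have different twist distances. In particular, the trivial cover itself has representatives with arbitrarily many non-identity $\tau$'s. Hence a transposition that reduces your distance by one can actually move you \emph{away} from the trivial cover.

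Here is a concrete failure already for $k=2$. Take $G=C_4$ with edges $uv,vw,wx,xu$, and describe a $2$-cover by its set $S$ of twisted edges. With $S=\{uv,wx\}$ one checks that the cover is two disjoint $4$-cycles, i.e.\ isomorphic to $G\cup G$; with $S'=\{wx\}$ the cover is a single $8$-cycle. Your step ``remove one twist'' goes from $S$ to $S'$, and the theorem itself says $Z(G\cup G)\ge Z(C_8)$, so $Z$ strictly \emph{decreases} along this step for a generic log-supermodular $\psi$. Thus there is no monotonicity in your induction parameter, and the step you hoped to justify by Ahlswede--Daykin is simply false.

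There is also a mismatch in the four-functions reduction you sketch. A single transposition swaps the $v_0$-entries in two hyperedge copies; the comparison you need is between $\psi_{\alpha_0}(a,R^i)\psi_{\alpha_0}(b,R^j)$ and $\psi_{\alpha_0}(b,R^i)\psi_{\alpha_0}(a,R^j)$, summed against an environment that depends on $a$ and $b$ through \emph{all other} hyperedges incident to the two $v_0$-copies. This is a permutation comparison, not a meet/join comparison, and the environment is not a function on $\{0,1\}^{\alpha_0}$ at all --- it only sees the single coordinate $v_0$. So packaging it as four functions on the lattice $\{0,1\}^{\alpha_0}$ does not go through as stated. What \emph{does} work (and is the mechanism behind both Ruozzi's proof and the paper's $2$-lift argument) is to organize the induction so that at each step one compares a full ``sorted'' object to an unsorted one, using iterated log-supermodularity $\prod_j\psi(x^j)\le\prod_j\psi(\hat x^j)$ on coordinatewise-sorted tuples together with the fact that marginalizing a log-supermodular function keeps it log-supermodular; the induction is then on vertices (or on hyperedges with all their incident permutations handled at once), not on single transpositions.
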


If we choose all $\psi_{u,v}=A_{\mathrm{Is}(\beta)}$ for all $(u,v)\in E(G)$ and $\beta>0$ then it immediately implies that for any $k$-lift $H$ of $G$ we have
\begin{align}
Z(H,A_{\mathrm{Is}(\beta)})\leq Z(G,A_{\mathrm{Is}(\beta)})^k
\end{align}
With a little trick Ruozzi was also able to use his theorem to prove that for a bipartite graph $G$ and its $k$-lift $H$ we have
\begin{align}
Z(H,A_{\mathrm{ind}})\leq Z(G,A_{\mathrm{ind}})^k
\end{align}
Finally, for the Potts-model partition function he proved in \cite{Ruo2} that for any graph $G$ and its $k$-lift $H$ we have
\begin{align}
Z(H,q,w)\leq Z(G,q,w)^k
\end{align}
for $q\geq 1,w\geq 0$. Just like in our proof Ruozzi switched to the edge set of the graph $G$ to use a variant of his theorem. 
\medskip

Theorem~\ref{Ruo-gen} is very powerful, we will come back to its applications in Section~\ref{limit}. Here we give a variant of Ruozzi's theorem for $2$-lifts which does not require log-supermodular  factorization, only log-supermodularity and whose proof is simpler. Unfortunately, this proof only works for $2$-lifts.

\begin{Th} Suppose that $f:\{0,1\}^{V(G)}\to \mathbb{R}_{\geq 0}$ is log-supermodular function, and it is the partition function of the graphical model $G=(V,\mathcal{H})$. Then for any $2$-cover $H=(V_k,\mathcal{H}_k)$ of $G$ we have $Z(H)\leq Z(G)^2$.

\end{Th}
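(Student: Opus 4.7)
The plan is as follows. First, I identify $V_k = V \times \{0,1\}$ so that each configuration $y \in \{0,1\}^{V_k}$ corresponds to a pair $(x,x') \in \{0,1\}^V \times \{0,1\}^V$ via $x_v = y_{(v,0)}$, $x'_v = y_{(v,1)}$. With this identification, $Z(G \cup G) = Z(G)^2 = \sum_{(x,x')} f(x)f(x')$, whereas
\[
Z(H) = \sum_{(x,x')} W_H(x,x'), \qquad W_H(x,x') = \prod_{u \in V} \phi_u(x_u)\phi_u(x'_u) \prod_{\alpha \in \mathcal{H}} \psi_\alpha(\tilde x^\alpha)\,\psi_\alpha(\tilde x'^\alpha),
\]
where the vectors $\tilde x^\alpha, \tilde x'^\alpha$ on $\alpha$ are obtained by the mixing prescribed by the $2$-lift: for each $v \in \alpha$ there is a parameter $\sigma_v^\alpha \in \{0,1\}$ and the copy of $v$ lying in $\alpha_0$ takes the value $x_v$ or $x'_v$ according to $\sigma_v^\alpha$. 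Note that $W_H$ is invariant under the $\mathbb{Z}/2$ deck transformation $(x,x') \mapsto (x',x)$ of the cover $H \to G$.

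Second, I plan to decompose the sum by partial-swap orbits: pairs $(x,x')$ and $(\tilde x,\tilde x')$ belong to the same orbit iff $\{x_v, x'_v\} = \{\tilde x_v, \tilde x'_v\}$ for every $v \in V$, so every orbit is indexed by $(u,w) = (x \wedge x', x \vee x')$ with $u \le w$, and its $2^{|D|}$ elements (with $D = \{v: u_v < w_v\}$) are parameterized by $t \in \{0,1\}^D$. The vertex-weight parts $\prod_u \phi_u(x_u)\phi_u(x'_u)$ are constant along each orbit, so the inequality $Z(H) \le Z(G)^2$ reduces, orbit by orbit, to an inequality between the $\psi$-sums. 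A short computation shows that for each hyperedge $\alpha$ the factor $\psi_\alpha(\tilde x^\alpha(t))\psi_\alpha(\tilde x'^\alpha(t))$ depends on $t$ only through $t|_{D \cap \alpha} \oplus \sigma^\alpha|_{D \cap \alpha}$, so the orbit-wise comparison becomes
\[
\sum_{t \in \{0,1\}^D} \prod_\alpha \Psi_\alpha^{(u,w)}\!\bigl(t|_{D \cap \alpha} \oplus \sigma^\alpha|_{D \cap \alpha}\bigr) \;\le\; \sum_{t \in \{0,1\}^D} \prod_\alpha \Psi_\alpha^{(u,w)}\!\bigl(t|_{D \cap \alpha}\bigr),
\]
where $\Psi_\alpha^{(u,w)}$ is the orbit-restricted local factor for the trivial $2$-lift. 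The right-hand side, summed over orbits and restored vertex weights, is exactly $Z(G\cup G) = Z(G)^2$.

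Third, the main obstacle is establishing this orbit-wise inequality. The key input is that the function $t \mapsto \prod_\alpha \Psi_\alpha^{(u,w)}(t|_{D \cap \alpha})$ equals (up to the fixed $\phi$-prefactor) $f(x(t))f(x'(t))$, and is therefore log-supermodular on $\{0,1\}^D$: log-supermodularity of $f$ is preserved under freezing variables outside $D$ and under the product construction $(x(t), x'(t)) \in \{0,1\}^V \times \{0,1\}^V$. The orbit inequality then says that replacing a log-supermodular function $g(t)$ on $\{0,1\}^D$ by a version in which each factor is independently XOR-shifted can only decrease the sum; if the shifts $\sigma^\alpha$ happen to be globally consistent (which is the case, for instance, when $H \cong G \cup G$) one has exact equality by change of variables, but otherwise the "inconsistency cost" is non-negative. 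The hard part is formalizing this last step: I expect it to follow from Holley's inequality, or from an Ahlswede--Daykin argument on the sub-lattice $\{0,1\}^D$, exploiting that both the lattice and the shift parameters are binary. This binary matching is exactly the feature that singles out $2$-lifts from the general $k$-lifts treated by Ruozzi and explains why the assumption on $f$ alone (rather than on the factorization) suffices here.
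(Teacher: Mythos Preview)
Your orbit decomposition (steps one and two) is set up correctly, but step three is a genuine gap that cannot be closed along the lines you suggest. The orbit-wise inequality you need,
\[
\sum_{t\in\{0,1\}^D}\prod_\alpha \Psi_\alpha^{(u,w)}\bigl(t|_{D\cap\alpha}\oplus\sigma^\alpha|_{D\cap\alpha}\bigr)\ \le\ \sum_{t\in\{0,1\}^D}\prod_\alpha \Psi_\alpha^{(u,w)}\bigl(t|_{D\cap\alpha}\bigr),
\]
is \emph{not} a statement about the log-supermodular function $g(t)=\prod_\alpha\Psi_\alpha^{(u,w)}(t|_{D\cap\alpha})$ alone: the left side depends on the specific factorisation of $g$ into the $\Psi_\alpha$, and neither Holley's inequality nor the Ahlswede--Daykin four-functions theorem can see that factorisation. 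Concretely, take $V=\{1,2,3\}$, $\mathcal H=\{\alpha,\beta\}$ with $\alpha=\{1,2\}$, $\beta=\{1,2,3\}$, all $\phi_v\equiv 1$, $\psi_\alpha(00)=\psi_\alpha(01)=\psi_\alpha(11)=1$, $\psi_\alpha(10)=2$, and $\psi_\beta(x_1,x_2,x_3)=2$ if $x_1=x_2$ and $1$ otherwise. Then $f=\psi_\alpha\psi_\beta$ is log-supermodular on $\{0,1\}^3$ (one checks $f(00\cdot)f(11\cdot)=4\ge 2=f(01\cdot)f(10\cdot)$), yet for the $2$-lift with $\sigma^\alpha=(0,0)$ and $\sigma^\beta=(0,1,0)$ the orbit with $D=\{1,2\}$ contributes $18$ on the $H$-side against $12$ on the $G\cup G$-side, and globally $Z(H)=220>196=Z(G)^2$. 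So the orbit-wise inequality, and in fact the theorem as stated, fails.

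For comparison, the paper does \emph{not} argue orbit by orbit; it fixes a vertex $u$, writes $Z(H)=\sum_{i,j}Z_{ij}(H)$, handles $Z_{00},Z_{11}$ by induction on $|V|$, and controls the cross terms via $Z_{01}(H)Z_{10}(H)\le Z_{00}(H)Z_{11}(H)$, justified by ``marginals of log-supermodular functions are log-supermodular''. But this last step is applied to the \emph{lifted} weight $f_H$ on $\{0,1\}^{V(H)}$, and $f_H$ need not be log-supermodular when only $f$ is: in the same example, with $u=1$, one computes $Z_{01}(H)^2=3600>2304=Z_{00}(H)Z_{11}(H)$. Thus the paper's argument shares the same underlying gap; both proofs go through if one reinstates Ruozzi's stronger hypothesis that each factor $\psi_\alpha$ is log-supermodular (since then $f_H$, being a product of log-supermodular functions of subsets of the coordinates, is log-supermodular), but not from log-supermodularity of $f$ alone.
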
 

\begin{proof} Let $u$ be a vertex of $G$, and let $u'$ be its pair in the lifts $G\cup G$ and $H$. For $i,j\in \{0,1\}^2$ consider the following quantities:
$$Z_{ij}(G\cup G)=\sum_{\underline{x}\in \{0,1\}^n \atop x_u=i, x_{u'}=j}f(G\cup G,x) \ \ \ Z_{ij}(H)=\sum_{\underline{x}\in \{0,1\}^n \atop x_u=i, x_{u'}=j}f(H,x).$$
Note that $Z_{00}(G\cup G),Z_{11}(G\cup G),Z_{00}(H),Z_{11}(H)$ can be considered as the partition function of $2$-lifts of $G-u$ by simply replacing those $\alpha$ which contains $u$ by $\alpha-u$ and defining $\psi_{\alpha-u}(\underline{x}_{\alpha-u})$ to be $\psi_{\alpha}(\underline{x}_{\alpha})$, where $x_u$ is replaced by $i$ according to which $Z_{ii}$ we consider. By induction we get that
$$Z_{00}(G\cup G)\geq Z_{00}(H)\ \ \ \mbox{and}\ \ \  Z_{11}(G\cup G)\geq Z_{11}(H).$$
Note that
$$Z_{01}(G\cup G)=Z_{10}(G\cup G)=\sqrt{Z_{00}(G\cup G)Z_{11}(G\cup G)}.$$
On the other hand, we have
$$Z_{01}(H)=Z_{10}(H)\leq \sqrt{Z_{00}(H)Z_{11}(H)}.$$
This is true since if $g$ is a log-supermodular function on $\{0,1\}^n$ then for any $k\leq n$ the function $h:\{0,1\}^k\to \mathbb{R}$ defined as follows 
$$h(\underline{y})=\sum_{\underline{x}\in \{0,1\}^{n-k}}g(\underline{y},\underline{x})$$
is also log-supermodular. Hence
$$Z_{01}(H)Z_{10}(H)\leq Z_{00}(H)Z_{11}(H).$$
Now 
\begin{align}
Z(H)&=Z_{00}(H)+Z_{11}(H)+Z_{01}(H)+Z_{10}(H) \\
&\leq Z_{00}(H)+Z_{11}(H)+2\sqrt{Z_{00}(H)Z_{11}(H)} \\
&\leq Z_{00}(G\cup G)+Z_{11}(G\cup G)+2\sqrt{Z_{00}(G\cup G)Z_{11}(G\cup G)}\\
&=Z_{00}(G\cup G)+Z_{11}(G\cup G)+Z_{01}(G\cup G)+Z_{10}(G\cup G)\\
&=Z(G\cup G).
\end{align}
Since $Z(G\cup G)=Z(G)^2$ we are done.

\end{proof}

\begin{Rem} N. Ruozzi informed us (personal communication) that his result  (and also the above theorem) implies many results in this paper, in particular Theorem~\ref{pos}, since many homomorphism functions can be reduced to a log-supermodular function with a clever trick. We do not detail this trick since N. Ruozzi may wish to publish his idea in a forthcoming paper.

\end{Rem}

\section{Zhao's and Sernau's ideas} \label{other work}

In this section we relate our work with some previous work, most notably due to Y. Zhao and L. Sernau.

\subsection{Bipartite swapping target graphs}

In this section we clarify what is the connection between our results and the so called bipartite swapping target graphs introduced by Y. Zhao \cite{Zhao2}. The definition we use for bipartite swapping target graphs is actually Proposition 4.6 in \cite{Zhao2}.

\begin{Def} From a graph $H$ let us define the the graph $H^{bst}$ as follows:
$V(H^{bst})=V(H)\times V(H)$, and there is an edge between $(u,v)$ and $(u',v')\in V(H^{bst})$ if $(u,u')\in E(H)$, $(v,v')\in  V(H^{bst})$, and $(u',v)\notin E(H)$ or  $(u,v')\notin E(H)$. Then we say that $H$ is a \emph{bipartite swapping target graph} if $H^{bst}$ is bipartite.
\end{Def}

Y. Zhao \cite{Zhao2} showed that for a bipartite swapping target graph $H$ we have
\begin{align}
\hom(G,H)^2\leq \hom(G\times K_2,H)
\end{align}
for any graph $G$. It is very natural to define the sibling of this concept.

\begin{Def} From a graph $H$ let us define the the graph $H^{abst}$ as follows:
$V(H^{abst})=V(H)\times V(H)$, and there is an edge between $(u,v)$ and $(u',v')\in V(H^{abst})$ if $(u,v')\in E(H)$, $(u',v)\in  V(H^{abst})$, and $(u,u')\notin E(H)$ or  $(v,v')\notin E(H)$. Then we say that $H$ is a \emph{adjoint bipartite swapping target graph} if $H^{abst}$ is bipartite.
\end{Def}

It is not hard to modify Y. Zhao's argument to show that for an adjoint  bipartite swapping target graph $H$ we have
\begin{align}
\hom(G,H)^2\geq \hom(G\times K_2,H)
\end{align}
for any graph $G$.
\medskip

Now let $A$ be the adjacency matrix of $H$, this is a $0-1$ matrix as we assume that $H$ is a simple graph possibly with loops. The condition for $E(H^{bst})$ saying that 
"$(u,u')\in E(H)$, $(v,v')\in  V(H^{bst})$, and $(u',v)\notin E(H)$ or  $(u,v')\notin E(H)$" means that $A^=$ contains a $1$ at the entry $((u,v),(u',v'))$, but it is $0$ in $A^{\times}$ at the same entry. Similarly, the condition for $E(H^{abst})$ saying that  "$(u,v')\in E(H)$, $(u',v)\in  V(H^{abst})$, and $(u,u')\notin E(H)$ or  $(v,v')\notin E(H)$" means that $A^{\times}$ contains a $1$ at the entry $((u,v),(u',v'))$, but it is $0$ in $A^{=}$ at the same entry. In other words, for the matrix $D=\frac{1}{2}\left(A^=-A^{\times}\right)$ we have $D((u,v),(u',v'))=1$ if and only if $((u,v),(u',v'))\in E(H^{bst})$, and $D((u,v),(u',v'))=-1$ if and only if $((u,v),(u',v'))\in E(H^{abst})$.

Observe that the "diagonal" vertices $(u,u)$ are isolated vertices in both $H^{bst}$ and $H^{abst}$. 

This means that the graphs covered by part (b) of Theorem~\ref{gen} are all bipartite swapping target graphs, and the graphs covered by part (a) of Theorem~\ref{gen} are all adjoint bipartite swapping target graphs. Of course, since we also covered matrices not just graphs our result is slightly more general in the sense that it is not clear how to interpret the corresponding concept "bipartite swapping target matrix". Unfortunately, there are graphs which are bipartite swapping target graphs, but are not covered by part (b) of Theorem~\ref{gen}. Note that if $A$ is bipartite graph then it is a bipartite swapping target graph, and the corresponding result is trivial:
\begin{align}
\hom(G,A)^2\leq \hom(G\times K_2,A)
\end{align}
since if $G$ is not bipartite then $\hom(G,A)=0$, and if $G$ is bipartite then $G\cup G=G\times K_2$ and consequently 
\begin{align}
\hom(G,A)^2=\hom(G\cup G,A)=\hom(G\times K_2,A).
\end{align}
On the other hand, an inequality of type
\begin{align}
\hom(G,A)^2\geq \hom(H,A)
\end{align}
for any bipartite graph $G$ and its $2$-lift $H$ would be very non-trivial statement. One might naively hope that a simple modification of Y. Zhao's proof works in this more general setting, but it is not true. His proof and also our proof of Theorem~\ref{gen} is based on the idea that if we have a homomorphism of $H$ to a graph $A$ then if we consider the pairs projected to every vertices of the original graph $G$ then we can lift it back to get a homomorphism of $G\times K_2$ (or $G\cup G$). Unfortunately, this is not always true even if $A$ is bipartite. Let $G=C_4$, the cycle on $4$ vertices. Then $G\cup G=G\times K_2=C_4\cup C_4$. Let $H=C_8$. Let us consider the following $3$-coloring which can be considered as a homomorphism into $C_6$. There is no proper lift of it to $C_4\cup C_4$, but it can be lifted to $H=C_8$.

\begin{figure}[h!]
\scalebox{.45}{\includegraphics{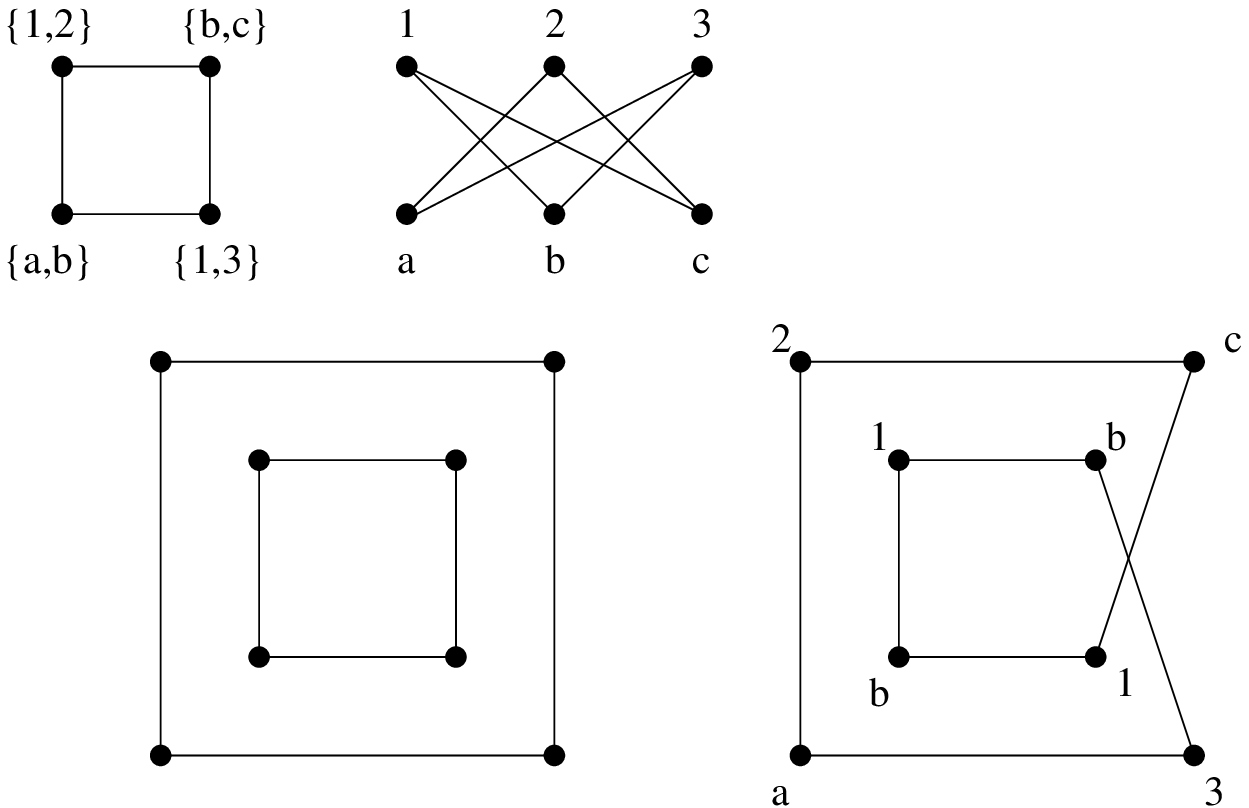}}  
\end{figure}

Naturally, $\hom(C_8,C_6)<\hom(C_4\cup C_4,C_6)$, but there does not seem to be a natural injection from the set of homomorphisms from $C_8$ to $C_6$ to the set of homomorphisms from $C_4\cup C_4$ to $C_6$.

\subsection{Sernau's ideas} L. Sernau \cite{S} introduced a series of ideas to prove inequalities of type 
\begin{align}
\hom(G,A)^2\leq \hom(G\times K_2,A).
\end{align}
These ideas were based on various graph transformations. Here we list some of them.

\begin{itemize}
\item Tensor product $G\times H$: its vertices are $V(G)\times V(H)$, with $(u,v)$ and $(u',v')\in V(G)\times V(H)$ adjacent in $G\times H$ if $(u,u')\in E(G)$ and $(v,v')\in E(H)$. This construction is also called categorical product. (If $A_1$ and $A_2$ are matrices then we keep the notation $A_1\otimes A_2$ in spite of the fact that two concepts are completely analogous: if $A(G)$ denotes the adjacency matrix of $G$, then $A(G\times H)=A(G)\otimes A(H)$.)

\item Exponentiation $H^G$: its vertices are the maps $f:V(G)\to V(H)$ (not necessarily homomorphisms), and $(f,f')\in E(H^G)$ if and only if $(f(u),f(u'))\in E(H)$ whenever $(u,u')\in E(G)$.

\item $G^{\circ}$ is the graph obtained from $G$ by adding a loop at each vertex of $G$.

\item $\ell(G)$ is the induced subgraph of $G$ induced by those vertices which have a loop.

\item $\mathrm{Sub}(G)$ is the subdivision of $G$: we subdivide each edge by a new vertex. So the obtained graph is a bipartite graph with $|V(G)|+|E(G)|$ vertices. 

\end{itemize}

The following identities are easy, but very useful.
\begin{equation} \label{tensor}
\hom(G,H_1\times H_2)=\hom(G,H_1)\hom(G,H_2).
\end{equation}
\begin{equation} \label{tensor2}
Z(G,A_1\otimes A_2)=Z(G,A_1)Z(G,A_2).
\end{equation}
\begin{equation} \label{exp}
\hom(G\times G',H)=\hom(G,H^{G'}).
\end{equation}
\begin{equation} \label{loop}
\hom(G^{\circ},H)=\hom(G,\ell(H)).
\end{equation}
\begin{equation} \label{sub}
Z(\mathrm{Sub}(G),A)=Z(G,A^2).
\end{equation}

\begin{Th}
(i) If $A_1,A_2\in \mathcal{A}$ (resp. $\mathcal{A}^b$ or $\mathcal{B}$) then $A_1\times A_2\in \mathcal{A}$ (resp. $\mathcal{A}^b$ or $\mathcal{B}$). \\
(ii) If $F\in \mathcal{A}^b$ and $B$ is bipartite then $F^B,\ell(F^B)\in \mathcal{A}$. \\
(iii) If $A\in \mathcal{A}^b$ then $A^2\in \mathcal{A}$.
\end{Th}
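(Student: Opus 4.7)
The plan is to prove each part by using the displayed identities (\ref{tensor2}), (\ref{exp}), (\ref{loop}) and (\ref{sub}) to convert a $2$-lift inequality for the new matrix/graph into a $2$-lift inequality for the original. In each case the only thing to verify is that the natural lift of $H$ is a bona fide $2$-lift of the transformed graph, and that whatever bipartiteness hypothesis is needed is automatically available.

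For part (i), the identity (\ref{tensor2}) gives $Z(G, A_1 \otimes A_2) = Z(G, A_1)\, Z(G, A_2)$ for every graph $G$. Thus if $A_1, A_2 \in \mathcal{A}$ and $H$ is any $2$-lift of $G$,
\begin{align*}
Z(H, A_1 \otimes A_2) = Z(H, A_1)\, Z(H, A_2) \leq Z(G \cup G, A_1)\, Z(G \cup G, A_2) = Z(G \cup G, A_1 \otimes A_2),
\end{align*}
so $A_1 \otimes A_2 \in \mathcal{A}$. The cases $\mathcal{A}^b$ (restrict to bipartite $G$) and $\mathcal{B}$ (replace $G \cup G$ by $G \times K_2$) follow by the same one-line computation.

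For part (ii), I first observe that if $H$ is a $2$-lift of $G$, then $H \times B$ is a $2$-lift of $G \times B$: the ``$=$'' vs.\ ``$\times$'' choice made on each edge $(u, v) \in E(G)$ inside $H$ induces the corresponding consistent choice on every edge of $G \times B$ projecting to $(u, v)$. Since $B$ is bipartite, so is $G \times B$, and the hypothesis $F \in \mathcal{A}^b$ applies; combined with (\ref{exp}) and the identity $(G \cup G) \times B = (G \times B) \cup (G \times B)$ we obtain
\begin{align*}
Z(H, F^B) = Z(H \times B, F) \leq Z\!\left((G \times B) \cup (G \times B), F\right) = Z(G \cup G, F^B),
\end{align*}
so $F^B \in \mathcal{A}$. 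For $\ell(F^B)$, I would apply (\ref{loop}) and the just-proved $F^B \in \mathcal{A}$: writing $H^{\circ}$ for $H$ with a loop at every vertex, $H^{\circ}$ is a $2$-lift of $G^{\circ}$ (loops lift to loops, not to a swap edge) and $(G \cup G)^{\circ} = G^{\circ} \cup G^{\circ}$, hence
\begin{align*}
Z(H, \ell(F^B)) = Z(H^{\circ}, F^B) \leq Z(G^{\circ} \cup G^{\circ}, F^B) = Z(G \cup G, \ell(F^B)).
\end{align*}

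For part (iii), the key point is that $\mathrm{Sub}(G)$ is always bipartite (with $V(G)$ and $E(G)$ as the two colour classes), and $\mathrm{Sub}(H)$ is a $2$-lift of $\mathrm{Sub}(G)$: each edge $e = (u, v) \in E(G)$ acquires two subdivision vertices in $\mathrm{Sub}(H)$ (one for each lifted copy in $H$), and after identifying them with $\{(e, 0), (e, 1)\}$ the four new incidences translate into consistent lift choices on the edges $(u, e)$ and $(v, e)$ of $\mathrm{Sub}(G)$. Using $A \in \mathcal{A}^b$, the identity (\ref{sub}) and $\mathrm{Sub}(G \cup G) = \mathrm{Sub}(G) \cup \mathrm{Sub}(G)$, we get
\begin{align*}
Z(H, A^2) = Z(\mathrm{Sub}(H), A) \leq Z(\mathrm{Sub}(G) \cup \mathrm{Sub}(G), A) = Z(G \cup G, A^2).
\end{align*}

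The only real obstacle is the routine bookkeeping needed to confirm that $H \times B$, $H^{\circ}$, and $\mathrm{Sub}(H)$ are genuinely $2$-lifts of $G \times B$, $G^{\circ}$, and $\mathrm{Sub}(G)$ respectively. None of the three verifications is deep, but in each one has to match up the ``$=$'' vs.\ ``$\times$'' lift data carefully, and in the $H^{\circ}$ case to remember that a loop admits two distinct $2$-lifts and we must select the one that preserves loops. Once this bookkeeping is settled, each item reduces to a short chain of equalities and a single invocation of the assumed $2$-lift inequality for the original matrix.
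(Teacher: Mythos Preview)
Your proof is correct and follows essentially the same route as the paper's: each part reduces the desired $2$-lift inequality for the new target to a $2$-lift inequality for the original via the identities (\ref{tensor2}), (\ref{exp}), (\ref{loop}), (\ref{sub}), together with the bookkeeping that $H\times B$, $H^{\circ}$, and $\mathrm{Sub}(H)$ are $2$-lifts of $G\times B$, $G^{\circ}$, and $\mathrm{Sub}(G)$, and that the relevant base graph is bipartite when needed.

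The one place you deviate slightly is the $\ell(F^B)$ case. You invoke the just-proved $F^B\in\mathcal{A}$ on the looped graph $G^{\circ}$ and its $2$-lift $H^{\circ}$; the paper instead unwinds one more step and applies $F\in\mathcal{A}^b$ directly to the bipartite graph $G^{\circ}\times B$ and its $2$-lift $H^{\circ}\times B$. Your shortcut is fine, but it quietly assumes that the membership $F^B\in\mathcal{A}$ you established also covers graphs with loops (and that $2$-lifts of looped graphs are handled as you describe). Since your argument for $F^B\in\mathcal{A}$ only uses that $G\times B$ is bipartite---which remains true when $G$ has loops---this is legitimate; the paper's version simply makes the dependence on $F\in\mathcal{A}^b$ explicit and sidesteps any worry about loops in the domain graph.
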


\begin{proof} Part (i) immediately follows from \ref{tensor} and \ref{tensor2}. 

To prove part (ii) we need the following observations: $(G\cup G)\times B=(G\times B)\cup (G\times B)$, where $G\times B$ is bipartite since $B$ is bipartite, and if $H$ is a $2$-lift of $G$ then $H\times B$ is a $2$-lift of $G\times B$.
\begin{align}
\hom(G\cup G,F^B) &=\hom((G\cup G)\times B,F)=\\
&=\hom((G\times B)\cup (G\times B),F)\geq \\
&\geq \hom(H\times B,F)=\hom(H,F^B).
\end{align}
Similarly, 
\begin{align}
\hom(G\cup G,\ell(F^B)) &=\hom((G\cup G)^{\circ},F^B)=\\
&=\hom((G\cup G)^{\circ}\times B,F)= \\
&=\hom(G^{\circ}\times B\cup G^{\circ}\times B,F)\geq \\
&\geq \hom(H^{\circ}\times B,F)=\\
&=\hom(H^{\circ},F^B)=\\
&=\hom(H,\ell(F^B)).
\end{align}

To prove part (iii) we need the following very easy observations: we have 
$\mathrm{Sub}(G\cup G)=\mathrm{Sub}(G)\cup \mathrm{Sub}(G)$,
and if $H$ is a $2$-lift of $G$ then $\mathrm{Sub}(H)$ is $2$-lift of $\mathrm{Sub}(G)$. Hence
if $A\in \mathcal{A}^b$ then
\begin{align}
Z(G\cup G,A^2)=Z(\mathrm{Sub}(G\cup G),A)=Z(\mathrm{Sub}(G)\cup \mathrm{Sub}(G),A)\geq Z(\mathrm{Sub}(H),A)=Z(H,A^2).
\end{align}
At the inequality we used the fact that $\mathrm{Sub}(G)$ is a bipartite graph.

\end{proof}

We know that $A_{\mathrm{ind}}\in \mathcal{B}\subseteq \mathcal{A}^b$ so by (ii) we have $\ell(A_{\mathrm{ind}}^{K_2})\in \mathcal{A}$. As it was pointed out in \cite{CCPT,S} we have $\ell(A_{\mathrm{ind}}^{K_2})=A_{\mathrm{WR}}$, so Theorem~\ref{Th-B} implies Theorem~\ref{Th-A}. We also remark that $A=A_{\mathrm{ind}}\times A_{\mathrm{ind}}$ is not a $TP_2$ or $TN_2$ matrix so this matrix was not covered by any of the previous theorems. 
\bigskip

L. Sernau \cite{S} also showed $d$-regular graphs and target graph $A$ for which
\begin{align}
\hom(G,A)^{1/v(G)}>\max(\hom(K_{d+1},A)^{1/v(K_{d+1})},\hom(K_{d,d},A)^{1/v(K_{d,d})})
\end{align}
if $d>3$. (One can extend his counter examples to the case $d=3$ too.)
An analogous problem is the following: is it true that for any graph $A$ and a $2$-lift $H$ of $G$ we have
\begin{align}
\hom(H,A)\leq \max(\hom(G\cup G,A),\hom(G\times K_2,A))?
\end{align}
It turns out that the answer is  negative. On the other hand, we do not know whether the answer is negative if $G$ is bipartite.

\section{The limit value and Sidorenko's conjecture} \label{limit}

In this section we give a very very brief account into the work of A. Dembo, A. Montanari, A. Sly and N. Sun \cite{DM,DM2,DMS,DMSS,SlSu,SlSu2} and the work of P. Vontobel \cite{Von}.
\medskip

Following Vontobel \cite{Von}, let us consider the following quantity for a graph $G$.
\begin{align}
\Phi_B(G;A,\nu)=\lim_{k\to \infty}\frac{1}{kv(G)}\ln \mathbb{E} Z(G^k,A,\nu),
\end{align}
where the expectation is taken for all $k$-covers of the graph $G$. This is called the normalized Bethe-partition function of the graph $G$.
We have seen that Ruozzi proved (cf. \cite{Ruo,Ruo2}) that in certain cases we have
\begin{align}
Z(G,A,\nu)^k\geq Z(G^k,A,\nu)
\end{align}
for all $k$-lifts of the graph $G$. This immediately implies that 
\begin{align}
\frac{1}{v(G)}\ln Z(G,A,\nu)\geq \Phi_B(G;A,\nu)
\end{align}
P. Vontobel proved in \cite{Von} that $\Phi_B(G;A,\nu)$ can be defined through an optimization problem on the so-called local marginal polytope of $G$. 
The local marginal polytope $\mathcal{T}(G)$ is defined as follows. In what follows $\tau_{u,v}$ is a probability distribution on $[q]^2$ for every $(u,v)\in E(G)$ and  $\tau_{u}$ is a probability distribution on $[q]$ for every $u\in V(G)$.
\begin{align}
\mathcal{T}(G)=\{\tau\geq 0\ |\ \forall (u,v)\in E(G):\ \sum_{j\in [q]}\tau_{u,v}(i,j)=\tau_u(i)\ \mbox{and}\ \forall u\in V(G):\  \sum_{i\in [q]}\tau_u(i)=1\}
\end{align}
For $\tau\in \mathcal{T}(G)$ let
\begin{align}
\Phi_B(G,\tau;A,\nu)=\frac{1}{v(G)}(U(G,\tau;A,\nu)-H(G,\tau;A,\nu)),
\end{align}
where
\begin{align}
U(G,,\tau;A,\nu)=\sum_{u\in V(G)}\sum_{i\in [q]}\tau_u(i)\ln \nu(i)+\sum_{(u,v)\in E(G)}\sum_{(i,j)\in {q}\times [q]}\tau_{u,v}(i,j)\ln a_{i,j}
\end{align}
and
\begin{align}
H(G,\tau;A,\nu)=\sum_{u\in V(G)}\sum_{i\in [q]}\tau_u(i)\ln \tau_u(i)+\sum_{(u,v)\in E(G)}\sum_{(i,j)\in {q}\times [q]}\tau_{u,v}(i,j)\ln \frac{\tau_{u,v}(i,j)}{\tau_u(i)\tau_v(j)}.
\end{align}
Finally, let
\begin{align}
\Phi_B(G;A,\nu)=\max_{\tau\in \mathcal{T}(G)}\Phi_B(G,\tau;A,\nu)
\end{align}

Now let us consider what happens if we suppose that $\tau_{u,v}$ distribution coincides with some distribution $h$ for all edge $(u,v)$ of a $d$--regular graph $G$.

Let $\mathcal{H}$ be the set of probability distributions on the pairs $(i,j)$, where $i,j\in [q]$ such that $h(i,j)=h(j,i)$ for all $i,j\in [q]$. Let $\bar{h}$ be the one-point marginal of $h$. Let us  fix a non-negative symmetric matrix $A$ and a positive weighting $\nu$. In the rest of this section we assume that $A$ is permissive, i. e., there exists an $i\in [q]$ such that $a_{ij}>0$ for all $j\in [q]$.

Let
\begin{equation}
\Phi_d(A,\nu; h)=\sum_{i\in [q]}\bar{h}(i)\ln \nu(i)-(d-1)H(\bar{h})+\frac{d}{2}\left(H(h)+\sum_{i,j\in [q]}h(i,j)\ln a_{i,j}\right),
\end{equation}
where $H$ is the entropy function: $H(p)=-\sum_{i\in [q]}p_i\ln (p_i)$. If $a_{ij}=0$ then $h(i,j)\ln a_{ij}=0$ if $h(i,j)=0$, and otherwise it is $-\infty$.

Let
\begin{equation} \label{def1}
\Phi_d(A,\nu)=\sup_{h\in \mathcal{H}}\Phi(A,\nu; h).
\end{equation}

Alternatively, one can define $\Phi(A,\nu)$ through the Belief Propagation and Bethe prediction (see Definition 1.3 and 1.5 in \cite{SlSu2}) as follows. For a probability distribution $\widetilde{h}$ on $[q]$ one can define the following probability distribution:
\begin{equation} \label{BP}
\mathrm{BP}\widetilde{h}(i)=\frac{1}{z_{\widetilde{h}}}\nu(i)\left(\sum_{j\in [q]}a_{ij}\widetilde{h}(j)\right)^{d-1}.
\end{equation} 
Let $\mathcal{H}^*$ be the set of BP fixed points. For a probability distribution $\widetilde{h}$ on $[q]$ let
\begin{equation}
\widetilde{\Phi}_d(A,\nu; \widetilde{h})=\ln \left(\sum_{i\in [q]}\nu(i)\left(\sum_{j \in [q]}a_{ij}\widetilde{h}(j)\right)^d\right)-\frac{d}{2}\ln \left(\sum_{i,j\in [q]}a_{ij}\widetilde{h}(i)\widetilde{h}(j)\right).
\end{equation}
Then
\begin{equation}  \label{def2}
\Phi_d(A,\nu)=\sup_{\widetilde{h}\in \mathcal{H}^*}\widetilde{\Phi}(A,\nu; \tilde{h}).
\end{equation}
The connection between the two definitions, \ref{def1} and \ref{def2}, of $\Phi(A,\nu)$ is the following. If $h$ maximizes $\Phi(A,\nu; h)$ then 
\begin{equation}
h(i,j)=\frac{1}{\widetilde{S}}a_{ij}\widetilde{h}(i)\widetilde{h}(j),
\end{equation}
for some $\widetilde{h}(i)\in \mathcal{H}^*$, and normalizing constant $\widetilde{S}$. This way the two definitions lead to the same value $\Phi_d(A,\nu)$, for details see Proposition 1.7 in \cite{DMSS} or Theorem 1.18 in \cite{DMS}.
\bigskip

A. Dembo, A. Montanari, A. Sly and N. Sun \cite{DMSS} proved that if we take a random $d$--regular graph $G_n$ on $n$ vertices then
\begin{equation}
\lim_{n\to \infty}\frac{1}{n}\ln \mathbb{E}_nZ(G_n,A,\nu)=\Phi_d(A,\nu).
\end{equation}
This can be considered as a special case of Vontobel's result applied to one vertex graph with $d$ loops, since the $n$ lifts of this graph are exactly the $d$--regular graphs.

From the above discussion it is clear that for every $d$--regular graph $G$ we have
\begin{align}
\Phi_B(G;A,\nu)\geq \Phi_d(A,\nu)
\end{align}
In particular, if we consider the graph $K_2(d)$ consisting of two vertices and $d$ parallel edges between them then 
\begin{align}
\Phi_B^b(A,\nu):=\Phi_B(K_2(d);A,\nu)\geq \Phi_d(A,\nu)
\end{align}
Note that the $n$-lifts of $K_2(d)$ are the $d$--regular bipartite graphs on $2n$ vertices.
Let us define
\begin{equation}
\phi_d(A,\nu)=\liminf_{n\to \infty}\frac{1}{n}\mathbb{E}_n\ln Z(G_n,A,\nu).
\end{equation}
By Jensen's inequality we have
\begin{equation}
\mathbb{E}_n\ln Z(G_n,A,\nu)\leq \ln \mathbb{E}_nZ(G_n,A,\nu)
\end{equation}
implying that
\begin{equation}
\phi_d(A,\nu)\leq \Phi_d(A,\nu).
\end{equation}
Finally, let
\begin{equation}
\phi_d^m(A,\nu)=\inf_{G\in \mathcal{G}_d} \frac{1}{v(G)}\ln Z(G,A,\nu),
\end{equation}
and 
\begin{equation}
\phi^{b,m}_d(A,\nu)=\inf_{G\in \mathcal{G}^b_d} \frac{1}{v(G)}\ln Z(G,A,\nu),
\end{equation}
Clearly,
\begin{equation}
\phi^m_d(A,\nu)\leq \phi_d(A,\nu)\leq \Phi_d(A,\nu)
\end{equation}
 and 
\begin{equation}
\phi^{b,m}_d(A,\nu)\leq \Phi^b_d(A,\nu).
\end{equation}
It is known that it can occur that $\phi_d(A,\nu)<\Phi_d(A,\nu)$. For instance for $A=A_{\mathrm{ind}}$ and $\nu_{\lambda}=(1,\lambda)$ we get that
$\phi_d(A_{\mathrm{ind}},\nu_{\lambda})<\Phi_d(A_{\mathrm{ind}},\nu_{\lambda})$ if $\lambda>\frac{(d-1)^{d-1}}{(d-2)^d}$. The surprising fact that in many cases it is still true that $\phi^m_d(A,\nu)=\phi_d(A,\nu)=\Phi_d(A,\nu)$ or $\phi^{b,m}_d(A,\nu)=\Phi_d(A,\nu)$.

For instance, it was proved by A. Dembo and A. Montanari \cite{DM,DM2} that if $(G_i)$ is a sequence of $d$--regular graphs with $g(G_i)\to \infty$, $A=A_{\mathrm{Is}(\beta)}$, $\nu_B=(e^B,e^{-B})$ then we have
\begin{equation}
\lim_{i\to \infty}\frac{1}{v(G_i)}\ln Z(G,A_{\mathrm{Is}(\beta)},\nu_B)=\Phi_d(A_{\mathrm{Is}(\beta)},\nu_B)
\end{equation}
if $\beta\geq 0$. Combining it with Theorem~\ref{pos} we immediately get that $\phi^m_d(A_{\mathrm{Is}(\beta)},\nu_B)=\Phi_d(A_{\mathrm{Is}(\beta)},\nu_B)$.
N. Sun and A. Sly \cite{SlSu,SlSu2} also proved that if $(G_i)$ is a sequence of $d$--regular bipartite graphs with $g(G_i)\to \infty$ then we have
\begin{equation}
\lim_{i\to \infty}\frac{1}{v(G_i)}\ln Z(G,A_{\mathrm{Is}(\beta)},\nu_B)=\Phi(A_{\mathrm{Is}(\beta)},\nu_B)
\end{equation}
even if $\beta<0$. Combining it with Theorem~\ref{neg} this shows that for all $\beta$ we have $\phi^{b,m}_d(A_{\mathrm{Is}(\beta)},\nu_B)=\Phi_d(A_{\mathrm{Is}(\beta)},\nu_B)$. In the same paper N. Sun and A. Sly \cite{SlSu,SlSu2} also proved that $(G_i)$ is a sequence of $d$--regular bipartite graphs with $g(G_i)\to \infty$, $A=A_{\mathrm{ind}}$, $\nu_{\lambda}=(1,\lambda)$ then we have
\begin{equation}
\lim_{i\to \infty}\frac{1}{v(G_i)}\ln Z(G,A_{\mathrm{ind}},\nu_{\lambda})=\Phi(A_{\mathrm{ind}},\nu_{\lambda})
\end{equation}
for all $\lambda\geq 0$. Combining it with Theorem~\ref{neg} this again shows that $\phi^{b,m}_d(A_{\mathrm{ind}},\nu_{\lambda})=\Phi_d(A_{\mathrm{ind}},\nu_{\lambda})$ for all $\lambda\geq 0$.

\subsection{Sidorenko's conjecture}

Sidorenko's conjecture states that for a bipartite graph $G$ and a graph $A$ on $q$ vertices we have
\begin{equation}
\hom(G,A)=q^{v(G)}\left(\frac{\hom(K_2,A)}{q^2}\right)^{e(G)}.
\end{equation}
Clearly, the natural weighted version for a pair $(A,\nu)$ is 
\begin{equation}
Z(G,A,\nu)\geq \left(\sum_{i\in [q]}\nu(i)\right)^{v(G)}\left(\frac{\sum_{i,j}\nu(i)\nu(j)a_{ij}}{(\sum_{i\in [q]}\nu(i))^2}\right)^{e(G)}.
\end{equation}
If $G$ is $d$--regular this is equivalent with the inequality
\begin{equation}
\frac{1}{v(G)}\ln Z(G,A,\nu)\geq \ln \left(\sum_{i\in [q]}\nu(i)\right)+\frac{d}{2}\ln \left(\frac{\sum_{i,j}\nu(i)\nu(j)a_{ij}}{(\sum_{i\in [q]}\nu(i))^2}\right).
\end{equation}
Let
\begin{equation}
S_d(A,\nu)=\ln  \left(\sum_{i\in [q]}\nu(i)\right)+\frac{d}{2}\ln \left(\frac{\sum_{i,j}\nu(i)\nu(j)a_{ij}}{(\sum_{i\in [q]}\nu(i))^2}\right).
\end{equation}
One can check that
\begin{equation}
\Phi_d(A,\nu)\geq S_d(A,\nu).
\end{equation}
Indeed, let
\begin{equation}
h(i,j)=\frac{\nu(i)\nu(j)a_{ij}}{S},
\end{equation}
where
\begin{equation}
S=\sum_{i,j}\nu(i)\nu(j)a_{ij}.
\end{equation}
For any $j\in [q]$ let
\begin{equation}
\tilde{\nu}(j)=\frac{\nu(j)}{\sum_{i\in [q]}\nu(i)}.
\end{equation}
Then
\begin{equation}
\Phi_d(A,\nu)\geq \Phi_d(A,\nu;h)=S_d(A,\nu)+(d-1)D(\bar{h} || \tilde{\nu})\geq S_d(A,\nu),
\end{equation}
where $D(p||q)=\sum_{i}p(i)\ln \frac{p(i)}{q(i)}$, the Kullback--Leibler distance of probability distributions $p$ and $q$, this is always a non-negative quantity.

\subsection{Case study: the number of independent sets.} \label{case:ind}
As before let $A=A_{\mathrm{ind}}$ and $\nu=(1,\lambda)$. In this case only distributions $h$ with $h_{22}=0$ can maximize $\Phi(A,\nu; h)$. A natural parametrization is $h_{12}=h_{21}=\alpha$ and $h_{11}=1-2\alpha$. Then
$\bar{h}_1=1-\alpha$ and $\bar{h}_2=\alpha$. A small computation shows that the maximizing $\alpha$ satisfies 
\begin{equation}
\frac{\alpha}{\lambda (1-\alpha)}=\left(\frac{1-2\alpha}{1-\alpha}\right)^d,
\end{equation}
and with this $\alpha$ we have
\begin{equation}
\Phi_{\lambda}=\frac{1}{2}\ln \left(\frac{\lambda (1-\alpha)^{d-1}}{\alpha}\right)=\frac{1}{2}\ln \left( \frac{(1-\alpha)^{2(d-1)}}{(1-2\alpha)^d}\right).
\end{equation}
Hence combining it with Theorem~\ref{Th-B} we have the following theorem. This theorem also follows from a result of the paper \cite{Ruo}.

\begin{Th} For any $\lambda\geq 0$ let $\alpha$ be the unique solution of
\begin{align}
\frac{\alpha}{\lambda (1-\alpha)}=\left(\frac{1-2\alpha}{1-\alpha}\right)^d
\end{align}
in the interval $[0,1/2]$.
Let $G$ be a $d$--regular bipartite graph $G$. Let $I(G,\lambda)=\sum_ki_k(G)\lambda^k$, where $i_k(G)$ denotes the number of independent sets of size $k$ in the graph $G$.
Then we have
\begin{align}
I(G,\lambda)\geq \left(\frac{\lambda (1-\alpha)^{d-1}}{\alpha}\right)^{v(G)/2}.
\end{align}
\end{Th}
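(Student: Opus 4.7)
The plan is to combine the $2$-lift monotonicity developed in Sections~\ref{posneg}--\ref{weights} with the limit computation sketched in Subsection~\ref{case:ind}. First, observe that $I(G,\lambda) = Z(G,A_{\mathrm{ind}},\nu_\lambda)$ with $\nu_\lambda = (1,\lambda)$, and that $A_{\mathrm{ind}}$ is a non-negative symmetric $TN_2$-matrix. Hence by Theorem~\ref{gen-weight}(b), for any graph $G$ and any $2$-lift $H$ we have $Z(H,A_{\mathrm{ind}},\nu_\lambda) \leq Z(G \times K_2, A_{\mathrm{ind}}, \nu_\lambda)$. When $G$ is bipartite one has $G \times K_2 = G \cup G$, so
$$I(H,\lambda) \leq Z(G \cup G, A_{\mathrm{ind}}, \nu_\lambda) = I(G,\lambda)^2,$$
and since $v(H)=2v(G)$ this yields $I(H,\lambda)^{1/v(H)} \leq I(G,\lambda)^{1/v(G)}$.

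Next I would invoke Linial's construction \cite{nati}, recalled in Subsection~\ref{egt}: from any $d$-regular bipartite graph $G$ one produces a tower $G = G_0, G_1, G_2, \dots$ where each $G_{i+1}$ is a $2$-lift of $G_i$, every $G_i$ is $d$-regular bipartite, and $g(G_i) \to \infty$. Iterating the inequality just proved gives
$$I(G,\lambda)^{1/v(G)} \geq I(G_i,\lambda)^{1/v(G_i)} \qquad \text{for all } i \geq 0.$$
It therefore suffices to identify $\liminf_{i\to\infty} \frac{1}{v(G_i)}\ln I(G_i,\lambda)$ and show it is at least $\frac{1}{2}\ln\!\left(\frac{\lambda(1-\alpha)^{d-1}}{\alpha}\right)$.

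For this step I would appeal to the result of Sly--Sun recalled in Section~\ref{limit}: for any sequence of $d$-regular bipartite graphs with girth tending to infinity and any $\lambda \geq 0$,
$$\lim_{i\to\infty} \frac{1}{v(G_i)} \ln Z(G_i, A_{\mathrm{ind}}, \nu_\lambda) = \Phi_d(A_{\mathrm{ind}}, \nu_\lambda).$$
It remains to evaluate $\Phi_d(A_{\mathrm{ind}},\nu_\lambda)$ via its variational definition~\eqref{def1}. Since $a_{22}=0$, any maximizing $h \in \mathcal{H}$ must satisfy $h(2,2)=0$; I would then parametrize $h(1,1)=1-2\alpha$ and $h(1,2)=h(2,1)=\alpha$ with $\alpha \in [0,1/2]$, so that $\bar h(1)=1-\alpha$, $\bar h(2)=\alpha$. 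Substituting into $\Phi_d(A_{\mathrm{ind}},\nu_\lambda;h)$ and setting $\partial/\partial\alpha$ to zero yields the fixed-point equation
$$\frac{\alpha}{\lambda(1-\alpha)}=\left(\frac{1-2\alpha}{1-\alpha}\right)^d,$$
with unique solution in $[0,1/2]$ (the left-hand side is increasing and the right-hand side decreasing in $\alpha$). Back-substitution gives $\Phi_d(A_{\mathrm{ind}},\nu_\lambda) = \frac{1}{2}\ln\!\left(\frac{\lambda(1-\alpha)^{d-1}}{\alpha}\right)$, as required.

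The main obstacle is not combinatorial: the $2$-lift step, the Linial tower, and the Lagrange-multiplier computation of $\Phi_d$ are all routine once Theorems~\ref{neg} and \ref{gen-weight} are available. The substantive input is the Sly--Sun limit theorem, which is a deep result that must be invoked as a black box; the rest of the proof is essentially assembly.
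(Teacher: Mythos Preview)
Your proposal is correct and follows essentially the same route as the paper: the paper derives this theorem by combining Theorem~\ref{Th-B} (the $2$-lift monotonicity for independent sets in bipartite graphs, proved via Theorem~\ref{independent} and the Linial tower in Theorem~\ref{general}) with the Sly--Sun limit and the variational computation of $\Phi_d(A_{\mathrm{ind}},\nu_\lambda)$ carried out in Subsection~\ref{case:ind}. The only cosmetic difference is that you cite Theorem~\ref{gen-weight}(b) for the weighted $2$-lift inequality, whereas the paper could equally well use the coefficientwise inequality $i_k(H)\leq i_k(G\times K_2)$ from Theorem~\ref{independent}; both are available in the paper and yield the same conclusion.
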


\subsection{Case study: Ising-model.} \label{case:Ising} Let us consider the weighted case when $\nu_B(1)=e^B$ and $\nu_B(-1)=e^{-B}$. (Note that we use the labels $1,-1$ instead of $1,2$ for the vertices of the target graph (matrix).) In other words, 
\begin{equation}
Z(G,A_{\mathrm{Is(\beta)}},\nu_B)=\sum_{\underline{x}\in \{-1,1\}^{V(G)}}\exp \left( \beta \sum_{(u,v)\in E(G)}x_ux_v+B\sum_{u\in V(G)}x_u\right).
\end{equation}
It turns out (see \cite{DM,DM2}) that when $\beta\geq 0$ and $(G_i)$ is sequence of $d$--regular graphs such that $g(G_i)\to \infty$ then 
\begin{equation}
\lim_{i\to \infty} \frac{1}{v(G_i)}\ln Z(G_n,A_{\mathrm{Is(\beta)}},\nu_B)=\varphi_d(\beta,B),
\end{equation}
where
\begin{align}
  \varphi_d(\beta,B)=\varphi_d(\beta,B,h^*)=
\end{align}
\begin{align}
=\frac{d}{2}\left(-\frac{1}{2}\ln(1-\theta^2)-\ln(1+\theta \tanh^2(h^*))\right)+\ln \left(e^B(1+\theta \tanh(h^*))^d+e^{-B}(1-\theta \tanh(h^*))^d\right),
\end{align}
where $\theta=\tanh(\beta)$ and $h^*$ is the largest solution of the equation
\begin{equation}
h=B+(d-1)\mathrm{atanh}(\theta \tanh(h)).
\end{equation}
Note that this formula is valid even if $B=0$. Combining it with our Theorem~\ref{gen} we get that for any graph $G$ we have
\begin{equation}
\frac{1}{v(G)}\ln Z(G,A_{\mathrm{Is(\beta)}},\nu_B)\geq\varphi_d(\beta,B).
\end{equation}
It might be more enlightening just to write out Sidorenko's inequality in this case:
\begin{equation}
Z(G,A_{\mathrm{Is(\beta)}},\nu_B)\geq \left(e^B+e^{-B}\right)^{v(G)}\left(\frac{e^{\beta}\left(e^{2B}+e^{-2B}\right)+2e^{-\beta}}{\left(e^B+e^{-B}\right)^2}\right)^{e(G)}.
\end{equation}

\subsection{Case study: Potts-model and Tutte-polynomial.} \label{case:Tutte}

In this case it is again true that if $(G_i)$ is sequence of $d$--regular graphs such that $g(G_i)\to \infty$ then 
\begin{equation}
\lim_{i\to \infty} \frac{1}{v(G_i)}\ln Z(G_n,A_q(w))
\end{equation}
exists when $q$ is an integer, $w\geq 0$ and $d$ is even, see  \cite{DMSS,DMS}. Let us mention that the conjectured proper limes infimum is already established in \cite{DMS}, and of course, it is enough for the applications.
 
Instead of giving the exact form of the limit we note that Sidorenko's conjecture is trivial in the case $q\geq 1,w\geq 0$. Indeed, it asserts that
\begin{equation}
Z(G,q,w)\geq q^{v(G)}\left(1+\frac{w}{q}\right)^{e(G)}.
\end{equation}
Note that for any subset $F\subseteq E(G)$ we have $k(F)\geq v(G)-|F|$ by induction on $|F|$. Hence
\begin{equation}
Z(G,q,w)=\sum_{F\subseteq E(G)}q^{k(F)}w^{|F|}\geq \sum_{F\subseteq E(G)}q^{v(G)-|F|}w^{|F|}=q^{v(G)}\left(1+\frac{w}{q}\right)^{e(G)}.
\end{equation}
Clearly, this means that if $G$ is $d$--regular graph then $Z(G,q,w)^{1/v(G)}\geq q \left(1+\frac{w}{q}\right)^{d/2}$. Note that there is another trivial lower bound for $Z(G,q,w)$:
\begin{equation}
Z(G,q,w)=\sum_{F\subseteq E(G)}q^{k(F)}w^{|F|}\geq \sum_{F\subseteq E(G)}w^{|F|}=(1+w)^{e(G)}.
\end{equation}
This shows that for a $d$--regular graph $G$ we have $Z(G,q,w)^{1/v(G)}\geq \left(1+w\right)^{d/2}$.

\subsection{Non-regular graphs and Benjamini--Schramm convergence.} \label{BS-conv}

Since in the applications of $2$-lifts we never used the regularity of the graph, it is possible to use the ideas of this paper for non-regular graphs. For matchings of non-regular graphs such a program was carried out M. Lelarge \cite{Le}. Note that it is still possible to construct for every graph $G$ a sequence of graphs $(G_i)$ such that $G_0=G$, $G_i$ is a $2$-lift of $G_{i-1}$, and $g(G_i)\to \infty$. Then it is a natural question whether there is a limit object in this case too like $\mathbb{T}_d$. The answer is yes: it is the universal cover tree of $G$, more precisely the universal cover tree with the uniform distribution of the lifts of the vertices of the original graph as a root. To make this statement precise we recall the definition of Benjamini--Schramm convergence and random rooted graphs (unimodular random graphs). 

\begin{Def} \label{BS-convergence} Let $L$ be a probability distribution on (infinite) connected rooted graphs; we will call $L$ a \emph{random rooted graph}.
For a finite connected rooted graph $\alpha$ and a positive integer $r$, let $\mathbb{P}(L,\alpha,r)$ be the probability that the $r$-ball
centered at a random root vertex chosen from the distribution $L$ is isomorphic to $\alpha$.

For a finite graph $G$, a finite connected rooted graph $\alpha$ and a positive integer
$r$, let $\mathbb{P}(G,\alpha,r)$ be the probability that the $r$-ball
centered at a uniform random vertex of $G$ is isomorphic to $\alpha$. 

We say that a bounded-degree graph sequence $(G_i)$ is \emph{Benjamini--Schramm
convergent} if for all finite rooted graphs $\alpha$ and $r>0$, the
probabilities $\mathbb{P}(G_i,\alpha,r)$ converge. Furthermore, we say that \emph{$(G_i)$ Benjamini-Schramm converges to $L$},
if for all positive integers $r$ and finite rooted graphs $\alpha$, $\mathbb{P}(G_i,\alpha,r)\rightarrow \mathbb{P}(L,\alpha,r)$.

The Benjamini--Schramm convergence is also called \emph{local convergence} as it primarily grasps the local structure of the graphs $(G_i)$.
\end{Def}

Not every random rooted graph can be the limit of finite graphs. There is an extra condition called unimodularity, for details see \cite{LL}. From the definition of Benjamini--Schramm convergence it is quite straightforward to see that a sequence $(G_i)$ of lifts of $G$ with $g(G_i)\to \infty$ converges to the universal cover tree of $G$, see also \cite{Le}. Fortunately, in many notable cases A. Dembo, A. Montanari and N. Sun \cite{DMS} established the limit theorem even in the non-regular setting. In fact, they proved a much more general theorem covering sequences converging to unimodular random trees.

\section{Concluding remarks and open problems} \label{the end}

\subsection{Concluding remarks.} The goal of this section is to elaborate on an admittedly vague intuition of the author.  It seems that for many interesting graph parameters there is a local-global principle in the following sense. Inequalities between $2$-lifts and correlation inequality can predict the extremal regular graph. The simplest instance is of course that
\begin{align}
\hom(G\cup G,A)\leq \hom(G\times K_2,A)
\end{align}
implies
\begin{align}
\hom(G,A)^{1/v(G)}\leq  \hom(K_{d,d},A)^{1/v(K_{d,d})}
\end{align}
for every $d$--regular graph $G$. And we have seen that 
\begin{align}
\hom(G\cup G,A)\geq \hom(H,A)
\end{align}
implies that
\begin{align}
\hom(G,A)^{1/v(G)}\geq "\hom(\mathbb{T}_d,A)^{1/v(\mathbb{T}_d)}"
\end{align}
for every $d$--regular graph $G$. But the point is that the validity of the very same inequality coincides with the (sometimes only conjectured) inequality 
\begin{align}
\hom(G,A)^{1/v(G)}\leq  \hom(K_{d+1},A)^{1/v(K_{d+1})}
\end{align}
for every $d$--regular graph $G$. It might occur very easily that there is no direct connection between these inequalities, but both of them are governed by certain correlation inequalities. In Section~\ref{Tutte-polynomial} we have seen that the FKG-inequality implies a positive correlation for the ferromagnetic Potts-model which in turn implies an inequality for $2$-lifts. This is also a case where it is conjectured that $K_{d+1}$ is the maximizing graph and it is proved for $d=3$, see \cite{DJPR2}. Another example for this phenomenon is the case of Widom--Rowlinson configurations where both the inequality 
$\hom(G\cup G,A_{\mathrm{WR}})\geq \hom(H,A_{\mathrm{WR}})$, and the extremality of $K_{d+1}$ hold true. 
Here we can also observe a certain  positive correlation. For the independent sets we have negative correlation and inequality of type $\hom(H,A)\leq \hom(G\times K_2,A)$, and the latter implies that the extremality of $K_{d,d}$ holds true. A possible intuition which may explain these phenomenons is the following: positive correlation implies that short cycles increases the number of homomorphisms, and negative correlation implies that short odd cycles decreases and short even cycles increases the number of homomorphisms. The most beautiful manifestation of this phenomenon is again the number of independent sets: if we want to minimize them then we have to have a lot of triangles in the graph, this  suggests $K_{d+1}$ (true! see \cite{CR}).  if we want to maximize them then we have to have a lot of $4$-cycles, but no triangles in the graph, this suggests $K_{d,d}$ (true! see \cite{Kahn,Zhao1}). For bipartite graphs we can only only prohibit short cycles, because it won't contain odd cycles, this suggests $\mathbb{T}_d$ (true!, this paper). If we want to minimize the number of independent sets in triangle-free graphs we have to find a graph without triangles and $4$-cycles, but with many $5$-cyles. For $d=3$ a natural candidate is the Petersen-graph. This is exactly the result of Perarnau and  Perkins \cite{PP}. Finally, if we want to maximize the number of independent sets with girth at least $5$ then we have to find a graph without $4$ and $5$-cycles, but with many $6$-cyles. For $d=3$ a natural candidate is the Heawood--graph. This is exactly another result of Perarnau and  Perkins \cite{PP}.
The author would not be surprised that if we want to minimize the number of independent sets with constraint girth at least $6$  then the Coxeter--graph would be the minimizer, and if we want to maximize the number of independent sets with constraint girth at least $7$ then the Tutte--Coxeter--graph would be the maximizer.
\bigskip

In this paper we were primarily interested in graph homomorphisms, but one can study other graph parameters with this method, for instance, the number of spanning trees or forests or other evaluations of the Tutte-polynomial. For the number of spanning trees $\tau(G)$, B. Mckay \cite{mckay} proved that for a $d$--regular graph $G$ on $n$ vertices we have
\begin{align}
\tau(G)\leq \frac{c \ln n}{n}\left( \frac{(d-1)^{d-1}}{(d^2-2d)^{d/2-1}}\right)^n.
\end{align}
With a slight modification of the method this problem can be handled with $2$-lifts, but we will get a weaker subexponential term. One might try to prove that $\tau(H)\geq \tau(G)^2$ for a $2$-lift $H$ of $G$. Unfortunately, this is not true and one should prove instead that $\mathbb{E}\tau(H)\geq \tau(G)^2$, where $\mathbb{E}\tau(H)$ is the average for all $2$-lifts of $G$. Fortunately, it is enough to deduce McKay's result. (But we have to admit that it will neither give a simpler proof, nor a better result.)  Note that R. Lyons \cite{lyons} proved the corresponding graph limit theorem. The effect of short cycles is very explicit in the work of B. McKay, and a certain negative correlation inequality is known as the consequence of Rayleigh's principle. For the number of forests $F(G)$, the author conjectures  that $F(H)\geq F(G)^2$ holds true for every graph $G$ and its $2$-lift $H$. This would follow from a well-known conjecture about a negative correlation inequality for the number of forests. The Reader might have already noticed that in these cases the inequalities are in the opposite directions, and $\mathbb{T}_d$ is the maximizing graph for the number of spanning trees. This is strongly related with certain phase transition for the Potts-model at $q=1$. 
\bigskip

{\footnotesize{
\begin{center}
\begin{tabular}{|l|cr|cr|cr|cr|} \hline
 & \multicolumn{4}{|c|}{All graphs} & \multicolumn{4}{|c|}{Bipartite graphs} \\ \hline
Graph parameter $P(G)$ & \multicolumn{2}{|c|}{Supremum} & \multicolumn{2}{|c|}{Infimum} & \multicolumn{2}{|c|}{Supremum} & \multicolumn{2}{|c|}{Infimum} \\ \hline
$\hom(G,H)$ (fixed $H$)& & & & & $K_{d,d}$ & \cite{GT}&  &\\ \hline
Number of independent sets  ($\mathcal{B}$)& $K_{d,d}$ & \cite{Zhao1}, $\mathcal{B}'$: \cite{Zhao2}& $K_{d+1}$ & \cite{CR} & $K_{d,d}$ & \cite{Kahn}& $\mathbb{T}^b_d$ & ($\mathcal{B}$) \\ \hline
Number of $q$-colorings & $K_{d,d}$ & (conj.) & $K_{d+1}$ & \cite{Zhao3}&  $K_{d,d}$ & \cite{GT}& $\mathbb{T}^b_d$ & \cite{CL} \\ \hline
Widom-Rowlinson (class $\mathcal{A,C}$)& $K_{d+1}$ & ($\mathcal{C}$: \cite{CPT,CCPT,S}) & $\mathbb{T}_d$ & ($\mathcal{A}$)& $K_{d,d}$ & \cite{GT}& $\mathbb{T}^b_d$ & ($\mathcal{A}$)\\ \hline
Number of perfect matchings & $K_{d,d}$ & \cite{B}& 0 & & $K_{d,d}$ & \cite{B}& $\mathbb{T}^b_d$ & \cite{Sc}\\ \hline
Number of all matchings & $K_{d,d}$ & \cite{DJPR}& & & $K_{d,d}$ & \cite{DJPR}& $\mathbb{T}^b_d$ & \cite{G,csi1}\\ \hline
Number of spanning trees & $\mathbb{T}_d$ & \cite{mckay}& & & $\mathbb{T}^b_d$ & \cite{mckay}& & \\ \hline
Number of forests & $\mathbb{T}_d$ & (conj.) & $K_{d+1}$ & (conj.) & $\mathbb{T}^b_d$ & (conj.) & &\\ \hline
Number of acyclic orientations & $\mathbb{T}_d$ & (conj.) & $K_{d+1}$ & \cite{GVS} & $\mathbb{T}^b_d$ &  (conj.) & &\\ \hline
\end{tabular}
\end{center}
\bigskip
}}

\subsection{Open problems.} There are two open problems which naturally arise in the study of $2$-lifts and large girth graphs.

\begin{?} \label{2-lift_question}
Is it true that for any bipartite graph $G$ and its $2$-lift $H$, and target graph (matrix) $A$ we have
\begin{align}
Z(G\cup G,A)\geq Z(H,A)?
\end{align}
In other words, is it true that every non-negative symmetric matrix is in $\mathcal{A}^b$?
\end{?}

\begin{?} \label{limit_question}
Is it true that if $(G_i)$ is a sequence of $d$--regular bipartite graphs such that $g(G_i)\to \infty$ then
\begin{align}
\lim_{i\to \infty}Z(G_i,A)^{1/v(G_i)}
\end{align}
exists?
\end{?}

Note that Problem~\ref{limit_question} has many natural variants. One can ask whether it is true that if a sequence of bipartite graphs $(G_i)$  converges to a random unimodular tree then 
\begin{align}
\lim_{i\to \infty}Z(G_i,A)^{1/v(G_i)}
\end{align}
exists or not. Or an even more optimistic question that if a sequence of bipartite graphs $(G_i)$  converges to a random unimodular graph (so not necessarily to a tree) then 
\begin{align}
\lim_{i\to \infty}Z(G_i,A)^{1/v(G_i)}
\end{align}
exists or not.
\bigskip

\noindent \textbf{Acknowledgment.} We are very grateful to the following people for discussions on topics related to this paper: Mikl\'os Ab\'ert, Emma Cohen, Jonathan Cutler, David Galvin, Yang Liu, Will Perkins, Nicholas Ruozzi, Luke Sernau, Prasad Tetali, and Yufei Zhao.
\bigskip

\bibliographystyle{siamnodash}

\bibliography{all_bibliography}

\end{document}